\newtheorem{theorem}{Theorem}[section]
\newtheorem{lem}[theorem]{Lemma}
\newtheorem{exa}[theorem]{Example}
\newtheorem{kor}[theorem]{Corollary}
\newtheorem{prop}[theorem]{Proposition}
\theoremstyle{definition}
\newtheorem{dfn}[theorem]{Definition}
\newtheorem{rem}[theorem]{Remark}
\definecolor{orange}{rgb}{1.0, 0.55, 0.0}
\DeclareMathOperator*{\supp}{supp}
\DeclareMathOperator*{\divv}{div}
\DeclareMathOperator*{\Id}{Id}
\DeclareMathOperator*{\loc}{loc}
\DeclareMathOperator*{\comp}{comp}
\DeclareMathOperator*{\dist}{dist}
\begin{document}
	\title{Flow selections for (nonlinear) Fokker--Planck--Kolmogorov equations}

	\author{Marco Rehmeier\footnote{Faculty of Mathematics, Bielefeld University, 33615 Bielefeld, Germany. E-Mail: mrehmeier@math.uni-bielefeld.de }}
	
	\date{}
	\maketitle
	\begin{abstract}
We provide a method to select flows of solutions to the Cauchy problem for linear and nonlinear Fokker--Planck--Kolmogorov equations (FPK equations) for measures on Euclidean space. In the linear case, our method improves similar results of a previous work of the author. Our consideration of flow selections for nonlinear equations, including the particularly interesting case of Nemytskii-type coefficients, seems to be new. We also characterize the (restricted) well-posedness of FPK equations by the uniqueness of such (restricted) flows. Moreover, we show that under suitable assumptions in the linear case such flows are Markovian, i.e. they fulfill the Chapman-Kolmogorov equations.
\end{abstract}

	\noindent	\textbf{Keywords:} Fokker–Planck equation, Cauchy problem, solution flow\\ \\
	\textbf{2020 MSC}: 60J60, 35Q84, 35K55
	
\section{Introduction}
We study linear and nonlinear Fokker--Planck--Kolmogorov equations (FPK equations), which are second-order parabolic equations for measures. For coefficients $a_{ij}$ and $b_i$, $1\leq i,j \leq d$, depending on $(t,\zeta,x)$, where $(t,x) \in \mathbb{R}_+\times \mathbb{R}^d$ and $\zeta \in \mathcal{SP}$ is a subprobability measure on $\mathbb{R}^d$, we consider the second-order operator (using Einstein summation convention)
\begin{equation}\label{Def_L_Intro}
	\mathcal{L}_{t,\zeta}\varphi(x) := a_{ij}(t,\zeta,x)\partial_{ij}\varphi(x)+b_{i}(t,\zeta,x)\partial_i\varphi(x), \quad \varphi \in C^2(\mathbb{R}^d).
\end{equation}
To this operator, we associate the following FPK equation for curves $t \mapsto \mu_t$ in $\mathcal{SP}$ in Schwartz distributional sense:
\begin{equation}\label{NLFPKE_Intro}
	\partial_t\mu_t = \mathcal{L}^*_{t,\mu_t}\mu_t,
\end{equation}
where $\mathcal{L}^*$ denotes the formal dual of $\mathcal{L}$. For $(s,\nu) \in \mathbb{R}_+\times \mathcal{SP}$, we consider \eqref{NLFPKE_Intro} with vague Cauchy-type initial condition
\begin{equation}\label{Initial-cond_Intro}
	\mu_s = \nu.
\end{equation}
Often, $a = (a_{ij})_{i,j \leq d}$ and $b= (b_i)_{i \leq d}$ are interpreted as diffusion and drift coefficients, respectively.
The nonlinearity of \eqref{NLFPKE_Intro} arises from the dependence of the coefficients on the solution $\mu_t$. An important example of such nonlinear dependence is the challenging case of \textit{Nemytskii-type} coefficients $a(t,\zeta,x) = \bar{a}(t,(d\zeta / dx)(x),x)$, $b(t,\zeta,x) = \bar{b}(t,(d\zeta / dx)(x),x)$ where $d\zeta / dx$ denotes the density of a measure $\zeta \ll dx$. In the case where $a$ and $b$ are independent of $\zeta$, \eqref{NLFPKE_Intro} is a linear equation for measures. 

Consideration of FPK equations dates back at least to works by Fokker \cite{Fokker14}, Planck \cite{Planck17} and Kolmogorov \cite{Kolmogoroff1,Kolmogoroff2}. In \cite{Kolmogoroff0}, Kolmogorov posed the question whether \eqref{NLFPKE_Intro}-\eqref{Initial-cond_Intro} has a solution $\mu^{s,x} = (\mu^{s,x}_t)_{t \geq 0}$ for each initial condition $(s,\delta_x)$ ($\delta_x$ denotes the Dirac measure in $x \in \mathbb{R}^d)$ such that the \textit{Chapman-Kolmogorov equations}
\begin{equation}\label{CK-eq}
	\mu^{s,x}_t = \int_{\mathbb{R}^d}\mu^{r,y}_td\mu^{s,x}_r(dy),\quad \forall 0\leq s \leq r \leq t,\, x \in \mathbb{R}^d
\end{equation}
hold. We come back to this question in Section 5.
Emerging from these pioneering works, FPK equations, at first primarily in the linear case, have become an intensively studied field in statistical physics, quantum mechanics and stochastic analysis. For an extensive account of (linear) FPK equations, we refer to the monograph \cite{FPKE-book15}.

Recently, the challenging case of nonlinear FPK equations has sparked particular interest from several directions, among them modeling of porous media, neurophysics, population dynamics and computational science \cite{Frank_NLFPKE-book}. If the coefficients are of Nemytskii-type and one identifies a solution $\mu_t$ with its density $\rho_t$, \eqref{NLFPKE_Intro} may be rewritten as the nonlinear PDE in weak sense
\begin{equation}\label{PDE_Intro}
	\partial_t \rho_t(x) = \partial_{ij}(a_{ij}(t,\rho_t(x),x)\rho_t(x))-\partial_i(b_i(t,\rho_t(x),x)\rho_t(x)).
\end{equation}
Choosing $a_{ij} = \delta_{ij}{a}$ for a suitable coefficient $a: \mathbb{R}_+\times\mathbb{R}\times \mathbb{R}^d \to \mathbb{R}$, \eqref{PDE_Intro} becomes a perturbed general Porous Media Equation, and interpreting \eqref{PDE_Intro} as a FPK equation provides an analytic tool to study such equations with completely degenerate measures as initial condition, see the early works \cite{BC79,Pierre82} for the case $b \equiv 0$, and \cite{BR18,NLFPK-DDSDE2,BR18_2,NLFPK-DDSDE5} for recent more general results.

The fruitful connection of (nonlinear) FPK equations to stochastic analysis stems from the equivalence of weak solutions to the (distribution-dependent) Itô stochastic differential equation (SDE) with coefficients $b$ and $\sigma$ and probability measure-valued solutions to \eqref{NLFPKE_Intro} with coefficients $b$ and $a = 1/2\sigma\sigma^T$, see \cite{Ambrosio08,Figalli09,Trevisan16} for linear and \cite{BR18,BR18_2} for nonlinear equations. As a matter of fact, if \eqref{NLFPKE_Intro}-\eqref{Initial-cond_Intro} is linear and well-posed, the solutions to the associated SDE constitute a Markov process. If solutions are not unique, it is an important question whether one can still select one solution for each initial condition such that the selected family forms a Markov process. As far as we know, the first positive answer in this direction is due to Krylov \cite{Krylov_1973}. In the spirit of this pioneering work, Stroock and Varadhan \cite[Ch.12]{StroockVaradh2007} proved existence of a Markov selection for ill-posed martingale problems for $a$ and $b$ bounded, continuous and time-independent. Recall that weak solutions to the SDE with coefficients $a$ and $\sigma$ are equivalent to solutions to the martingale problem (which are probability measures on path space) with coefficients $b$ and $a= 1/2\sigma \sigma^T$. Their selection method is based on the weak compactness of the set of solutions to the martingale problem for fixed initial condition. It is readily seen that the one-dimensional time marginals of such Markovian selections on paths pace fulfill the Chapman-Kolmogorov equations \ref{CK-eq}.

Despite the aforementioned equivalence of linear FPK equations and martingale problems, to the best of our knowledge, our previous work \cite{Rehmeier_Flow-JEE} is the first to consider selections on the level of the FPK equation. It turned out that at least for continuous and bounded coefficients for linear FPK equations, it is possible to select a family of solutions $\mu^{s,\nu}$, $(s,\nu) \in [0,T)\times \mathcal{SP}$, to \eqref{NLFPKE_Intro}-\eqref{Initial-cond_Intro}, which fulfills
\begin{equation}\label{Flow_intro}
	\mu^{s,\nu}_t = \mu^{r,\mu^{s,\nu}_r}_t \quad\forall 0\leq s < r< t < T.
\end{equation}
We call such a family a \textit{flow}.
This selection is based on the compactness of solutions to the FPK equation, which we obtained from the previously mentioned compactness for the associated martingale problem. This approach relied on the linearity of \eqref{NLFPKE_Intro} and seems not to be robust when passing to less restrictive assumptions on $a$ and $b$.

In the present paper, completely leaving the martingale problem out of the picture, we present a more general, purely analytic selection method for solutions to FPK equations with the flow property: first, we recover our results from \cite{Rehmeier_Flow-JEE} and extend them to equations with more general coefficients. Secondly, our present approach also covers nonlinear FPK equations, partially including the challenging case of Nemytskii-type coefficients. Thirdly, we refine the selection method such that it allows to select solutions from suitable subsets of solutions and and to restrict to subclasses of initial conditions.
Finally, we realized that at least under the assumptions of \cite{Rehmeier_Flow-JEE}, properties \eqref{Flow_intro} and \eqref{CK-eq} coincide. Our selection method is still in the spirit of \cite{Krylov_1973} and \cite{StroockVaradh2007}.

More precisely, our main results are Theorems \ref{Thm-1} and \ref{Thm-2}. The latter is a consequence of the proof of the former and characterizes well-posedness of \eqref{NLFPKE_Intro}-\eqref{Initial-cond_Intro} in terms of uniqueness of the selected flow. We obtained such a result already in \cite{Rehmeier_Flow-JEE}, and extend it here to our present more general situation. We formulate these results in a very general topological manner, aiming for a unified formulation, which applies in various situations of interest. Our main applications are Propositions \ref{Prop_entire-flow-SP}, \ref{Prop_entire_prob}, \ref{Lyap flow prop} in the linear case, and Propositions \ref{1_prop_NL_entire-SP}, \ref{1_prop_Application-CG19} and Corollary \ref{Cor_Nemytskii-case} in the nonlinear case. The latter result treats the case of Nemytskii-type drift coefficients $b$ and particularly applies under the assumptions of several recent existence results for nonlinear FPK equations, such as \cite{BR18_2,BR21_NLFP-time-dep,NLFPK-DDSDE5}. We make precise comparisons to these works at the end of Section 4.

Finally, we would like to mention again that the results of this paper can be considered from at least two perspectives: with regard to the theory of PDEs, we select semigroups of distributional solutions to equations of type \eqref{PDE_Intro}.  On the other hand (and more generally), regarding to the connection to stochastic equations, we prove that for \eqref{NLFPKE_Intro}-\eqref{Initial-cond_Intro}, one can select a solution flow, which seems to be a more general concept than the Chapman-Kolmogorov equations \eqref{CK-eq}, which are closely related to Markovian selections. Under certain assumptions on the coefficients of the equation, we show that these notions coincide.

The remainder of the paper is organized as follows. After introducing notation and the main definitions in Section 2, in Section 3 we formulate and prove both main results, Theorems \ref{Thm-1} and \ref{Thm-2}. Section 4 contains linear and nonlinear applications, i.e. here we give concrete sets of conditions for the coefficients under which our results apply. Precisely, Subsection \ref{Subsection_lin-appl} and \ref{Subsection_nonlin-appl} consist of applications to linear and nonlinear equations, respectively. In Section 5, we return to linear equations and prove that for continuous and bounded coefficients, properties \eqref{Flow_intro} and \eqref{CK-eq} coincide. Finally, Appendix A and B contain auxiliary lemmas and proofs, and basics on measurable selections, respectively.
\paragraph{Funding}
Financial support by the CRC 1283 (Bielefeld University) of the German Research Foundation is gratefully
acknowledged.

\section{Notation and definitions}
\subsection{Notation}
For topologies $\tau_1$ and $\tau_2$ on a set $\mathcal{H}$, we write $\tau_1 \subseteq \tau_2$ for the usual inclusion of topologies. On $\mathbb{R}^d$, we write $|\cdot|$, $B_R$ and $K\subset \subset \mathbb{R}^d$ for the Euclidean norm, the ball with radius $R>0$ and center $0,$ and a compact subset $K$, respectively.

A measure $\zeta$ (always nonnegative) on $\mathcal{B}(\mathbb{R}^d)$ is \textit{locally bounded}, if $\zeta(B_R)<\infty$ for all $R>0$, and a \textit{(sub)probability measure}, if $\zeta(\mathbb{R}^d)$ is (less or) equal to $1$. A curve $t\mapsto \zeta_t$ of locally bounded measures on $\mathcal{B}(\mathbb{R}^d)$ is a \textit{Borel curve}, if $t\mapsto \zeta_t(A)$ is Borel for each $A \subset \subset \mathbb{R}^d$. We denote the spaces of subprobability and probability measures on $\mathbb{R}^d$ with the usual vague and weak topologies of measures, $\tau_v$ and $\tau_w$, by $(\mathcal{SP},\tau_v)$ and $(\mathcal{P},\tau_w)$, respectively. We write $\mathcal{SP}_{\ll}$ and $\mathcal{P}_{\ll}$ for the measures $\zeta$ in $\mathcal{SP}$ and $\mathcal{P}$, respectively, which have a density $d\zeta / dx$ with respect to Lebesgue measure. More generally, for a metric space $X$, we let $\mathcal{P}(X)$ be the set of probability measures on $\mathcal{B}(X)$. A family of Borel functions $h: \mathbb{R}^d \to \mathbb{R}$ is \textit{measure separating}, if $\int h d\zeta_1= \int h d\zeta_2$ for all $h$ implies $\zeta_1=\zeta_2$ for finite Borel measures $\zeta_i$ on $\mathbb{R}^d$.

For a normed space $X$, a Borel measure $\mu$ and $p \in [1,\infty]$, $L^p_{(\loc)}(X;\mu)$ (shortly $L^p_{(\loc)}(X)$, if $X \subseteq \mathbb{R}^d$ and $\mu = dx$) are the usual spaces of (locally) $L^p$-integrable $\mathbb{R}$-valued Borel functions on $X$, with norms on the global spaces denoted by $||\cdot||_{L^p(X)}$. If $X=\mathbb{R}^d$, we write $||\cdot||_p$.
For $s < T$ and a topological space $\mathcal{H}$, $C_{s,T}\mathcal{H}$ denotes the space of continuous functions from $(s,T)$ to $\mathcal{H}$. The usual Sobolev spaces of functions on an open set $U \subseteq \mathbb{R}^d$ are denoted by $W^{m,p}(U)$, with norms $||\cdot||_{W^{m,p}(U)}$.
For $m \in \mathbb{N}_0\cup \{\infty\}$, $C^m_{(c)}(U)$ is the space of continuous functions on $U$ with (compact support and) continuous derivatives of order $m$. For $\gamma \in (0,1]$ and a metric space $X$, set $C_{(\loc)}^\gamma(U,X)$ for the space of (locally) bounded and $\gamma-$Hölder continuous functions from $U$ to $X$, and $C_{(\loc)}^\gamma(U)$, if $X=\mathbb{R}$. If $X$ is normed, $||\cdot||_{C^\gamma(I,X)}$ is the usual Hölder-norm, i.e.
$$||u||_{C^\gamma(U,X)} := \sup_{t \in U}||u(t)||_X+\sup_{t,r\in U}\frac{||u(t)-u(r)||_X}{|t-r|^\gamma}$$

Finally, we recall the definition of the parabolic Sobolev spaces $\mathcal{H}^{p,1}(U,I)$, $\mathbb{H}^{p,1}(U,I)$ from \cite[Ch.6]{FPKE-book15}: For $U \subseteq \mathbb{R}^d$ open and an interval $J \subseteq \mathbb{R}$, set
\begin{align*}
	\mathbb{H}^{p,1}(U,J) := \bigg\{u: J&\times U\to \mathbb{R} \text{ Borel, } u(t,\cdot) \in W^{1,p}(U)  \,dt-a.s.,\, \\&||u||_{\mathbb{H}^{p,1}(U,J)}:= \int_J||u(t,\cdot)||^p_{W^{1,p}(U) }< \infty \bigg\},
\end{align*}
and define $\mathbb{H}^{p,1}_0(U,J)$ similarly, but with $W^{1,p}_0(U)$ (the closure of $C^\infty_c(U)$ with respect to $||\cdot||_{W^{1,p}(U)}$) in place of $W^{1,p}(U)$. The dual of $\mathbb{H}^{p,1}_0(U,J)$ is denoted $\mathbb{H}^{p',-1}_0(U,J)$. Moreover, denote by $\mathcal{H}^{p,1}(U,J)$ the space of elements $u\in \mathbb{H}^{p,1}(U,J)$ with $\partial_t u \in \mathbb{H}^{p,-1}_0(U,J)$, with norm
$$||u||_{\mathcal{H}^{p,1}(U,J)}:= ||u||_{\mathbb{H}^{p,1}(U,J)}+||\partial_t u||_{\mathbb{H}^{p,-1}(U,J)}.$$
Note that compared with our notation for the spaces $W^{m,p}(U)$, we use the reversed order of superindices for $\mathbb{H}^{p,1}(U,J)$ and $\mathcal{H}^{p,1}(U,J)$ to be consistent with \cite{FPKE-book15}.

\subsection{Definitions: Nonlinear FPK equations and solution flows}
Fix $T>0$. Let $\mathcal{S}_0 \subseteq \mathcal{SP}$ and consider coefficients $a = (a_{ij})_{1\leq i,j\leq d}, b = (b_i)_{1 \leq i \leq d}$, 
\begin{equation*}
	a_{ij}, b_i: (0,T)\times \mathcal{S}_0 \times \mathbb{R}^d \to \mathbb{R},\quad 1\leq i,j \leq d.
\end{equation*} 
We always assume $a$ to be symmetric and nonnegative definite. For such coefficients, we consider the operator $\mathcal{L}_{t,\zeta}$ as in \eqref{Def_L_Intro}. Our main objective is the Cauchy problem \eqref{NLFPKE_Intro}-\eqref{Initial-cond_Intro}.

\begin{dfn}\label{Def_NLFPKE-sol}
	A curve of subprobability measures $\mu=(\mu_t)_{t \in (s,T)}$ such that $\mu_t \in \mathcal{S}_0$ $dt$-a.s. is a \textup{solution to the Cauchy problem \eqref{NLFPKE_Intro}-\eqref{Initial-cond_Intro}}, if $t \mapsto \mu_t$ is vaguely continuous, there is a $dt$-version $(\tilde{\mu}_t)_{t \in (s,T)}$ of $\mu$ such that the maps $(t,x) \mapsto a_{ij}(t,\tilde{\mu}_t,x)$ and $(t,x) \mapsto b_i(t,\tilde{\mu}_t,x)$ are Borel, and the following hold:
	\begin{enumerate}
		\item [(i)] $\int_{s}^{T}\int_{K}|a_{ij}(t,\tilde{\mu}_t,x)|+|b_i(t,\tilde{\mu}_t,x)|d\mu_t(x)dt < \infty, \quad \forall K \subset\subset \mathbb{R}^d, 1\leq i,j \leq d.$
		\item[(ii)] For all $\varphi \in C^2_c(\mathbb{R}^d)$ and $t \in (s,T)$
		\begin{equation}\label{Sol-eq_NLFPKE_cont}
			\int_{\mathbb{R}^d}\varphi \,d\mu_{t}- \int_{\mathbb{R}^d}\varphi\, d\nu = \int_{s}^{t}\int_{\mathbb{R}^d}\mathcal{L}_{\tau,\tilde{\mu}_\tau}\varphi \,d\mu_\tau d\tau.
		\end{equation}
	\end{enumerate}
	If $\{\nu,(\mu_t)_{t \in (s,T)}\} \subseteq \mathcal{P}$, we call $\mu$ a \textup{probability solution}.
\end{dfn} 

Probability solutions $t \mapsto \mu_t$ are weakly continuous on $[s,T)$. By \eqref{Sol-eq_NLFPKE_cont} and the global in time integrability (i), each solution extends to a vaguely continuous curve $(\mu_t)_{t \in [s,T]}$ with $\mu_s=\nu$. However, without global in space integrability, $\mu_t(\mathbb{R}^d)=1$ for each $t \in (s,T)$ does not imply $\mu_T(\mathbb{R}^d) = 1$. If $a$ and $b$ are $\mathcal{B}((0,T))\otimes \tau_v \otimes \mathcal{B}(\mathbb{R}^d)$-measurable, one can take $\tilde{\mu}=\mu$ in the above definition.

\begin{rem}\label{1_rem_NL_more-gen}
	\begin{enumerate}
		\item [(i)] It is not necessary to confine the notion of solutions to continuous curves of subprobability measures. More generally, a Borel curve $\mu= (\mu_t)_{t \in (s,T)}$ of locally bounded measures with $\mu_t \in \mathcal{S}_0$ $dt$-a.s. is considered a solution to \eqref{NLFPKE_Intro}-\eqref{Initial-cond_Intro}, provided there is a version $\tilde{\mu}$ of $\mu$ such that $(t,x)\mapsto a_{ij}(t,\tilde{\mu}_t,x)$ and $(t,x) \mapsto b_i(t,\tilde{\mu}_t,x)$ are Borel maps in $L^1_{\loc}((s,T)\times \mathbb{R}^d; \mu_tdt)$ such that for each $\varphi \in C^\infty_c(\mathbb{R}^d)$ there is a set $J_\varphi \subseteq (s,T)$ of full $dt$-measure with
		\begin{equation*}\label{Def_general-sol}
			\int_{\mathbb{R}^d}\varphi\,d\mu_{t}- \int_{\mathbb{R}^d}\varphi\,d\nu = \lim_{r \to s+}\int_{r}^{t}\int_{\mathbb{R}^d}\mathcal{L}_{\tau,\tilde{\mu}_\tau}\varphi\,d\mu_\tau d\tau,\quad \forall t \in J_\varphi,
		\end{equation*}
		compare with \cite[Prop.6.1.2.]{FPKE-book15} and Section 6.7.(iii) of the same reference. Clearly, this general notion coincides with the previous definition, if $(\mu_t)_{t \in (s,T)}$ is vaguely continuous and fulfills (i) of Definition \ref{Def_NLFPKE-sol}.
		Equivalently (see \cite[Prop.6.1.2]{FPKE-book15}), $(\mu_t)_{t \in (s,T)}$ is a solution to \eqref{NLFPKE_Intro}-\eqref{Initial-cond_Intro}, if
		\begin{equation}\label{Sol_time-space-formulation}
			\int_{(s,T)\times\mathbb{R}^d}\partial_t\varphi+\mathcal{L}_{t,\tilde{\mu}_t}\varphi \,d\mu_tdt = 0,\quad \forall \varphi \in C^\infty_c((s,T)\times \mathbb{R}^d),
		\end{equation}
		and for each such $\varphi $, there is $J_\varphi$ as above such that
		\begin{equation}\label{Sol_time-space-formulation_IC}
			\int_{\mathbb{R}^d} \varphi\,d\nu = \lim_{t\to 0+, t \in J_\varphi}\int_{\mathbb{R}^d} \varphi \,d\mu_t.
		\end{equation}
		
		\item[(ii)] It is important to note that solutions $\mu$ to \eqref{NLFPKE_Intro}-\eqref{Initial-cond_Intro} are in particular solutions to the \textup{linear} FPK equation with coefficients $(t,x)\mapsto a(t,\tilde{\mu}_t,x), b(t,\tilde{\mu}_t,x)$. This way, existence and uniqueness results for nonlinear equations are obtained via fixed point arguments \cite{CG19,MS14}.
	\end{enumerate}
\end{rem}
By the following lemma it is no restriction to confine to continuous solutions. We postpone its proof to the appendix.
\begin{lem}\label{Lem_cont-wlog}
	Let $(\mu_t)_{t \in (s,T)}$ be a Borel curve of locally bounded measures with $\mu_t \in \mathcal{S}_0 \subseteq \mathcal{SP}$ $dt$-a.s. such that $(t,x)\mapsto a(t,\tilde{\mu}_t,x)$ and $(t,x)\mapsto b(t,\tilde{\mu}_t,x)$ are Borel maps (for some $dt$-version $\tilde{\mu}$ of $\mu$) in $L^1\big((s,T)\times B_R;\mu_tdt\big)$ for all $R>0$. Assume that \eqref{Sol_time-space-formulation}-\eqref{Sol_time-space-formulation_IC} holds. Then, there exists a unique vaguely continuous version $(\bar{\mu}_t)_{t \in [s,T]}$ of $(\mu_t)_{t \in (s,T)}$, which solves \eqref{NLFPKE_Intro}-\eqref{Initial-cond_Intro} in the sense of Definition \eqref{Def_NLFPKE-sol}, and which is equal to $\nu$ at the initial time $s$. Moreover, for this version \eqref{Sol-eq_NLFPKE_cont} holds for all $t \in [s,T]$.
\end{lem}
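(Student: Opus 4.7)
The plan is to construct the continuous version $\bar{\mu}$ by identifying, for each test function, an absolutely continuous representative of the map $t \mapsto \int \varphi \, d\mu_t$, and then reassembling these into a family of measures via Riesz representation.

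First, for $\varphi \in C^\infty_c(\mathbb{R}^d)$, I would define
\[
G_\varphi(t) := \int_{\mathbb{R}^d}\varphi\,d\nu + \int_s^t\!\int_{\mathbb{R}^d}\mathcal{L}_{\tau,\tilde{\mu}_\tau}\varphi(x)\,d\mu_\tau(x)\,d\tau, \qquad t \in [s,T],
\]
which is well-defined and continuous (even absolutely continuous) in $t$ thanks to the local $L^1$-assumption on $a(\cdot,\tilde{\mu}_{\cdot},\cdot)$ and $b(\cdot,\tilde{\mu}_{\cdot},\cdot)$ and the compactness of $\supp\varphi$. To identify $G_\varphi$ with a version of $t \mapsto \int \varphi\,d\mu_t$, I would plug test functions of the form $\psi(t)\varphi(x)$ with $\psi \in C^\infty_c((s,T))$ into \eqref{Sol_time-space-formulation}; this shows that the distributional derivative of $t \mapsto \int \varphi\,d\mu_t$ is $t \mapsto \int \mathcal{L}_{t,\tilde{\mu}_t}\varphi\,d\mu_t$, hence on a full-measure set $J_\varphi \subset (s,T)$ we have $\int \varphi\,d\mu_t = G_\varphi(t) + c_\varphi$ for some constant $c_\varphi$. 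Combining this with \eqref{Sol_time-space-formulation_IC} along a sequence $t_n \in J_\varphi$ with $t_n \to s+$ forces $c_\varphi = 0$, so $G_\varphi(t) = \int \varphi\,d\mu_t$ on $J_\varphi$.

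Next, for each $t \in [s,T]$, I would show that $\varphi \mapsto G_\varphi(t)$ is a positive linear functional on $C^\infty_c(\mathbb{R}^d)$ with operator norm at most $1$: for $\varphi \geq 0$ the identity $G_\varphi(t) = \int \varphi\,d\mu_t$ on $J_\varphi$ yields $G_\varphi(t) \geq 0$ there, and similarly $|G_\varphi(t)| \le \|\varphi\|_\infty$ on $J_\varphi$; continuity of $G_\varphi$ then extends both bounds to all $t \in [s,T]$. By density of $C^\infty_c$ in $C_c(\mathbb{R}^d)$ and the Riesz--Markov theorem, one obtains a unique subprobability measure $\bar{\mu}_t$ with $\int\varphi\,d\bar{\mu}_t = G_\varphi(t)$ for every $\varphi \in C^\infty_c(\mathbb{R}^d)$. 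Vague continuity of $t \mapsto \bar{\mu}_t$ is then immediate from continuity of each $G_\varphi$ plus the density argument, and $\bar{\mu}_s = \nu$ holds by $G_\varphi(s) = \int \varphi\,d\nu$. Choosing a countable dense family $\{\varphi_n\} \subset C_c(\mathbb{R}^d)$ and setting $J := \bigcap_n J_{\varphi_n}$ (still of full measure), the equalities $\int \varphi_n\,d\bar{\mu}_t = \int \varphi_n\,d\mu_t$ on $J$ yield $\bar{\mu}_t = \mu_t$ for $dt$-a.e.\ $t$, so $\tilde{\mu}$ is simultaneously a $dt$-version of $\bar{\mu}$ and the coefficients $a(\cdot,\tilde{\mu}_\cdot,\cdot), b(\cdot,\tilde{\mu}_\cdot,\cdot)$ satisfy condition (i) of Definition \ref{Def_NLFPKE-sol} relative to $\bar\mu$ on every $K \subset\subset \mathbb{R}^d$.

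The solution identity \eqref{Sol-eq_NLFPKE_cont} for all $t \in [s,T]$ and $\varphi \in C^\infty_c(\mathbb{R}^d)$ is now exactly the definition of $G_\varphi(t)$, with $d\mu_\tau d\tau$ and $d\bar{\mu}_\tau d\tau$ interchangeable since $\bar{\mu} = \mu$ $dt$-a.s.; a standard approximation in $W^{2,\infty}$ extends it from $C^\infty_c$ to $C^2_c$. Uniqueness of the vaguely continuous version is clear: any two such versions of $\mu$ agree on a dense subset of $[s,T]$ and hence, by vague continuity, coincide on $[s,T]$. The only real subtlety I foresee is the careful coupling of \eqref{Sol_time-space-formulation} and \eqref{Sol_time-space-formulation_IC} to pin down the constant of integration in Step~2, which is the step that actually enforces the initial condition $\bar{\mu}_s = \nu$.
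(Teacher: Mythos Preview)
Your proposal is correct and follows essentially the same route as the paper's proof: both arguments use product test functions $\psi(t)\varphi(x)$ in \eqref{Sol_time-space-formulation} to identify $t\mapsto\int\varphi\,d\mu_t$ as a $W^{1,1}$-function with derivative $t\mapsto\int\mathcal{L}_{t,\tilde\mu_t}\varphi\,d\mu_t$, and then invoke Riesz--Markov--Kakutani to reconstruct subprobability measures from the resulting continuous functionals. The only notable presentational difference is that you write down the explicit absolutely continuous representative $G_\varphi(t)=\int\varphi\,d\nu+\int_s^t\int\mathcal{L}_\tau\varphi\,d\mu_\tau d\tau$ with the initial datum built in (and then use \eqref{Sol_time-space-formulation_IC} to kill the integration constant), whereas the paper first fixes a countable dense family $\mathcal{F}\subset C^\infty_c(\mathbb{R}^d)$, works on the common full-measure set $\mathcal{T}$ where all the abstract $W^{1,1}$-representatives agree with $\int\phi\,d\mu_t$, and only afterwards extends by density and appeals to \eqref{Sol_time-space-formulation_IC}. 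Your version is slightly more direct in that the initial condition and the identity \eqref{Sol-eq_NLFPKE_cont} on all of $[s,T]$ fall out immediately from the definition of $G_\varphi$; the paper's version makes the density-extension step a bit more explicit. Either way the substance is the same.
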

For the rest of the paper, we reserve the term \textit{solution} for curves as in Definition \ref{Def_NLFPKE-sol}. 
Finally, we remark that Definition \ref{Def_NLFPKE-sol} includes the case of subprobability solutions to \textit{linear} FPK equations. Indeed, for coefficients $a(t, \zeta, x) := a'(t,x)$ and $b(t,\zeta, x) := b'(t,x)$ (for Borel maps $a', b'$ on $(0,T)\times \mathbb{R}^d$), the linear case is retrieved by choosing $\mathcal{S}_0 = \mathcal{SP}$. 

\paragraph{Solution flows}\label{Subsect_Sol-flow}
We denote by $\mathcal{M}_{s,\nu}$ the set of solutions to \eqref{NLFPKE_Intro}-\eqref{Initial-cond_Intro} with initial condition $(s,\nu) \in [0,T)\times \mathcal{SP}$, and by $\mathcal{M}^1_{s,\nu}$ its subset of probability solutions, if $\nu \in \mathcal{P}$. As we are particularly interested in the singular case of so-called \textit{Nemytskii-type coefficients}, we further set $\mathcal{M}^{\ll}_{s,\nu} := \{(\mu_t)_{t \in (s,T)} \in \mathcal{M}_{s,\nu}: \mu_t \ll dx\, \forall t\}.$
\\

We want to select families $\{\mu^{s,\nu}\}_{(s,\nu) \in [0,T)\times \mathcal{SP}}$ with the flow property \eqref{Flow_intro} such that $\mu^{s,\nu} \in \mathcal{M}_{s,\nu}$. Often, we (need to) restrict to suitable subsets $\mathcal{A}_{s,\nu} \subseteq \mathcal{M}_{s,\nu}$:
\begin{dfn}\label{Def_flow-adm}
	\begin{enumerate}
		\item [(i)] A family $\{\mathcal{A}_{s,\nu}\}_{(s,\nu) \in [0,T)\times \mathcal{SP}}$ with $\mathcal{A}_{s,\nu} \subseteq \mathcal{M}_{s,\nu}$ is \textup{flow-admissible}, if it satisfies the following stability properties for any $0\leq s < r < T$ and $\nu \in \mathcal{SP}$:
		\begin{enumerate}
			\item [(a)] If $(\mu_t)_{t \in (s,T)} \in \mathcal{A}_{s,\nu}$, then $ (\mu_t)_{t \in (r,T)} \in \mathcal{A}_{r,\mu_r}$.
			\item [(b)] If $(\mu_t)_{t \in (s,T)} \in \mathcal{A}_{s,\nu}$ and $(\eta_t)_{t \in (r,T)} \in \mathcal{A}_{r,\mu_r}$, then $\mu \circ_r \eta \in \mathcal{A}_{s,\nu}$, where
			\begin{equation*}
				(\mu \circ_r \eta)_t := \begin{cases}
					\mu_t,& t \in (s,r] \\ \eta_t,& t \in (r,T).
				\end{cases}
			\end{equation*} 
		\end{enumerate}
		\item[(ii)] The set of initial data $\nu \in \mathcal{SP}$ with $\mathcal{A}_{s,\nu} \neq \emptyset$ is denoted by $A_s$, and we call pairs $(s,\nu)$ with $\nu \in A_s$ \textup{admissible}.
	\end{enumerate}
\end{dfn}
Note that a flow-admissible family consists of a set of solutions $\mathcal{A}_{s,\nu} \subseteq \mathcal{M}_{s,\nu}$ for each initial condition $(s,\nu)$, but $A_s$ can be a strict subset of $\mathcal{SP}$. If $A_0$ is non-empty, then each $A_s$ is non-empty by (a) of the previous definition.
\begin{dfn}\label{Def flow}
	\begin{enumerate}
		\item [(i)] 	Let $\{\mathcal{A}_{s,\nu}
		\}_{(s,\nu) \in [0,T)\times \mathcal{SP}}$ be flow-admissible. A \textup{flow} to \eqref{NLFPKE_Intro}-\eqref{Initial-cond_Intro} (with respect to $\{\mathcal{A}_{s,\nu}\}_{(s,\nu) \in [0,T)\times \mathcal{SP}}$) is a family of solutions $\mu^{s,\nu}$ to \eqref{NLFPKE_Intro}-\eqref{Initial-cond_Intro} with $\mu^{s,\nu} \in \mathcal{A}_{s,\nu}$, which fulfills \eqref{Flow_intro} for all $\nu \in A_s$.
	\end{enumerate}
\end{dfn}
\begin{exa}\label{Example_flow-ad families}
	\begin{enumerate}
		\item [(i)] The family $\mathcal{A}^{(1)}_{s,\nu} := \mathcal{M}_{s,\nu}$ is flow-admissible. If $A_s= \mathcal{SP}$ for each $s\in [0,T)$, we call a flow with respect to $\{\mathcal{A}^{(1)}_{s,\nu}\}$ an \textup{entire subprobability flow}.
		\item[(ii)] Likewise, the family
		$$\mathcal{A}^{(2)}_{s,\nu} := \begin{cases}
			\mathcal{M}^1_{s,\nu} &,\text{ if }\nu \in \mathcal{P}\\
			\emptyset&,\text{ else}
		\end{cases}$$
		is flow-admissible, and if $A_s = \mathcal{P}$, we call a flow with respect to $\{\mathcal{A}^{(2)}_{s,\nu}\}$ an \textup{entire probability flow.}
		\item[(iii)] Nemytskii type-coefficients: Let $\mathcal{S}_0 = \mathcal{SP}_{\ll}$ and 
		$$a(t,\zeta,x) = \bar{a}(t,\frac{d\zeta}{dx}(x),x), \,b(t,\zeta,x) = \bar{b}(t,\frac{d\zeta}{dx}(x),x)$$
		for Borel coefficients $\bar{a},\bar{b}$ on $(0,T)\times \mathbb{R}\times \mathbb{R}^d$ (to define $a$ and $b$ pointwise, we consider the version of $d\zeta / dx$ which is $0$ on the complement of its Lebesgue points). Let $\mathfrak{M}\supseteq \mathcal{SP}_{\ll}$. Then, the family $\{\mathcal{A}^{(3)}_{s,\nu}\}$, 
		$$\mathcal{A}^{(3)}_{s,\nu} := \begin{cases}
			\mathcal{M}^\ll_{s,\nu} &,\text{ if }\nu \in \mathfrak{M}\\
			\emptyset&,\text{ else}
		\end{cases}$$
		is flow-admissible.
	\end{enumerate}
\end{exa}

\section{Main results and proofs}
\begin{theorem}\label{Thm-1}
	Let $(\mathcal{H},\tau)$ be a Hausdorff topological space with $\mathcal{H}\subseteq \mathcal{SP}$  and $\tau_v \subseteq \tau$, and let $\{\mathcal{A}_{s,\nu}\}_{(s,\nu) \in [0,T)\times \mathcal{SP}}$ be a flow-admissible family of solution sets to \eqref{NLFPKE_Intro}-\eqref{Initial-cond_Intro} such that $\mathcal{A}_{s,\nu}$ is compact in $C_{s,T}\mathcal{H}$ with respect to the topology of pointwise convergence for each admissible initial condition $(s,\nu)$. Then there exists a flow to \eqref{NLFPKE_Intro}-\eqref{Initial-cond_Intro} with respect to $\{\mathcal{A}_{s,\nu}\}_{(s,\nu) \in [0,T)\times \mathcal{SP}}$.
\end{theorem}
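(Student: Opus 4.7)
The plan is to construct the flow by a Krylov--Stroock--Varadhan style iterative restriction, working purely on the level of measure curves in the spirit of \cite{StroockVaradh2007,Krylov_1973,Rehmeier_Flow-JEE}. First I would fix a countable measure-separating family $\{\varphi_k\}_{k \in \mathbb{N}} \subseteq C_c(\mathbb{R}^d)$ (e.g.\ a countable sup-norm dense subset of $C_c(\mathbb{R}^d)$) and a countable dense set $\{t_n\}_{n \in \mathbb{N}} \subseteq (0,T)$, and enumerate the pairs $(k,n)$ in a single sequence indexed by $m \in \mathbb{N}$. For each admissible $(s,\nu)$ I would inductively set $\mathcal{A}^0_{s,\nu} := \mathcal{A}_{s,\nu}$ and, at step $m$ with corresponding pair $(k,n)$,
$$ \mathcal{A}^m_{s,\nu} := \Bigl\{ \mu \in \mathcal{A}^{m-1}_{s,\nu} : \int \varphi_k \, d\mu_{t_n} = \sup_{\mu' \in \mathcal{A}^{m-1}_{s,\nu}} \int \varphi_k \, d\mu'_{t_n} \Bigr\} $$
if $t_n > s$, and $\mathcal{A}^m_{s,\nu} := \mathcal{A}^{m-1}_{s,\nu}$ otherwise. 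Because $\tau_v \subseteq \tau$ and convergence in $C_{s,T}\mathcal{H}$ is pointwise in $t$, the functional $\mu \mapsto \int \varphi_k\,d\mu_{t_n}$ is continuous on the compact set $\mathcal{A}^{m-1}_{s,\nu}$, so the supremum is attained and each $\mathcal{A}^m_{s,\nu}$ is nonempty and compact; the finite intersection property in the Hausdorff compact space $\mathcal{A}_{s,\nu}$ then ensures $\mathcal{A}^\infty_{s,\nu} := \bigcap_{m} \mathcal{A}^m_{s,\nu}$ is nonempty.

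A short argument shows that $\mathcal{A}^\infty_{s,\nu}$ is a singleton $\{\mu^{s,\nu}\}$: any two elements $\mu,\mu'$ share all values $\int \varphi_k\,d\mu_{t_n}$ for $t_n > s$, so the measure-separating property forces $\mu_{t_n} = \mu'_{t_n}$ for every such $t_n$, and $\tau$-continuity (hence $\tau_v$-continuity, since $\tau_v \subseteq \tau$) together with density of $\{t_n\} \cap (s,T)$ in $(s,T)$ makes the two curves coincide throughout $(s,T)$.

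The main obstacle is the flow property $\mu^{s,\nu}_t = \mu^{r,\mu^{s,\nu}_r}_t$ for admissible $(s,\nu)$ and $s < r < t < T$. Note first that $\mu^{s,\nu}_r$ is admissible, since $\mu^{s,\nu}|_{(r,T)} \in \mathcal{A}_{r,\mu^{s,\nu}_r}$ by flow-admissibility (a). Writing $\eta := \mu^{r,\mu^{s,\nu}_r}$, the concatenation $\mu^{s,\nu} \circ_r \eta$ lies in $\mathcal{A}_{s,\nu}$ by flow-admissibility (b), and it would suffice to show it in fact lies in every $\mathcal{A}^m_{s,\nu}$, for then $\mu^{s,\nu} \circ_r \eta \in \mathcal{A}^\infty_{s,\nu} = \{\mu^{s,\nu}\}$ forces $\eta_t = \mu^{s,\nu}_t$ for all $t \in (r,T)$. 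A naive induction on $m$ breaks at pairs $(k,n)$ with $t_n > r$: here $(\mu^{s,\nu} \circ_r \eta)_{t_n} = \eta_{t_n}$, and $\eta$ maximizes its criterion only over $\mathcal{A}^{m-1}_{r,\mu^{s,\nu}_r}$, whereas membership in $\mathcal{A}^m_{s,\nu}$ requires maximality over the a priori larger set $\mathcal{A}^{m-1}_{s,\nu}$. The remedy is a \emph{joint} induction on $m$ for two coupled claims: (C1) if $\mu \in \mathcal{A}^m_{s,\nu}$ and $\eta \in \mathcal{A}^m_{r,\mu_r}$, then $\mu \circ_r \eta \in \mathcal{A}^m_{s,\nu}$; and (C2) if $\mu \in \mathcal{A}^m_{s,\nu}$, then $\mu|_{(r,T)} \in \mathcal{A}^m_{r,\mu_r}$. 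Both hold at $m=0$ by flow-admissibility, and in the inductive step (C2) at level $m-1$ places $\mu|_{(r,T)}$ inside $\mathcal{A}^{m-1}_{r,\mu_r}$ so that the optimality of $\eta$ over $\mathcal{A}^{m-1}_{r,\mu_r}$ chains with that of $\mu$ over $\mathcal{A}^{m-1}_{s,\nu}$, while (C1) at level $m-1$ supplies the concatenation $\mu \circ_r \eta' \in \mathcal{A}^{m-1}_{s,\nu}$ used to prove (C2) for an arbitrary competitor $\eta' \in \mathcal{A}^{m-1}_{r,\mu_r}$. The two claims thereby reinforce each other at every level, which closes the induction and delivers the flow.
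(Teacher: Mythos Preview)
Your proposal is correct and follows essentially the same Krylov--Stroock--Varadhan selection scheme as the paper: fix a countable measure-separating family and a countable dense time set, iteratively restrict each $\mathcal{A}_{s,\nu}$ to the maximizers of the successive functionals, and use compactness plus the Hausdorff property to obtain a singleton. Your joint induction on (C1) and (C2) is a clean packaging of exactly the argument the paper carries out step by step (there the subsequence relation $(m^r_l)_l \subseteq (m^s_l)_l$ plays the role of aligning the two selection procedures), so the two proofs coincide in substance.
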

An inspection of the proof of the above result yields the following second main theorem.
\begin{theorem}\label{Thm-2}
	In the situation of Theorem \ref{Thm-1}, the following are equivalent:
	\begin{enumerate}
		\item [(i)] There exists at most one flow with respect to $\{\mathcal{A}_{s,\nu}\}_{(s,\nu) \in [0,T)\times \mathcal{SP}}$.
		\item[(ii)] Solutions in each $\mathcal{A}_{s,\nu}$ are unique.
	\end{enumerate}
\end{theorem}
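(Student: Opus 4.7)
The direction (ii)$\Rightarrow$(i) is immediate: if each admissible $\mathcal{A}_{s,\nu}$ is a singleton, then any flow with respect to $\{\mathcal{A}_{s,\nu}\}$ has pointwise-determined values at every admissible pair, and hence is unique.

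For (i)$\Rightarrow$(ii) I would proceed by contraposition. Assume that $\mathcal{A}_{s_0,\nu_0}$ contains two distinct solutions $\mu^{(1)} \neq \mu^{(2)}$ for some admissible pair $(s_0,\nu_0)$. The plan is to exhibit two distinct flows $F^{(i)}$, $i \in \{1,2\}$, with $F^{(i)}_{s_0,\nu_0} = \mu^{(i)}$. The statement itself suggests that this should follow from inspection of the proof of Theorem \ref{Thm-1}: I envisage that this proof yields a sequential selection procedure (for instance, iterated compactness-based minimization along a countable family of continuous functionals separating points of $C_{s,T}\mathcal{H}$, which exists because $\tau_v \subseteq \tau$ and since rational times together with $\varphi \in C_c^\infty(\mathbb{R}^d)$ supply countably many separating functionals $\sigma \mapsto \int \varphi\,d\sigma_t$) in which the value selected at a single prescribed admissible pair can be any chosen element of the corresponding (compact, non-empty) set.

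Concretely, for each $i$ I would replace $\{\mathcal{A}_{s,\nu}\}$ by a refined flow-admissible sub-family $\{\mathcal{A}^{(i)}_{s,\nu}\}$ obtained by imposing the following constraints: at $(s_0,\nu_0)$ only solutions whose restriction to $(s_0,T)$ coincides with $\mu^{(i)}$ (including $\mu^{(i)}$ itself) remain; at pairs of the form $(s,\mu^{(i)}_s)$ for $s\geq s_0$, only $\mu^{(i)}|_{(s,T)}$ remains, as forced by property (a) of Definition \ref{Def_flow-adm}; at pairs $(s,\nu)$ with $s < s_0$ that admit some $\sigma \in \mathcal{A}_{s,\nu}$ with $\sigma_{s_0} = \nu_0$, only solutions agreeing with $\mu^{(i)}$ on $(s_0,T)$ remain (non-empty by property (b) applied to $\sigma \circ_{s_0}\mu^{(i)}$); and at all other admissible pairs, $\mathcal{A}_{s,\nu}$ is kept as is. Applying Theorem \ref{Thm-1} to $\{\mathcal{A}^{(i)}_{s,\nu}\}$ then yields a flow whose value at $(s_0,\nu_0)$ is $\mu^{(i)}$ by construction, so the two flows thus produced differ there, contradicting (i).

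The main obstacle is to verify that each $\{\mathcal{A}^{(i)}_{s,\nu}\}$ indeed satisfies both hypotheses of Theorem \ref{Thm-1}. Compactness of the refined sets in $C_{s,T}\mathcal{H}$ under pointwise convergence follows because the imposed equality conditions are closed in the Hausdorff topology $\tau$, and closed subsets of compact sets are compact. The delicate point is property (a) at admissible pairs $(s,\nu)$ with $s \geq s_0$ and $\nu \neq \mu^{(i)}_s$: a solution $\sigma \in \mathcal{A}_{s,\nu}$ may accidentally cross the trajectory of $\mu^{(i)}$ at some later time $t$ without coinciding with $\mu^{(i)}$ on $(t,T)$, and such solutions must be further excluded from $\mathcal{A}^{(i)}_{s,\nu}$ to maintain flow-admissibility. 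Carrying out this refinement while preserving compactness and non-emptiness is the technical crux of the argument, for which again property (b) of Definition \ref{Def_flow-adm} (providing concatenations with $\mu^{(i)}$) and the closedness of coincidence conditions in $(\mathcal{H},\tau)$ are the relevant tools. Once this is settled, the two resulting flows differ at $(s_0,\nu_0)$, completing the contrapositive.
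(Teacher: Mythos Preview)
Your trivial direction (ii)$\Rightarrow$(i) matches the paper.

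For (i)$\Rightarrow$(ii), the paper's route is different and considerably simpler than yours. It never modifies $\{\mathcal{A}_{s,\nu}\}$; instead it runs the selection procedure of Theorem~\ref{Thm-1} twice with two different enumerations. Starting from a symmetric separating family $H=-H\subseteq C_c(\mathbb{R}^d)$ and an enumeration $\xi$, one obtains a first flow $\{\mu^{s,\nu}\}$. If $\gamma\in\mathcal{A}_{s',\nu'}$ with $\gamma\neq\mu^{s',\nu'}$, continuity and the separating property give $(h,q)\in H\times\mathbb{Q}^T_{s'}$ with $\int h\,d\gamma_q>\int h\,d\mu^{s',\nu'}_q$. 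Re-running the construction with a new enumeration $\xi'$ that places the pair $(h,q)$ at position $0$ yields a second flow $\{\eta^{s,\nu}\}$ for which
\[
\int h\,d\eta^{s',\nu'}_q=\sup_{\mu\in\mathcal{A}_{s',\nu'}}\int h\,d\mu_q\geq\int h\,d\gamma_q>\int h\,d\mu^{s',\nu'}_q,
\]
hence $\eta^{s',\nu'}\neq\mu^{s',\nu'}$. No refinement of the admissible family is needed, and no prescribed element has to be hit exactly.

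Your approach has a genuine gap precisely where you locate the ``technical crux''. The naive refinement you write down is, as you note, not flow-admissible because of crossings. But the further refinement required to restore property~(a) of Definition~\ref{Def_flow-adm}---keeping only those $\sigma\in\mathcal{A}_{s,\nu}$ such that $\sigma_r=\mu^{(i)}_r$ for some $r\geq s_0$ forces $\sigma|_{(r,T)}=\mu^{(i)}|_{(r,T)}$---is \emph{not} a closed condition for pointwise convergence in $C_{s,T}\mathcal{H}$. A sequence $\sigma^{(n)}$ may never meet $\mu^{(i)}$ (so each $\sigma^{(n)}$ satisfies the condition vacuously) while its pointwise limit touches $\mu^{(i)}$ at one time and departs afterwards. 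Hence the refined sets need not be compact, and Theorem~\ref{Thm-1} cannot be re-applied to them. The ``closedness of coincidence conditions'' you invoke holds only for equality at \emph{prescribed} times, not for the conditional first-hitting constraint that flow-admissibility forces on you here. I do not see how to repair this without essentially switching to the paper's enumeration trick.
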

We formulate Theorem \ref{Thm-1} in a very general way in order to present a unified treatment of global and singular dependencies of solutions on $a$ and $b$, see the applications in Section 4.
\\

The proof of Theorem \ref{Thm-2} will follow as a consequence of the selection method of the proof of Theorem \ref{Thm-1}. Concerning the latter, we need the following prerequisites. 
We set $\mathbb{Q}_s^T := \mathbb{Q}\cap [s,T)$.
\begin{dfn}\label{Def_enumeration}
	\begin{enumerate}
		\item [(i)] We call a bijective map $\xi: \mathbb{N}\times \mathbb{Q}^T_0 \to \mathbb{N}_0$ an \textup{enumeration}. Given such $\xi$ and $k \in \mathbb{N}_0$, we write $(n_k,q_k) :=\xi^{-1}(k)$.
		\item[(ii)]  For $s \in [0,T)$, denote by $(m^s_k)_{k \in \mathbb{N}_0}$ the enumerating sequence of $\mathbb{N}\times \mathbb{Q}^T_s$ with respect to a given enumeration $\xi$, i.e. there exist exactly $k$ elements $(n,q)$ in $\mathbb{N}\times \mathbb{Q}_s^T$ with $\xi(n,q) < m^s_k$.
	\end{enumerate}
\end{dfn}
Note that for $0\leq s < r< T$, the sequence $(m^r_l)_{l \in \mathbb{N}_0}$ is a subsequence of $(m^s_l)_{l \in \mathbb{N}_0}$. Moreover, $C_{s,T}\mathcal{H}$ with the topology of pointwise convergence is Hausdorff, since so is $\mathcal{H}$.\\
\\
\textit{Proof of Theorem \ref{Thm-1}.} Let $H = \{h_n, n \in \mathbb{N}\}\subseteq C_c(\mathbb{R}^d)$ be measure separating and let $\xi$ be an enumeration, for which we use the notation from Definition \ref{Def_enumeration}. Let $(s,\nu) \in [0,T)\times \mathcal{SP}$ be any admissible initial condition and consider
\begin{align*}
	G^{s,\nu}_0: C_{s,T}\mathcal{H} &\to \mathbb{R},\,\, \mu = (\mu_t)_{t \in [s,T)} \mapsto \int_{\mathbb{R}^d} h_{n_{m^s_0}}d\mu_{q_{m^s_0}}, \\
	u^{s,\nu}_0 &:= \underset{\mu \in \mathcal{A}_{s,\nu}}{\text{sup}}	G^{s,\nu}_0(\mu), \\M^{s,\nu}_0&:= ({G^{s,\nu}_0})^{-1}(u^{s,\nu}_0) \cap \mathcal{A}_{s,\nu}.
\end{align*}
Since $\tau_v \subseteq \tau$ and $H \subseteq C_c(\mathbb{R}^d)$, $G^{s,\nu}_0$ is continuous on $C_{s,T}\mathcal{H}$. Furthermore, since $(s,\nu)$ is admissible and $\mathcal{A}_{s,\nu}$ is nonempty and compact by assumption, $M^{s,\nu}_{0}$ is nonempty and compact as well.
Similarly, define iteratively for $k \in \mathbb{N}_0$
\begin{align*}
	G^{s,\nu}_{k+1}: C_{s,T}\mathcal{H} &\to \mathbb{R},\,\, (\mu_t)_{t \in [s,T)} \mapsto \int_{\mathbb{R}^d} h_{n_{m^s_{k+1}}}d\mu_{q_{m^s_{k+1}}}, \\
	u^{s,\nu}_{k+1} &:= \underset{\mu \in M^{s,\nu}_{k}}{\text{sup}} G^{s,\nu}_{k+1}(\mu), \\M^{s,\nu}_{k+1}&:= ({G^{s,\nu}_{k+1}})^{-1}(u^{s,\nu}_{k+1}) \cap M^{s,\nu}_{k}.
\end{align*}Then the same assertions as for $G^{s,\nu}_0$ and $M_0^{s,\nu}$ are true for $G^{s,\nu}_{k+1}$ and $M_{k+1}^{s,\nu}$. Since 
$M^{s,\nu}_{k+1} \subseteq M^{s,\nu}_{k}$ and $C_{s,T}\mathcal{H}$ is Hausdorff, we obtain
$$M^{s,\nu} := \bigcap_{k \geq 0} M^{s,\nu}_{k} \neq \emptyset.$$
Now assume $\mu^{(i)} = (\mu^{(i)}_t)_{t \in [s,T)} \in M^{s,\nu}$ for $i\in \{1,2\}$. By construction, this implies 
\begin{equation*}
	\int_{\mathbb{R}^d} h_{n_{m^s_k}}d\mu^{(1)}_{q_{m^s_k}} = \int_{\mathbb{R}^d} h_{n_{m^s_k}}d\mu^{(2)}_{q_{m^s_k}},\quad k \in \mathbb{N}_0.
\end{equation*}Since $\{(n_{m^s_k},q_{m^s_k}), \, k \in \mathbb{N}_0\} =  \mathbb{N}\times \mathbb{Q}^T_s$, this yields $\int h_n d\mu^{(1)}_q = \int h_n d\mu^{(2)}_q$ for all $(n,q)\in \mathbb{N}\times \mathbb{Q}^T_s$ and hence $\mu^{(1)}_q = \mu^{(2)}_q$ for all $q \in \mathbb{Q}_s^T$, because $H$ is measure separating. Since both $\mu^{(1)}$ and $\mu^{(2)}$ are continuous in the Hausdorff space $\mathcal{H}$, $\mu^{(1)} = \mu^{(2)}$ follows. Consequently, $M^{s,\nu} \subseteq \mathcal{A}_{s,\nu}$ is a singleton, i.e. $\mathcal{M}^{s,\nu}  =\{\mu^{s,\nu}\}$. 

It remains to show that the family $\{\mu^{s,\nu}\}$ is a flow. To this end, let $(s,\nu)$ be admissible, and fix $0\leq s < r < t< T$. Consider the admissible initial condition $(r,\mu^{s,\nu}_r)$ and let $\gamma = (\gamma_t)_{t \in [r,T)}\in M^{r,\mu^{s,\nu}_r}$ be the corresponding uniquely selected solution from the first part of the proof, i.e. with our notation $\gamma = \mu^{r,\mu^{s,\nu}_r}$. We need to show
\begin{equation}\label{Eq for flow}
	\gamma_t=\mu^{s,\nu}_t,\quad \forall\,t\in [r,T).
\end{equation}
Set $\eta := \mu^{s,\nu}\circ_r \gamma \in \mathcal{A}_{s,\nu}$. Due to the iterative maximizing selection procedure of the first part of the proof, we have
\begin{equation}\label{important_4}
	\int_{\mathbb{R}^d} h_{n_{m^s_0}}d\mu^{s,\nu}_{q_{m^s_0}} \geq \int_{\mathbb{R}^d} h_{n_{m^s_0}}d\eta_{q_{m^s_0}}. 
\end{equation}If $q_{m^s_0} \in [s,r)$, then $\eta_{q_{m^s_0}}=\mu^{s,\nu}_{q_{m^s_0}}$ and we have equality in (\ref{important_4}). If $q_{m^s_0} \in \,[r,T)$, then $q_{m^s_0}=q_{m^r_0}$ and by the characterizing property of $\gamma$ in $\mathcal{A}_{r,\mu^{s,\nu}_r}$, and since $(\mu^{s,\nu}_t)_{t \in [r,T)} \in \mathcal{A}_{r,\mu^{s,\nu}_r}$, we obtain
$$\int_{\mathbb{R}^d} h_{n_{m^s_0}}d\mu^{s,\nu}_{q_{m^s_0}} \leq \int_{\mathbb{R}^d} h_{n_{m^s_0}}d\gamma_{q_{m^s_0}} =\int_{\mathbb{R}^d} h_{n_{m^s_0}}d\eta_{q_{m^s_0}},$$
and hence we have equality in (\ref{important_4}) in any case.  Next, consider $m^s_1$: since (\ref{important_4}) is an equality, both $(\mu^{s,\nu}_t)_{t \in [s,T)}$ and $(\eta_t)_{t \in [s,T)}$ belong to $M_0^{s,\nu}$. Hence, using the characterization of $\mu^{s,\nu}$ again, we obtain
\begin{equation}\label{important_6}
	\int_{\mathbb{R}^d} h_{n_{m^s_1}}d\mu^{s,\nu}_{q_{m^s_1}} \geq \int_{\mathbb{R}^d} h_{n_{m^s_1}}d\eta_{q_{m^s_1}},
\end{equation}clearly with equality if $q_{m^s_1} \in [s,r)$. If $q_{m^s_1} \in \, [r,T)$ and $q_{m^s_0} \in [s,r)$, then $m^s_1 = m^r_0$, i.e.
\begin{equation}\label{important_7}
	\int_{\mathbb{R}^d} h_{n_{m^s_1}}d\mu^{s,\nu}_{q_{m^s_1}} \leq \int_{\mathbb{R}^d} h_{n_{m^s_1}}d\gamma_{q_{m^s_1}} = \int_{\mathbb{R}^d} h_{n_{m^s_1}}d\eta_{q_{m^s_1}}
\end{equation}by the characterizing property of $\gamma$, which gives equality in (\ref{important_6}). If $q_{m^s_0}, q_{m^s_1} \in\, [r,T)$, then $m_0^s=m_0^r$, $m^s_1=m^r_1$ and both $\mu^{s,\nu}$ and $\gamma$ are in $M_{0}^{r,\mu^{s,\nu}_r}$, which also gives (\ref{important_7}). Hence, equality in (\ref{important_6}) holds in any case. Iterating this procedure yields
\begin{equation*}
	\int_{\mathbb{R}^d} h_{n_{m^s_k}}d\mu^{s,\nu}_{q_{m^s_k}} = \int_{\mathbb{R}^d} h_{n_{m^s_k}}d\eta_{q_{m^s_k}},\quad \forall \, k \in \mathbb{N}_0,
\end{equation*}
and hence, since $H$ is measure separating,
$$\mu^{s,\nu}_q = \eta_q,\quad \forall q\in \mathbb{Q}^T_s, $$so in particular $\mu^{s,\nu}_q = \eta_q =  \gamma_q$ for all $q \in \mathbb{Q}^T_r$.
Since both curves are continuous, we obtain (\ref{Eq for flow}).
\qed
\begin{rem}
	The proof works for any separable measure separating family from $C_c(\mathbb{R}^d)$ and for any dense countable subset of $[s,T)$ instead of $\mathbb{Q}_s^T$. In general, the selected flow depends on these choices.
\end{rem}
\textit{Proof of Theorem \ref{Thm-2}.}
For the nontrivial implication of the assertion, assume there is an admissible initial condition $(s',\nu') \in [0,T)\times \mathcal{SP}$ with $|\mathcal{A}_{s',\nu'}| \geq 2$. As mentioned in the previous remark, we may choose an enumeration $\xi$ and a family of measure separating functions $H= \{h_n, n \in \mathbb{N}\} \subseteq C_c(\mathbb{R}^d)$ with $H=-H$. Consider the flow $\{\mu^{s,\nu}\}$ with $(s,\nu)$ running through all admissible initial conditions, constructed as in the proof of Theorem \ref{Thm-1} subject to this $H$ and $\xi$. By assumption, there exists $\gamma \in \mathcal{A}_{s',\nu'}$ with $\mu^{s',\nu'} \neq \gamma$, and since both curves are continuous, there is $q \in \mathbb{Q}_{s'}^T$ such that $\mu^{s',\nu'}_{q} \neq \gamma_{q}$. Thus, considering $-h$ instead of $h$ if necessary, there is $h \in H$ such that
\begin{equation}\label{ineq1}
	\int_{\mathbb{R}^d}h\,d\gamma_{q} > \int_{\mathbb{R}^d}h\,d\mu^{s',\nu'}_{q}.
\end{equation}
Now consider a new enumeration $\xi'$ such that according to $\xi'$ we have $(h_{n_0},q_{n_0}) = (h,q)$, and denote the flow subject to $H$ and $\xi'$ by $\{\eta^{s,\nu}\}$ (the sets of admissible initial conditions remain unchanged). Selecting as in the proof of Theorem \ref{Thm-1} gives
\begin{equation*}
	\int_{\mathbb{R}^d}h \,d\eta^{s',\nu'}_{q} = \underset{\mu \in \mathcal{A}_{s',\nu'}}{\sup} \bigg(\int_{\mathbb{R}^d}h \,d\mu_{q}\bigg).
\end{equation*}
Therefore, taking into account (\ref{ineq1}), we conclude
\begin{equation*}
	\int_{\mathbb{R}^d}h \,d\eta^{s',\nu'}_{q} \geq \int_{\mathbb{R}^d}h\, d\gamma_{q} > \int_{\mathbb{R}^d}h\,d\mu^{s',\nu'}_{q}.
\end{equation*}Hence $\eta^{s',\nu'} \neq \mu^{s',\nu'}$, which contradicts (i) and finishes the proof. 
\qed
\begin{rem}
	As mentioned in the introduction, for the case of linear equations, i.e. for $a$ and $b$ independent of $\zeta \in \mathcal{SP}$, the above results particularly improve Theorems 3.2. and 3.16. from \cite{Rehmeier_Flow-JEE}, where we exploited the connection between \eqref{NLFPKE_Intro} and the associated martingale problem via the \textup{superposition principle} in order to obtain compactness of solutions to \eqref{NLFPKE_Intro}-\eqref{Initial-cond_Intro} via compactness of solutions to the martingale problem. The latter is, however, only known for linear cases under rather restrictive assumptions on $a$ and $b$. Moreover, in \cite{Rehmeier_Flow-JEE}, we only considered entire probability selections in the sense of Example \ref{Example_flow-ad families} (ii).
\end{rem}

\section{Applications}
Note that in any of the forthcoming applications also Theorem \ref{Thm-2} holds. For the convenience of the reader, we state a sufficiently general version of the Arzelà-Ascoli theorem, which we use several times below. For topological spaces $I$ and $Y$, denote by $C(I,Y)$ the set of continuous functions from $I$ to $Y$.
\begin{prop}[Arzelà-Ascoli theorem, Thm.47.1 \cite{Munkres_top_book}]\label{AA}
	Let $I$ be an interval and $(Y,d)$ a metric space. Then, $\mathcal{F}\subseteq C(I,Y)$ is relatively compact in the compact-open topology if and only if $\mathcal{F}$ is pointwise relatively compact and equicontinuous, i.e. if
	\begin{enumerate}
		\item [(i)]$\{f(t), f \in \mathcal{F}\}$ is relatively compact in $Y$ for all $t \in I$
		\item[(ii)] For all $t\in I$ and $\varepsilon >0$ there is $\delta>0$ such that $$r \in I, |t-r|< \delta \implies \sup_{f \in \mathcal{F}}d(f(t),f(r)) < \varepsilon.$$
	\end{enumerate}
\end{prop}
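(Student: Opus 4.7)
The plan is to prove both directions of this classical result using a standard $\varepsilon/3$ argument. For the forward direction, assume $\mathcal{F}$ is relatively compact in the compact-open topology. I would first note that pointwise relative compactness at any $t \in I$ follows from the continuity of the evaluation map $\mathrm{ev}_t: C(I,Y)\to Y$, $f \mapsto f(t)$, because $\{f(t): f \in \mathcal{F}\}$ is then the continuous image of a relatively compact set. For equicontinuity at $t$, I would fix a compact subinterval $K \subseteq I$ containing a neighborhood of $t$ and use that on $C(K,Y)$ the compact-open and the uniform-convergence topologies coincide (since $Y$ is metric), so $\overline{\mathcal{F}}|_K$ is totally bounded. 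Covering it by finitely many uniform $\varepsilon/3$-balls centered at functions $f_1,\dots,f_N \in C(K,Y)$ and exploiting continuity of each $f_i$ at $t$ yields a single $\delta>0$ that controls $d(f(t),f(r))$ for every $f \in \mathcal{F}$ via the triangle inequality.

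For the converse, assume (i) and (ii) and let $(f_n)\subseteq \mathcal{F}$ be any sequence; the goal is to extract a subsequence converging uniformly on compact subintervals of $I$. I would first pick a countable dense set $D=\{t_k:k\in\mathbb{N}\}\subseteq I$. Since for each $k$ the sequence $(f_n(t_k))_n$ lies in the compact set $\overline{\{f(t_k): f\in\mathcal{F}\}}$, a Cantor diagonal extraction produces a subsequence $(f_{n_j})$ converging pointwise on $D$ to some map $f:D\to Y$. To upgrade this to uniform convergence on a compact $K\subseteq I$, given $\varepsilon>0$ I would use equicontinuity at each $t \in K$ and compactness of $K$ to cover $K$ by finitely many open sets $U_1,\dots,U_M$ such that $d(g(s),g(t))<\varepsilon/3$ for all $s,t$ in the same $U_m$ uniformly in $g\in\mathcal{F}$, then fix representatives $t_{k_m}\in D\cap U_m$. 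For $j,j'$ large enough the finitely many differences $d(f_{n_j}(t_{k_m}),f_{n_{j'}}(t_{k_m}))$ drop below $\varepsilon/3$, and the triangle inequality makes $(f_{n_j})$ uniformly Cauchy on $K$.

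The main subtlety is passing from pointwise convergence on $D$ to uniform convergence on $K$ without assuming $Y$ is complete: pointwise relative compactness forces $(f_{n_j}(s))_j$ to lie in a compact, hence complete, subset of $Y$ for each $s \in K$, so the uniform Cauchy property yields a pointwise limit $f(s)\in Y$ on all of $K$. Equicontinuity of the sequence combined with uniform convergence then shows $f\in C(K,Y)$. Writing $I=\bigcup_n K_n$ as an exhaustion by compact subintervals and running one more diagonal extraction over $n$ produces a subsequence that converges uniformly on every compact subset, i.e. in the compact-open topology on $C(I,Y)$, completing the argument.
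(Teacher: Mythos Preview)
Your argument is a correct, standard proof of the Arzel\`a--Ascoli theorem. One minor redundancy: once you have extracted the diagonal subsequence $(f_{n_j})$ converging on the dense set $D$, your $\varepsilon/3$ argument already shows this \emph{same} subsequence is uniformly Cauchy on every compact $K\subseteq I$, so the final diagonal extraction over an exhaustion $I=\bigcup_n K_n$ is unnecessary (though harmless).

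As for comparison with the paper: there is nothing to compare. The paper does not prove this proposition at all; it is merely quoted from Munkres' textbook (Theorem~47.1) as a tool to be applied in the subsequent compactness arguments for solution sets $\mathcal{M}_{s,\nu}$. Your write-up therefore goes well beyond what the paper itself contains for this statement.
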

Recall that the compact-open topology on $C(I,Y)$ coincides with the topology of locally uniform convergence, which is the topology of uniform convergence, if $I$ is compact.

\subsection{Linear equations}\label{Subsection_lin-appl}
Here we consider coefficients of \textit{linear} FPK equations, i.e. $a$ and $b$ are independent of $\zeta \in \mathcal{S}_0$.
\subsubsection{Entire subprobability flows} Suppose the Borel coefficients $a_{ij},b_i: (0,T)\times \mathbb{R}^d \to \mathbb{R}$, $1\leq i,j \leq d$, satisfy \\
\\
\textbf{Assumption A1.}
\begin{enumerate}
	\item [(A1.i)] $\int_{0}^{T}\sup_{x \in K}\big(|a_{ij}(t,x)|+|b_i(t,x)|\big)dt < \infty \,\,\forall\, K \subset \subset \mathbb{R}^d$
	\item[(A1.ii)] $x \mapsto a_{ij}(t,x)$ and $x \mapsto b_i(t,x)$ are continuous for $dt$-a.a. $t \in (0,T)$.
\end{enumerate}
\begin{prop}\label{Prop_entire-flow-SP}
	Suppose that Assumption A1 is fulfilled and that $\mathcal{M}_{s,\nu}$ is nonempty for each $(s,\nu) \in [0,T)\times \mathcal{SP}$. Then, there exists an entire subprobability flow $\{\mu^{s,\nu}\}_{(s,\nu) \in [0,T)\times \mathcal{SP}}$ for \eqref{NLFPKE_Intro}-\eqref{Initial-cond_Intro}.
\end{prop}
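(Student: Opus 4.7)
The plan is to apply Theorem~\ref{Thm-1} with the natural choice $\mathcal{H} = \mathcal{SP}$, $\tau = \tau_v$, and the flow-admissible family $\mathcal{A}_{s,\nu} := \mathcal{A}^{(1)}_{s,\nu} = \mathcal{M}_{s,\nu}$ from Example~\ref{Example_flow-ad families}(i). By the non-emptiness hypothesis, $A_s = \mathcal{SP}$ for every $s \in [0,T)$, so the resulting flow will indeed be entire. Since $(\mathcal{SP},\tau_v)$ is compact Hausdorff and metrizable, the only substantive task is to show that each $\mathcal{M}_{s,\nu}$ is compact in $C_{s,T}\mathcal{SP}$ with the topology of pointwise (vague) convergence. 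I will achieve this by proving (a) relative compactness in the compact-open topology via the Arzelà--Ascoli Proposition~\ref{AA} and (b) closedness under pointwise vague limits; since the compact-open topology refines the pointwise one, compactness there transfers to the coarser target topology.

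For equicontinuity, fix $\varphi \in C^2_c(\mathbb{R}^d)$ and let $K := \supp \varphi$. The integral identity \eqref{Sol-eq_NLFPKE_cont} together with $\mu_\tau(K) \leq 1$ gives
\begin{equation*}
\left| \int_{\mathbb{R}^d} \varphi\, d\mu_t - \int_{\mathbb{R}^d} \varphi\, d\mu_r \right| \;\leq\; C_\varphi \int_r^t \sup_{x \in K}\bigl( |a_{ij}(\tau,x)| + |b_i(\tau,x)| \bigr)\, d\tau
\end{equation*}
uniformly in $\mu \in \mathcal{M}_{s,\nu}$, and by (A1.i) the right-hand side vanishes as $|t-r| \to 0$. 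A standard approximation (uniform density of $C^2_c(\mathbb{R}^d)$ in $C_c(\mathbb{R}^d)$ combined with the subprobability bound $\mu_\tau(\mathbb{R}^d) \leq 1$) promotes this to equicontinuity of $t \mapsto \mu_t$ in the vague metric, uniformly in $\mu \in \mathcal{M}_{s,\nu}$. Pointwise relative compactness is free from the compactness of $(\mathcal{SP},\tau_v)$, so Proposition~\ref{AA} yields relative compactness in $C_{s,T}\mathcal{SP}$.

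For closedness, suppose $\mu^n \in \mathcal{M}_{s,\nu}$ converges pointwise vaguely to some curve $\mu$; the equicontinuity just established implies $\mu \in C_{s,T}\mathcal{SP}$. It remains to pass to the limit in \eqref{Sol-eq_NLFPKE_cont}. Here (A1.ii) is essential: for $dt$-a.a.\ $\tau$, the function $\mathcal{L}_\tau \varphi$ is continuous with compact support in $K$, so vague convergence of $\mu^n_\tau$ yields $\int \mathcal{L}_\tau \varphi\, d\mu^n_\tau \to \int \mathcal{L}_\tau \varphi\, d\mu_\tau$ for $dt$-a.a.\ $\tau$. Dominated convergence---with $L^1$ majorant $C_\varphi \sup_{x \in K}(|a_{ij}(\tau,x)| + |b_i(\tau,x)|)$ supplied by (A1.i)---transfers the identity to $\mu$, so $\mu \in \mathcal{M}_{s,\nu}$. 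Theorem~\ref{Thm-1} then produces the desired flow. The main technical point is the interplay between (A1.i), which simultaneously provides the modulus-of-continuity bound for equicontinuity and the dominated-convergence majorant for closedness, and (A1.ii), which is precisely what turns $\mathcal{L}_\tau \varphi$ into an element of $C_c(\mathbb{R}^d)$ so that vague convergence $\mu^n_\tau \to \mu_\tau$ can be exploited inside the FPK integrand.
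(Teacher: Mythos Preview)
Your proposal is correct and follows essentially the same route as the paper: choose $(\mathcal{H},\tau)=(\mathcal{SP},\tau_v)$ and $\mathcal{A}_{s,\nu}=\mathcal{M}_{s,\nu}$, then verify compactness via Arzel\`a--Ascoli (pointwise relative compactness from compactness of $(\mathcal{SP},\tau_v)$, equicontinuity from the integral identity and (A1.i), closedness from (A1.ii) and dominated convergence) and invoke Theorem~\ref{Thm-1}. The only cosmetic difference is that the paper establishes equicontinuity by writing down an explicit $\tau_v$-compatible metric built from a dense sequence $\{f_l\}\subseteq C^2_c(\mathbb{R}^d)$ and estimating $d_v(\mu_{t_1},\mu_{t_2})$ directly, whereas you obtain the same conclusion by first controlling $\int\varphi\,d\mu_t-\int\varphi\,d\mu_r$ for $\varphi\in C^2_c$ and then approximating; both arguments are equivalent.
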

Concerning the proof, appealing to Theorem \ref{Thm-1}, we choose 
$$(\mathcal{H},\tau) = (\mathcal{SP},\tau_v)\text{ and }\mathcal{A}_{s,\nu} = \mathcal{M}_{s,\nu}.$$
$(\mathcal{SP},\tau_v)$ is Polish and compact. Due to (A1.i), any solution $\mu$ extends uniquely from $(s,T)$ to $[s,T]$ with $\mu_s= \nu$. Hence, here we can replace $C_{s,T}\mathcal{SP}$ by $C([s,T],\mathcal{SP})$. We consider the latter space with the topology of uniform convergence, which is independent of the choice of compatible metric $d_v$ on $\mathcal{SP}$. Indeed, this topology coincides with the compact-open topology on $C([s,T],\mathcal{SP})$. Since the topology of uniform convergence is stronger than the topology of pointwise convergence, Proposition \ref{Prop_entire-flow-SP} follows from the following lemma and Theorem \ref{Thm-1}.
\begin{lem}\label{Lem_compactness_A1-case}
	Consider $C([s,T],\mathcal{SP})$ with the topology of uniform convergence. If Assumption A1 holds, then $\mathcal{M}_{s,\nu}$ is compact in $C([s,T],\mathcal{SP})$.
\end{lem}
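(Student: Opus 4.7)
\textit{Proof plan for Lemma \ref{Lem_compactness_A1-case}.} The strategy is to invoke the Arzelà--Ascoli theorem (Proposition \ref{AA}) with $I=[s,T]$ and $Y=(\mathcal{SP},d_v)$, where $d_v$ is any metric compatible with $\tau_v$; since $\mathcal{SP}$ is vaguely compact, $(\mathcal{SP},d_v)$ is a compact metric space. This reduces relative compactness of $\mathcal{M}_{s,\nu}$ in $C([s,T],\mathcal{SP})$ (with the compact-open, hence uniform, topology) to equicontinuity alone, the pointwise relative compactness hypothesis being automatic. It then remains to show that $\mathcal{M}_{s,\nu}$ is closed.

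For the equicontinuity step, the plan is to pick a countable family $\{\varphi_k\}\subseteq C^\infty_c(\mathbb{R}^d)$ which induces $\tau_v$, so that the metric $d_v(\alpha,\beta)=\sum_k 2^{-k}\bigl(1\wedge|\int \varphi_k\,d\alpha-\int \varphi_k\,d\beta|\bigr)$ metrizes the vague topology on $\mathcal{SP}$. For each fixed $\varphi_k$ with $\supp\varphi_k\subseteq K_k\subset\subset\mathbb{R}^d$, the defining identity \eqref{Sol-eq_NLFPKE_cont} yields, for any $\mu\in\mathcal{M}_{s,\nu}$ and $s\le r\le t\le T$,
\begin{equation*}
\Bigl|\int_{\mathbb{R}^d}\varphi_k\,d\mu_t-\int_{\mathbb{R}^d}\varphi_k\,d\mu_r\Bigr|\le C(\varphi_k)\int_r^t\sup_{x\in K_k}\bigl(|a_{ij}(\tau,x)|+|b_i(\tau,x)|\bigr)\,d\tau,
\end{equation*}
and the right-hand side is bounded by an integral that is finite by (A1.i) and therefore absolutely continuous in $(r,t)$, uniformly in $\mu$. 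Summing with the $2^{-k}$-weights (and splitting the series into a finite initial part, controlled by the above estimate, and a small tail) produces the desired $d_v$-equicontinuity of $\mathcal{M}_{s,\nu}$.

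For closedness, suppose $\mu^{(n)}\to\mu$ uniformly in $C([s,T],\mathcal{SP})$ with each $\mu^{(n)}\in\mathcal{M}_{s,\nu}$. Then $\mu_s=\nu$ by continuity and the uniform bound at $s$, and for fixed $\varphi\in C^2_c(\mathbb{R}^d)$ with $\supp\varphi\subseteq K$ and fixed $t\in[s,T]$, pointwise vague convergence gives $\int\varphi\,d\mu^{(n)}_t\to\int\varphi\,d\mu_t$. To pass to the limit on the right-hand side of \eqref{Sol-eq_NLFPKE_cont}, note that by (A1.ii) the map $x\mapsto\mathcal{L}_\tau\varphi(x)$ lies in $C_c(\mathbb{R}^d)$ with support in $K$ for $dt$-a.a.\ $\tau$, so $\int\mathcal{L}_\tau\varphi\,d\mu^{(n)}_\tau\to\int\mathcal{L}_\tau\varphi\,d\mu_\tau$ for $dt$-a.a.\ $\tau$; by (A1.i) the integrand is dominated by the $dt$-integrable function $C(\varphi)\sup_{x\in K}(|a_{ij}(\tau,x)|+|b_i(\tau,x)|)$, so dominated convergence yields the required identity for $\mu$. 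Thus $\mu\in\mathcal{M}_{s,\nu}$.

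The main technical point to be careful with is the closedness argument: one needs (A1.ii) precisely to ensure that $\mathcal{L}_\tau\varphi$ is continuous and compactly supported $dt$-a.e., which in conjunction with vague convergence of $\mu^{(n)}_\tau$ to $\mu_\tau$ allows pointwise-in-$\tau$ passage to the limit; combined with the uniform $L^1((0,T))$-domination from (A1.i) this closes the argument by dominated convergence. The equicontinuity estimate is essentially a tightness-in-time statement and is straightforward; the metrization detail (truncating the tail of the $2^{-k}$-series) is routine.
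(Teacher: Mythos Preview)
Your proposal is correct and follows essentially the same route as the paper's proof: Arzel\`a--Ascoli on the compact metric space $(\mathcal{SP},d_v)$ for relative compactness (pointwise compactness being automatic, equicontinuity via \eqref{Sol-eq_NLFPKE_cont} and (A1.i) applied to a countable $C^\infty_c$-family metrizing $\tau_v$), followed by closedness via (A1.ii) to get $\mathcal{L}_\tau\varphi\in C_c(\mathbb{R}^d)$ $dt$-a.e.\ and dominated convergence from (A1.i). The only cosmetic difference is that the paper normalizes the metric weights by the $C^2$-seminorms of the test functions to absorb the constants $C(\varphi_k)$, whereas you handle these constants by the finite-part/tail splitting of the series---both arguments are equivalent and equally valid.
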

\begin{proof}
	Let $(s,\nu) \in [0,T)\times \mathcal{SP}$. In order to use Proposition \ref{AA}, we show pointwise relative compactness, equicontinuity and closedness of $\mathcal{M}_{s,\nu}$. Pointwise relative compactness, i.e. relative compactness of $\{\mu_t: \mu \in \mathcal{M}_{s,\nu}\} \subseteq \mathcal{SP}$ for each $t$, holds since $(\mathcal{SP},\tau_v)$ is compact. 
	Concerning closedness, let $\mu^{(n)} = (\mu^{(n)}_t)_{t \in [s,T]},n \geq 1,$ be a converging sequence in $\mathcal{M}_{s,\nu}$ with limit $\mu = (\mu_t)_{t \in [s,T]}\in C([s,T],\mathcal{SP})$ and let $\varphi \in C_c^2(\mathbb{R}^d)$. Clearly, 
	$$\int_{\mathbb{R}^d} \varphi\,d\mu^{(n)}_t \underset{n \to \infty}{\longrightarrow} \int_{\mathbb{R}^d}\varphi\,d\mu_t, \quad \forall \,t \in [s,T],$$
	and in particular $\mu_s = \nu$. Furthermore, due to (A1.ii), we have $\mathcal{L}_t\varphi \in C_c(\mathbb{R}^d)$ $dt$-a.s. Consequently, 
	$$\int_{\mathbb{R}^d}\mathcal{L}_t\varphi\,d\mu^{(n)}_t \underset{n \to \infty}{\longrightarrow} \int_{\mathbb{R}^d}\mathcal{L}_t\varphi\,d\mu_t\quad dt\text{-a.s.},$$
	and by (A1.i), Lebesgue's dominated convergence theorem gives
	$$\int_s^t\int_{\mathbb{R}^d}\mathcal{L}_\tau\varphi\,d\mu^{(n)}_\tau d\tau \underset{n \to \infty}{\longrightarrow} \int_s^t\int_{\mathbb{R}^d}\mathcal{L}_\tau\varphi\,d\mu_\tau d\tau.$$
	Therefore, $\mu \in \mathcal{M}_{s,\nu}$. 
	Finally, in general equicontinuity of a subset $\mathcal{C} \subset C([s,T],\mathcal{SP})$ is not independent of the particular $\tau_v$-compatible metric $d_v$ on $\mathcal{SP}$. However, since the uniform topology on $C([s,T],\mathcal{SP})$ is independent of $d_v$ and since pointwise relative compactness and closedness are topological properties independent of $d_v$, it follows from the previous parts of the proof and the Arzelà-Ascoli theorem that equicontinuity of $\mathcal{M}_{s,\nu}$
	holds either for every or none $\tau_v$-compatible metric $d_v$. Thanks to this observation, we make the following convenient choice for $d_v$:
	\begin{equation*}
		d_v(\zeta_1,\zeta_2) := \sum_{l \geq 1}2^{-l}C_l^{-1}\bigg[\bigg|\int_{\mathbb{R}^d} f_l d\zeta_1 - \int_{\mathbb{R}^d} f_l d\zeta_2\bigg|\wedge 1\bigg],
	\end{equation*}
	where $\{f_l, l\in \mathbb{N}\}=:\mathcal{F} \subseteq C_c^2(\mathbb{R}^d)$ is arbitrary but fixed and consists of nontrivial elements such that the closure of $\mathcal{F}$ with respect to uniform convergence is $C_c(\mathbb{R}^d)$. Moreover, we set
	$$C_l:=1+D_l, \quad D_l:=(d^2+d)\underset{1\leq i,j \leq d}{\max} \{||\partial_if_l||_{\infty},||\partial_{ij}f_l||_{\infty}\}.$$
	By the choice of $\mathcal{F}$, $d_v$ is $\tau_v$-compatible. We obtain for each $\mu = (\mu_t)_{t \in [s,T]} \in \mathcal{M}_{s,\nu}$ and arbitrary $s\leq t_1\leq t_2 \leq T$:
	\begin{align}\label{1_calc1}
		\notag d_v(\mu_{t_1},\mu_{t_2}) &\leq  \sum_{l \geq 1}2^{-l}C_l^{-1}\bigg[\int_{t_1}^{t_2}\int_{\mathbb{R}^d}|\mathcal{L}_tf_l|d\mu_tdt \wedge 1 \bigg] \\& \leq \sum_{l \geq 1}2^{-l}C_l^{-1}D_l\bigg[\int_{t_1}^{t_2}\max_{1\leq i,j \leq d}\sup_{x \in K_l}\big(|a_{ij}(t,x)|+|b_i(t,x)|\big)dt\wedge 1\bigg] \\& \notag
		\leq \sum_{l \geq 1}2^{-l}\bigg[\int_{t_1}^{t_2}\max_{1\leq i,j \leq d}\sup_{x \in K_l}\big(|a_{ij}(t,x)|+|b_i(t,x)|\big)dt\wedge 1\bigg],
	\end{align}
	with $K_l := \supp f_l$. Hence, using (A1.i), for any $\varepsilon >0$ and $L \geq 1$, there is $\delta >0$ independent of $\mu \in \mathcal{M}_{s,\nu}$ such that
	$$t_1,t_2 \in [s,T], \,|t_1-t_2| \leq \delta \implies \max_{l \leq L}\int_{t_1\wedge t_2}^{t_1\vee t_2}\max_{1\leq i,j \leq d}\underset{x \in K_l}{\text{sup}}\big(|a_{ij}(t,x)|+|b_i(t,x)|\big)dt < \varepsilon/2.$$
	Consequently, $\mathcal{M}_{s,\nu}$ is (uniformly) equicontinuous with respect to any $\tau_v$-compatible metric on $\mathcal{SP}$, which completes the proof. 
\end{proof}
\begin{rem}\label{Rem_union-precompt_as-well}
	The estimates \eqref{1_calc1} are independent of the initial measure $\nu$, so we obtain even relative compactness of $\cup_{\nu \in \mathcal{SP}}\mathcal{M}_{s,\nu}$.
\end{rem}
\begin{rem}\label{1_rem_aux1}
	Proposition \ref{Prop_entire-flow-SP} applies in the situation of \cite[Thm.6.7.3]{FPKE-book15}. Indeed, in that case our Assumption A1 holds with $c \equiv 0$ from \cite{FPKE-book15}, and elements $ \mu \in \mathcal{M}_{\nu}$ as in \cite[Thm.6.7.3]{FPKE-book15} are curves of subprobability measures $dt$-a.s., if the initial datum $\nu$ is in $\mathcal{SP}$ (note that the existence result of the cited source is stated for $\nu \in \mathcal{P}$ only, but obviously extends to $\nu \in \mathcal{SP}$). For each such solution curve $\mu$, Lemma \ref{Lem_cont-wlog} (applied to the case of linear equations) gives a vaguely continuous version, which shows that each initial condition is admissible. 
\end{rem}

\subsubsection{Entire probability flows}\label{Subsubsection-prob-flow-lin}
Here we suppose the Borel coefficients $a_{ij},b_i: (0,T)\times \mathbb{R}^d\to \mathbb{R}$, $1 \leq i,j \leq d$, satisfy \\
\\
\textbf{Assumption A2.}
\begin{enumerate}
	\item [(A2.i)] $\int_{0}^{T}\sup_{x \in \mathbb{R}^d}\big(|a_{ij}(t,x)|+|b_i(t,x)|\big)dt < \infty$.
	\item [(A2.ii)] $x \mapsto a_{ij}(t,x)$ and $x \mapsto b_i(t,x)$ are continuous for $dt$-a.a. $t \in (0,T)$.
\end{enumerate}
In this case, for $\nu \in \mathcal{P}$ any solution $\mu \in \mathcal{M}_{s,\nu}$ extends to a curve $(\mu_t)_{t \in [s,T]}\subseteq \mathcal{P}$, i.e. in particular $\mathcal{M}_{s,\nu} = \mathcal{M}^1_{s,\nu}$. Indeed, considering \eqref{Sol-eq_NLFPKE_cont} for $\mu$ and a nonnegative sequence $\{\varphi_n\}_{n \geq 1} \subseteq C^2_c(\mathbb{R}^d)$, which increases pointwise to $1$ such that $\sup_{n \geq 1}||\varphi_n||_{C^2_b} < \infty$ and $\varphi_n =1$ on $(-n,n)$, we obtain $\mu_t(\mathbb{R}^d) = \nu(\mathbb{R}^d)$ for any $t \in [s,T]$. In particular, $t\mapsto \mu_t$ is weakly continuous.

Again, we consider $(\mathcal{H},\tau) = (\mathcal{SP},\tau_v)$ and $C([s,T],\mathcal{SP})$ instead of $C_{s,T}\mathcal{SP}$. We select from the sets
\begin{equation}\label{Def A}
	\mathcal{A}_{s,\nu} := \begin{cases}
		\mathcal{M}_{s,\nu}&, \text{ if }\nu \in \mathcal{P} \\
		\emptyset&, \text{ if } \nu \in \mathcal{SP}\backslash \mathcal{P}.
	\end{cases}
\end{equation}

\begin{prop}\label{Prop_entire_prob}
	Suppose that Assumption A2 is fulfilled and that $\mathcal{M}_{s,\nu} = \mathcal{M}_{s,\nu}^1$ is nonempty for each $(s,\nu) \in [0,T)\times \mathcal{P}$. Then there exists an entire probability flow for \eqref{NLFPKE_Intro}-\eqref{Initial-cond_Intro}.
\end{prop}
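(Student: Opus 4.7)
The plan is to apply Theorem \ref{Thm-1} with $(\mathcal{H},\tau) = (\mathcal{SP}, \tau_v)$ and the family $\{\mathcal{A}_{s,\nu}\}$ from \eqref{Def A}. Flow-admissibility of this family is exactly Example \ref{Example_flow-ad families}(ii): since every $\mu \in \mathcal{M}^1_{s,\nu}$ takes values in $\mathcal{P}$, the restriction $(\mu_t)_{t \in (r,T)}$ has initial datum $\mu_r \in \mathcal{P}$, so it lies in $\mathcal{A}_{r,\mu_r} = \mathcal{M}^1_{r,\mu_r}$; the concatenation property is equally straightforward. By hypothesis, the admissible initial conditions are exactly $(s,\nu)$ with $\nu \in \mathcal{P}$.

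The content to verify is the compactness of $\mathcal{A}_{s,\nu}$ in $C_{s,T}\mathcal{SP}$ with the topology of pointwise convergence for each such $(s,\nu)$. The key observation is that Assumption A2 is strictly stronger than Assumption A1, since for any $K \subset\subset \mathbb{R}^d$ one has $\sup_{x \in K} \leq \sup_{x \in \mathbb{R}^d}$. Hence Lemma \ref{Lem_compactness_A1-case} applies and gives that $\mathcal{M}_{s,\nu}$ is compact in $C([s,T],\mathcal{SP})$ with respect to the topology of uniform convergence.

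It remains to identify $\mathcal{A}_{s,\nu} = \mathcal{M}^1_{s,\nu}$ with $\mathcal{M}_{s,\nu}$ for $\nu \in \mathcal{P}$. This is precisely the mass-conservation observation made in the paragraph preceding the proposition: choosing a sequence $\{\varphi_n\} \subseteq C^2_c(\mathbb{R}^d)$ with $\varphi_n \uparrow 1$ and $\sup_n \|\varphi_n\|_{C^2_b} < \infty$, testing \eqref{Sol-eq_NLFPKE_cont} against $\varphi_n$, and passing to the limit using dominated convergence (justified by (A2.i)) yields $\mu_t(\mathbb{R}^d) = \nu(\mathbb{R}^d) = 1$ for every $\mu \in \mathcal{M}_{s,\nu}$ and every $t \in [s,T]$. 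Therefore $\mathcal{M}_{s,\nu} = \mathcal{M}^1_{s,\nu}$ whenever $\nu \in \mathcal{P}$, so $\mathcal{A}_{s,\nu}$ is compact in $C([s,T],\mathcal{SP})$ with the uniform topology, and a fortiori in $C_{s,T}\mathcal{SP}$ with the weaker topology of pointwise convergence.

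With these ingredients in hand, Theorem \ref{Thm-1} directly produces the desired entire probability flow. I do not expect a genuine obstacle here: the argument is essentially a rerun of Proposition \ref{Prop_entire-flow-SP}, the sole additional input being the global-in-space integrability in (A2.i), which upgrades the bookkeeping in Lemma \ref{Lem_compactness_A1-case} to one that also yields conservation of total mass and thus the equality $\mathcal{M}_{s,\nu} = \mathcal{M}^1_{s,\nu}$ for probability initial data.
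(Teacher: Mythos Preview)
Your proof is correct and follows essentially the same route as the paper: verify flow-admissibility of the family \eqref{Def A} via the identity $\mathcal{M}_{s,\nu}=\mathcal{M}^1_{s,\nu}$ for $\nu\in\mathcal{P}$, obtain compactness by noting that Assumption A2 implies Assumption A1 so that Lemma \ref{Lem_compactness_A1-case} applies, and conclude with Theorem \ref{Thm-1}. You simply spell out in more detail the mass-conservation argument that the paper records just before the proposition.
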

\begin{proof}
	Since $\mathcal{M}_{s,\nu} = \mathcal{M}_{s,\nu}^1$ for $\nu \in \mathcal{P}$, the family in (\ref{Def A}) is flow-admissible. Compactness of $\mathcal{A}_{s,\nu} \subseteq C([s,T],\mathcal{SP})$ follows as in the proof of Proposition \ref{Prop_entire-flow-SP}. Hence, Theorem \ref{Thm-1} applies and gives the assertion.
\end{proof}
In \cite{Rehmeier_Flow-JEE}, Proposition \ref{Prop_entire_prob} was proven via the connection of \eqref{NLFPKE_Intro}-\eqref{Initial-cond_Intro} to the associated martingale problem, under the slightly stronger assumption that $a$ and $b$ are bounded on $(0,T)\times \mathbb{R}^d$.

\subsubsection{Flows subject to Lyapunov functions}
Suppose $a$ and $b$ are Borel maps and fulfill Assumption A1. In order to select a flow of probability solutions in the case of unbounded coefficients, a control on $a$ and $b$ via \textit{Lyapunov functions} turns out to be helpful. Let $\psi \in C^2(\mathbb{R}^d)$ be nonnegative and \textit{compact}, i.e. $\{\psi \leq c\} \subset \subset \mathbb{R}^d$ for any $c \geq0$. Assume there is $C>0$ such that
\begin{equation}\label{Lypunov ineq.}
	\mathcal{L}_t\psi(x) \leq C+C\psi(x)\,\,\,\, dxdt\text{-a.s. in }(0,T)\times \mathbb{R}^d.
\end{equation} 
Such $\psi$ is often called a \textit{Lyapunov function}. For a Lyapunov function $\psi$, we set
$$\mathcal{P}_\psi := \{\zeta \in \mathcal{P}: \psi \in L^1(\mathbb{R}^d;\zeta)\}.$$
As a simple consequence of \cite[Lemma 2.2.]{BRDP08}, we have: for any initial condition $(s,\nu)\in [0,T)\times \mathcal{P}_\psi$ and each $t \in [s,T)$, there is a number $0<C(s,\nu,t,\psi) < \infty$ such that 
\begin{equation}\label{Tightness_Lyapunov-fct}
	\sup_{\mu \in \mathcal{M}^1_{s,\nu}}\int_{\mathbb{R}^d}\psi \,d\mu_t \leq C(s,\nu,t,\psi).
\end{equation}
We aim to select a flow from the sets
\begin{equation}\label{family of sets}
	\mathcal{A}_{s,\nu} = 
	\begin{cases}
		\mathcal{M}^1_{s,\nu}&, \text{ if }\nu \in \mathcal{P}_{\psi}\\
		\emptyset&, \text{ else}
	\end{cases},
\end{equation}
which is flow-admissible by \eqref{Tightness_Lyapunov-fct}.
With regard to Theorem \ref{Thm-1}, we let $(\mathcal{H},\tau) = (\mathcal{P},\tau_w)$.
\begin{prop}\label{Lyap flow prop}
	Suppose $a$ and $b$ fulfill Assumption A1 and there exists a nonnegative compact function $\psi$ such that (\ref{Lypunov ineq.}) holds. If the sets $\mathcal{M}^1_{s,\nu}$ are nonempty for each initial condition $(s,\nu) \in [0,T)\times \mathcal{P}_\psi$, then there exists a flow of probability solutions with respect to the family defined in (\ref{family of sets}).
\end{prop}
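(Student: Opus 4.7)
The strategy is to apply Theorem \ref{Thm-1} with $(\mathcal{H},\tau)=(\mathcal{P},\tau_w)$. This choice fits the abstract setup: $\mathcal{P}\subseteq\mathcal{SP}$, $(\mathcal{P},\tau_w)$ is Hausdorff, and $\tau_v\subseteq\tau_w$ on $\mathcal{P}$. It therefore suffices to (i) verify flow-admissibility of the family $\{\mathcal{A}_{s,\nu}\}$ in \eqref{family of sets}, and (ii) establish that each nonempty $\mathcal{A}_{s,\nu}=\mathcal{M}^1_{s,\nu}$ is compact in $C_{s,T}(\mathcal{P},\tau_w)$ with the topology of pointwise convergence.

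For (i), take $\mu\in\mathcal{A}_{s,\nu}$ with $\nu\in\mathcal{P}_\psi$ and $r\in(s,T)$. The restriction $(\mu_t)_{t\in(r,T)}$ is clearly a probability solution with initial datum $\mu_r$, and the Lyapunov estimate \eqref{Tightness_Lyapunov-fct} gives $\int\psi\,d\mu_r\le C(s,\nu,r,\psi)<\infty$, so $\mu_r\in\mathcal{P}_\psi$ and the restriction belongs to $\mathcal{A}_{r,\mu_r}$. Property (b) of Definition \ref{Def_flow-adm} follows by observing that the concatenation $\mu\circ_r\eta$ is weakly continuous in $t$ (since $\eta_r=\mu_r$), satisfies the distributional form \eqref{Sol-eq_NLFPKE_cont} of the FPK equation on $(s,T)$ by additivity of the time integral, and has $\int\psi\,d(\mu\circ_r\eta)_t<\infty$ for every $t$ via the corresponding bounds for $\mu$ and $\eta$.

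For (ii), the plan is to bootstrap from the subprobability compactness already proved in Lemma \ref{Lem_compactness_A1-case}, which applies since Assumption A1 holds. That lemma yields compactness of $\mathcal{M}_{s,\nu}$ in $C([s,T],(\mathcal{SP},\tau_v))$ with the topology of uniform convergence. Given any sequence $\mu^{(n)}$ in $\mathcal{A}_{s,\nu}\subseteq\mathcal{M}_{s,\nu}$, I would extract a vaguely uniformly convergent subsequence with limit $\mu\in\mathcal{M}_{s,\nu}$. For each $t\in[s,T]$ the bound \eqref{Tightness_Lyapunov-fct} gives $\sup_n\int\psi\,d\mu^{(n)}_t<\infty$; since $\psi$ is a compact function, Markov's inequality makes $\{\mu^{(n)}_t\}_n$ tight. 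Tightness together with vague convergence forces $\mu_t\in\mathcal{P}$ (no mass is lost) and promotes the convergence $\mu^{(n)}_t\to\mu_t$ to weak convergence. By Fatou, $\int\psi\,d\mu_t<\infty$, hence $\mu_t\in\mathcal{P}_\psi$ for every $t$, so $\mu\in\mathcal{A}_{s,\nu}$. This gives compactness in $C_{s,T}(\mathcal{P},\tau_w)$ with pointwise topology, and an application of Theorem \ref{Thm-1} concludes the proof.

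The main obstacle is exactly this upgrade from the vague to the weak topology: in contrast to the situations of Propositions \ref{Prop_entire-flow-SP} and \ref{Prop_entire_prob}, here the coefficients need not be globally bounded and mass could in principle escape to infinity along a vaguely convergent subsequence. The Lyapunov inequality \eqref{Lypunov ineq.} is precisely what rules this out, both by ensuring uniform tightness of the time slices and by being preserved under the limit via Fatou's lemma, so that the limit stays in the restricted class $\mathcal{P}_\psi$.
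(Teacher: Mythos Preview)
Your proof is correct and follows essentially the same route as the paper: both apply Theorem \ref{Thm-1} with $(\mathcal{H},\tau)=(\mathcal{P},\tau_w)$ and obtain compactness of $\mathcal{M}^1_{s,\nu}$ from the Lyapunov tightness bound \eqref{Tightness_Lyapunov-fct} combined with the Assumption A1--based arguments of Lemma \ref{Lem_compactness_A1-case}. The only organizational difference is that you invoke Lemma \ref{Lem_compactness_A1-case} as a black box and then show $\mathcal{M}^1_{s,\nu}$ is a closed subset of the compact set $\mathcal{M}_{s,\nu}$, whereas the paper re-applies Arzel\`a--Ascoli directly to $\mathcal{M}^1_{s,\nu}$ in $C([s,T),\mathcal{P})$, noting that $d_v$ restricted to $\mathcal{P}$ metrizes $\tau_w$ (so that the equicontinuity argument from Lemma \ref{Lem_compactness_A1-case} carries over verbatim).
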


\begin{proof}
	By Theorem \ref{Thm-1}, for each $\nu \in \mathcal{P}_{\psi}$ it suffices to show compactness of $\mathcal{A}_{s,\nu}$ in $C([s,T),\mathcal{P})$ (with the compact-open topology). To this end, we again evoke Proposition \ref{AA}. Due to \eqref{Tightness_Lyapunov-fct}, $\{\mu_t: \mu \in \mathcal{M}^1_{s,\nu}\}$ is tight, hence relatively compact in $(\mathcal{P},\tau_w)$ for each $t \in [s,T)$. Concerning closedness, it follows as in the proof of Lemma \ref{Lem_compactness_A1-case} that the limit of any converging sequence in $\mathcal{M}^1_{s,\nu}$ belongs to $\mathcal{M}^1_{s,\nu}$ as well. Finally, (uniform) equicontinuity of $\mathcal{M}^1_{s,\nu}$ can be proven exactly as in the proof of Lemma \ref{Lem_compactness_A1-case}. Indeed, the metric $d_v$ used in that proof restricted to $\mathcal{P}$ metrizes $\tau_w$, since $\tau_w$ coincides with $\tau_v$ restricted to $\mathcal{P}$. 
\end{proof}

\begin{rem}
	Consider the situation of \cite[Theorem 3.1]{BRDP08} and assume additionally that $b_i$, $1\leq i \leq d$, satisfies Assumption A1. In this case, $a_{ij}$, $1 \leq i,j \leq d$, has a version which fulfills Assumption A1. Indeed, by \cite[(C1),(C2)]{BRDP08} and Sobolev embedding, each $a(t,\cdot)$ has a Hölder continuous version, which is bounded on balls in $\mathbb{R}^d$, independently of $t$. Then, for any $(s,\nu) \in [0,T)\times \mathcal{P}$, \cite[Thm.3.1]{BRDP08} yields the existence of a weakly continuous probability solution $(\mu^{s,\nu}_t)_{t \in [s,T)}$, so that Proposition \ref{Lyap flow prop} applies.
\end{rem}

\subsection{Nonlinear equations}\label{Subsection_nonlin-appl}
We turn to nonlinear equations of type \eqref{NLFPKE_Intro}-\eqref{Initial-cond_Intro}. We consider two prototypes of equations: First, we treat the case of coefficients $(t,\zeta,x)\mapsto a(t,\zeta,x)$ and $(t,\zeta,x)\mapsto b(t,\zeta,x)$ which are continuous in $\zeta$ with respect to the vague (or weak) topology on $\mathcal{SP}$. In this case, $(a_{ij}(t,x))_{i,j \leq d}$ can be be degenerate. Secondly, we treat the case where $b(t,\zeta,x)$ depends on $\zeta$ via its density at $x$, i.e. $b$ is of \textit{Nemytskii-type}. In this case, $b$ is typically not continuous with respect to $\tau_v$ or $\tau_w$.
\subsubsection{Global dependence}
Set $\mathcal{S}_0 = \mathcal{SP}$ and consider $\mathcal{B}((0,T))\otimes \tau_v\otimes \mathcal{B}(\mathbb{R}^d)$-measurable coefficients $a_{ij}, b_i: (0,T)\times \mathcal{SP}\times \mathbb{R}^d \to \mathbb{R}$, such that the following assumption holds for each $1 \leq i,j \leq d$.
\\
\\
\textbf{Assumption B1.}
\begin{enumerate}
	\item[(B1.i)] $\int_{0}^{T}\sup_{(\zeta,x) \in \mathcal{SP}\times K}\big(|a_{ij}(t,\zeta,x)|+|b_i(t,\zeta,x)|\big)dt < \infty$ $\forall K \subset \subset \mathbb{R}^d$.
	\item [(B1.ii)] $x \mapsto a_{ij}(t,\zeta,x)$ and $b_i(t,\zeta,x)$ are continuous for each $\zeta \in \mathcal{SP}$ and $dt$-a.a. $t\in(0,T)$.
	\item[(B1.iii)] If $\zeta_n \longrightarrow \zeta$ vaguely, then $a_{ij}(t,\zeta_n,x)\longrightarrow a_{ij}(t,\zeta,x)$ and $b_{i}(t,\zeta_n,x)\longrightarrow b_{i}(t,\zeta,x)$ locally uniformly in $x$ for each $t \in (0,T)$.
\end{enumerate}
(B1.i) and (B1.ii) are comparable to Assumption A1, and the strong additional assumption (B1.iii) is used to show closedness of $\mathcal{M}_{s,\nu}$, which is more delicate compared to the linear case. Set, for each $(s,\nu)\in [0,T)\times \mathcal{SP}$,
\begin{equation}\label{Choice-H-and-A_first-nonlin}
	(\mathcal{H},\tau) = (\mathcal{SP},\tau_v)\text{ and }\mathcal{A}_{s,\nu} = \mathcal{M}_{s,\nu}.
\end{equation}
As in the linear case, solutions extend from $(s,T)$ to $[s,T]$ and we replace $C_{s,T}\mathcal{SP}$ by $C([s,T],\mathcal{SP})$. Since $a$ and $b$ are measurable with respect to $\tau_v$, one can always choose $\mu = \tilde{\mu}$ in Definition \eqref{Def_NLFPKE-sol}.
\begin{prop}\label{1_prop_NL_entire-SP}
	Suppose Assumption B1 is fulfilled and that $\mathcal{M}_{s,\nu}$ is nonempty for each $(s,\nu) \in [0,T)\times \mathcal{SP}$. Then there exists an entire subprobability flow $\{\mu^{s,\nu}\}_{(s,\nu) \in [0,T)\times \mathcal{SP}}$ for \eqref{NLFPKE_Intro}-\eqref{Initial-cond_Intro}. 
\end{prop}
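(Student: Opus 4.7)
The plan is to follow the same template as the linear case (Proposition \ref{Prop_entire-flow-SP}) and reduce the statement to Theorem \ref{Thm-1} applied with the choices \eqref{Choice-H-and-A_first-nonlin}. Since by (B1.i) every solution admits a vaguely continuous extension to $[s,T]$ with value $\nu$ at time $s$ (via Lemma \ref{Lem_cont-wlog}), I would work throughout with $C([s,T],\mathcal{SP})$ endowed with the topology of uniform convergence, which is stronger than pointwise convergence. So it suffices to verify two things: that $\{\mathcal{M}_{s,\nu}\}$ is flow-admissible, and that each $\mathcal{M}_{s,\nu}$ is compact in $C([s,T],\mathcal{SP})$.

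Flow-admissibility is immediate from the weak formulation \eqref{Sol-eq_NLFPKE_cont}: restricting a solution on $(s,T)$ to $(r,T)$ gives a solution starting at $\mu_r$ (property (a)), and concatenations $\mu \circ_r \eta$ inherit \eqref{Sol-eq_NLFPKE_cont} on $(s,T)$ since the defining integral equation is additive in $t$ and the joined curve is still vaguely continuous at $r$ because $\eta_r = \mu_r$ (property (b)). For compactness, I invoke the Arzelà–Ascoli theorem (Proposition \ref{AA}). Pointwise relative compactness of $\{\mu_t: \mu \in \mathcal{M}_{s,\nu}\}$ is trivial because $(\mathcal{SP},\tau_v)$ is compact. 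Equicontinuity follows exactly as in the linear case: using the metric $d_v$ from the proof of Lemma \ref{Lem_compactness_A1-case} and invoking (B1.i) with the uniform supremum in $\zeta$, the estimate \eqref{1_calc1} carries over verbatim once $|a_{ij}(t,\tilde{\mu}_t,x)|$ and $|b_i(t,\tilde{\mu}_t,x)|$ are bounded above by $\sup_{\zeta \in \mathcal{SP}}(|a_{ij}(t,\zeta,x)|+|b_i(t,\zeta,x)|)$, which is integrable on compact sets in $x$ by (B1.i).

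The main obstacle is closedness of $\mathcal{M}_{s,\nu}$, because in contrast to the linear case the coefficients now depend on the solution and we must pass to the limit inside a nonlinear term. Let $\mu^{(n)} \to \mu$ uniformly in $C([s,T],\mathcal{SP})$ with $\mu^{(n)} \in \mathcal{M}_{s,\nu}$. Fix $\varphi \in C^2_c(\mathbb{R}^d)$ and $t \in [s,T]$. The left-hand side of \eqref{Sol-eq_NLFPKE_cont} converges by vague convergence of $\mu^{(n)}_t$ to $\mu_t$, and $\mu_s = \nu$. For the right-hand side, the key point is that for each $\tau \in (s,T)$ we have $\mu^{(n)}_\tau \to \mu_\tau$ vaguely, so by (B1.iii) the coefficients $a_{ij}(\tau,\mu^{(n)}_\tau,\cdot)$ and $b_i(\tau,\mu^{(n)}_\tau,\cdot)$ converge locally uniformly to $a_{ij}(\tau,\mu_\tau,\cdot)$ and $b_i(\tau,\mu_\tau,\cdot)$; combined with the vague convergence $\mu^{(n)}_\tau \to \mu_\tau$ and the fact that $\mathcal{L}_{\tau,\mu_\tau}\varphi$ is continuous with compact support (by (B1.ii) and compact support of $\varphi$), this yields
\begin{equation*}
\int_{\mathbb{R}^d} \mathcal{L}_{\tau,\mu^{(n)}_\tau}\varphi \, d\mu^{(n)}_\tau \xrightarrow[n\to\infty]{} \int_{\mathbb{R}^d} \mathcal{L}_{\tau,\mu_\tau}\varphi \, d\mu_\tau
\end{equation*}
for each fixed $\tau$. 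The time integral then converges by dominated convergence, with dominating function $C_\varphi \sup_{\zeta,\, x \in \supp\varphi}(|a_{ij}(\tau,\zeta,x)| + |b_i(\tau,\zeta,x)|)$, which is in $L^1(s,T)$ by (B1.i). Hence $\mu \in \mathcal{M}_{s,\nu}$ and closedness follows.

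Combining these three verifications, $\mathcal{M}_{s,\nu}$ is compact in the compact-open (equivalently, uniform convergence) topology on $C([s,T],\mathcal{SP})$, and since this topology is finer than pointwise convergence, Theorem \ref{Thm-1} applies and yields the desired entire subprobability flow. \qed
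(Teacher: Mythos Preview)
Your proof is correct and follows essentially the same approach as the paper: both apply Theorem \ref{Thm-1} with $(\mathcal{H},\tau)=(\mathcal{SP},\tau_v)$ and $\mathcal{A}_{s,\nu}=\mathcal{M}_{s,\nu}$, obtain relative compactness and equicontinuity exactly as in the linear case via (B1.i), and prove closedness by combining (B1.iii) (locally uniform convergence of the coefficients) with (B1.ii) and vague convergence to pass to the limit in the nonlinear integrand, then invoke dominated convergence in time via (B1.i). The paper phrases the closedness step through the dual pairing ${}_{C_c^*}\langle \mu_r^{(n)}, \mathcal{L}_{r,\mu^{(n)}_r}\varphi\rangle_{C_c}$, while you split the difference directly, but this is purely cosmetic.
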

\begin{proof}\label{Prop_global-nonlinear-case-1}
	Again, we evoke the Arzelà-Ascoli theorem \ref{AA} and appeal to Theorem \ref{Thm-1}. Relative compactness of $\{\mu_t: \mu \in \mathcal{M}_{s,\nu}\} \subseteq \mathcal{SP}$ is obvious, since $(\mathcal{SP},\tau_v)$ is compact.
	Equicontinuity of $\mathcal{M}_{s,\nu}$ can be proven as in the proof of Lemma \ref{Lem_compactness_A1-case}, using (B1.i) instead of (A1.i). Therefore, $\mathcal{M}_{s,\nu} \subseteq C([s,T],\mathcal{SP})$ is relatively compact.
	For closedness, assume $\mu^{(n)} = (\mu^{(n)}_t)_{t \in [s,T]}$ converges to $\mu = (\mu_t)_{t \in [s,T]}$ in $C([s,T],\mathcal{SP})$. We need to prove
	\begin{equation}\label{1_aux2}
		\int_{s}^{t}\int_{\mathbb{R}^d}\mathcal{L}_{r,\mu^{(n)}_r}\varphi \,d\mu^{(n)}_r dr \underset{n \to \infty}{\longrightarrow} \int_{s}^{t}\int_{\mathbb{R}^d}\mathcal{L}_{r,\mu_r}\varphi \,d\mu_rdr
	\end{equation}
	for each $\varphi \in C^\infty_c(\mathbb{R}^d)$ and $t \in (s,T)$. This can, for example, be realized by rewriting
	$$\int_{\mathbb{R}^d}\mathcal{L}_{r,\mu^{(n)}_r}\varphi \,d\mu^{(n)}_r = {}_{C_c^*}\big \langle \mu^{(n)}_r, \mathcal{L}_{r,\mu^{(n)}_r}\varphi\big \rangle_{C_c},$$
	where ${}_{C_c^*}\big \langle \mu,f \big \rangle_{C_c}$ denotes the dual pairing of $f \in (C_c(\mathbb{R}^d),||\cdot||_\infty)$ and a bounded Borel measure $\mu$. Since $\tau_v$ coincides with the $\text{weak-}^*$ topology on the topological dual space of $C_c(\mathbb{R}^d)$, and since assumptions (B1.ii) and (B1.iii) yield $\mathcal{L}_{r,\mu^{(n)}_r}\varphi \longrightarrow \mathcal{L}_{r,\mu_r}\varphi$ in $(C_c(\mathbb{R}^d),||\cdot||_\infty)$, it follows that
	$${}_{C_c^*}\big \langle \mu_r^{(n)}, \mathcal{L}_{r,\mu^{(n)}_r}\varphi\big \rangle_{C_c} \longrightarrow {}_{C_c^*}\big \langle \mu_r, \mathcal{L}_{r,\mu_r}\varphi\big \rangle_{C_c}.$$
	From here, \eqref{1_aux2} follows by (B1.i) and Lebesgue's dominated convergence theorem.
\end{proof}
We continue by showing that one can slightly weaken the restrictive assumption (B1.iii) in the case of \textit{globally} bounded coefficients. Suppose that, instead of Assumption B1, $a$ and $b$ fulfill
\\
\\
\textbf{Assumption B2.} 
\begin{enumerate}
	\item [(B2.i)] $(t,\zeta,x)\mapsto a_{ij}(t,\zeta,x)$ and $(t,\zeta,x)\mapsto b_i(t,\zeta,x)$ are bounded on $(0,T)\times \mathcal{SP}\times \mathbb{R}^d$.
	\item[(B2.ii)] $x \mapsto a_{ij}(t,\zeta,x)$ and $x \mapsto b_i(t,\zeta,x)$ are continuous for each $\zeta \in \mathcal{SP}$ and $dt$-a.a. $t \in (0,T)$.
	\item[(B2.iii)] If $\zeta_n \longrightarrow \zeta$ weakly in $\mathcal{SP}$, then $a_{ij}(t,\zeta_n,x)\longrightarrow a_{ij}(t,\zeta,x)$ and $b_{i}(t,\zeta_n,x)\longrightarrow b_{i}(t,\zeta,x)$ locally uniformly in $x\in \mathbb{R}^d$ for each $t \in (0,T)$. 
\end{enumerate}
\begin{exa}
	\begin{enumerate}
		\item [(i)] Assumption B2 is fulfilled in the situation of \cite[Thm.5.3]{CG19} with $\sigma \equiv 0$. However, our Assumption B2 is considerably weaker than the assumptions in \cite[Thm.5.4]{CG19}, which implies uniqueness of solutions to \eqref{NLFPKE_Intro}-\eqref{Initial-cond_Intro}. Indeed, in contrast to \cite[Thm.5.4]{CG19}, we do neither assume Lipschitz continuity nor $C^m$-regularity for $m>0$ for $a$ and $b$.
		\item [(ii)] In \cite{Scheutzow87}, examples for nonlinear FPK equations with nonunique solutions are provided in the case $a\equiv 0$ and with dependence of $b$ on $\zeta$ of type $b(\zeta) = \int hd\zeta$, even for $h \in C_c(\mathbb{R}^d)$. Then, Assumption B2 is fulfilled. From here, generalizations to nonzero (degenerate) diffusive terms $a$ can be obtained.
	\end{enumerate}
\end{exa}
We still consider the choices \eqref{Choice-H-and-A_first-nonlin} and obtain:

\begin{prop}\label{1_prop_Application-CG19}
	Suppose $a$ and $b$ fulfill Assumption B2. Then there exists an entire subprobability solution flow to \eqref{NLFPKE_Intro}-\eqref{Initial-cond_Intro}. 
\end{prop}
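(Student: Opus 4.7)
The plan is to invoke Theorem~\ref{Thm-1} with $(\mathcal{H},\tau)=(\mathcal{SP},\tau_v)$ and $\mathcal{A}_{s,\nu}=\mathcal{M}_{s,\nu}$, and to follow the scheme of the proof of Proposition~\ref{1_prop_NL_entire-SP}: by (B2.i) every solution extends uniquely to $[s,T]$, so one may work on $C([s,T],\mathcal{SP})$ with the uniform topology, and compactness of $\mathcal{M}_{s,\nu}$ there is established through the Arzel\`a--Ascoli theorem~\ref{AA}. Existence of a solution for each $(s,\nu)\in[0,T)\times\mathcal{SP}$ is furnished by results of the type of \cite[Thm.5.3]{CG19} cited just above. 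Pointwise relative compactness is free from the vague compactness of $\mathcal{SP}$, and equicontinuity follows exactly as in the proof of Lemma~\ref{Lem_compactness_A1-case}, with the global bound (B2.i) taking the role of (A1.i) and providing a modulus of continuity that is moreover uniform in $\nu$.

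The main obstacle is closedness of $\mathcal{M}_{s,\nu}$: hypothesis (B2.iii) asks for convergence $\zeta_n\to\zeta$ in $(\mathcal{SP},\tau_w)$ in order to upgrade the coefficients to local uniform convergence in $x$, whereas convergence in $C([s,T],\mathcal{SP})$ delivers only vague convergence of the time marginals. I would upgrade vague to weak convergence through two estimates, both exploiting (B2.i). First, mass conservation: inserting a sequence $\varphi_k\in C^2_c(\mathbb{R}^d)$ with $\varphi_k\nearrow 1$ and $\sup_k\|\varphi_k\|_{C^2_b}<\infty$ into \eqref{Sol-eq_NLFPKE_cont} and applying dominated convergence (permitted because $|\mathcal{L}_{r,\tilde{\mu}_r}\varphi_k|$ is bounded by a constant and tends to $0$ pointwise) gives $\mu_t(\mathbb{R}^d)=\nu(\mathbb{R}^d)$ for every $\mu\in\mathcal{M}_{s,\nu}$ and $t\in[s,T]$. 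Second, uniform tightness: choosing $\chi_R\in C^2_c(\mathbb{R}^d)$ with $\chi_R=1$ on $B_R$, $\supp\chi_R\subseteq B_{2R}$, $\|\partial_i\chi_R\|_\infty=O(1/R)$ and $\|\partial_{ij}\chi_R\|_\infty=O(1/R^2)$, and inserting $\chi_R$ into \eqref{Sol-eq_NLFPKE_cont} yields, thanks to (B2.i) and mass conservation,
\begin{equation*}
	\mu_t(B_{2R}^c)\;\leq\;\nu(\mathbb{R}^d)-\int_{\mathbb{R}^d}\chi_R\,d\mu_t\;\leq\;\nu(B_R^c)+\frac{C\,\nu(\mathbb{R}^d)(T-s)}{R},
\end{equation*}
which tends to $0$ as $R\to\infty$ uniformly in $t\in[s,T]$ and $\mu\in\mathcal{M}_{s,\nu}$.

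With these two ingredients, if $\mu^{(n)}\to\mu$ in $C([s,T],\mathcal{SP})$ with $\mu^{(n)}\in\mathcal{M}_{s,\nu}$, then for every $r\in[s,T]$ the family $(\mu^{(n)}_r)_n$ is tight and of constant total mass $\nu(\mathbb{R}^d)$, hence its vague limit $\mu_r$ still has mass $\nu(\mathbb{R}^d)$, which upgrades the vague convergence $\mu^{(n)}_r\to\mu_r$ to weak convergence by Portmanteau. Assumption (B2.iii) then produces $\mathcal{L}_{r,\mu^{(n)}_r}\varphi\to\mathcal{L}_{r,\mu_r}\varphi$ in $(C_c(\mathbb{R}^d),\|\cdot\|_\infty)$ for each $\varphi\in C^\infty_c(\mathbb{R}^d)$, and from here the passage to the limit in \eqref{Sol-eq_NLFPKE_cont} proceeds via the dual-pairing reformulation and dominated convergence (invoking (B2.i)) exactly as in the proof of Proposition~\ref{1_prop_NL_entire-SP}. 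This shows $\mu\in\mathcal{M}_{s,\nu}$, so $\mathcal{M}_{s,\nu}$ is compact in $C([s,T],\mathcal{SP})$, and Theorem~\ref{Thm-1} delivers the desired entire subprobability flow.
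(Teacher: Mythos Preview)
Your proof is correct and follows the same overall route as the paper: existence from \cite[Thm.5.3]{CG19}, relative compactness and equicontinuity as in Proposition~\ref{1_prop_NL_entire-SP}, and for closedness the key step of upgrading vague to weak convergence of the time marginals so that (B2.iii) becomes applicable.

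The only substantive difference lies in how tightness of $\{\mu_t:\mu\in\mathcal{M}_{s,\nu}\}$ is obtained. The paper packages this as a separate lemma (Lemma~\ref{1_lem_app_precompact screenshots}) proved via a Lyapunov function: for each $\nu\in\mathcal{SP}$ one constructs a nonnegative compact $V_\nu\in C^2(\mathbb{R}^d)$ with bounded first and second derivatives and $\int V_\nu\,d\nu<\infty$, and then (B2.i) yields a uniform bound on $\int V_\nu\,d\mu_t$ over all solutions. Your argument is more elementary: you first exploit (B2.i) to get exact mass conservation $\mu_t(\mathbb{R}^d)=\nu(\mathbb{R}^d)$, and then use scaled cutoffs $\chi_R$ with $O(1/R)$-derivatives to turn the FPK identity into the explicit tightness estimate $\mu_t(B_{2R}^c)\leq \nu(B_R^c)+C(T-s)/R$. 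Both arguments rest on the same structural input (global boundedness of $a,b$ and the FPK identity applied to suitable test functions); the Lyapunov route avoids invoking mass conservation, while your route gives a quantitative decay rate and is entirely self-contained.
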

\begin{proof}
	Comparing with the proof of Proposition \ref{1_prop_NL_entire-SP}, and since existence of solutions to the Cauchy problem follows, e.g., from \cite[Thm.5.3]{CG19}, it remains to prove closedness of $\mathcal{M}_{s,\nu}$ in $C([s,T],\mathcal{SP})$.
	Inspecting the proof of Proposition \ref{1_prop_NL_entire-SP}, (B1.iii) is only used in the case $\zeta_n = \mu^{(n)}_t$ and $\zeta = \mu_t$ such that $(\mu^{(n)}_t)_{t \in [s,T]} \in \mathcal{M}_{s,\nu}$ converges to $(\mu_t)_{t \in [s,T]}$ in $C([s,T],\mathcal{SP})$. In this situation, Lemma \ref{1_lem_app_precompact screenshots} implies tightness of $\{\mu^{(n)}_t\}_{n \in \mathbb{N}}$. Hence, the vague convergence $\mu^{(n)}_t \longrightarrow \mu_t$ is actually weak and (B2.iii) can be used to conclude the proof as done in the proof of Proposition \ref{1_prop_NL_entire-SP}.
\end{proof}
\begin{rem}\label{1_rem_sub_already_prob}
	Similarly as under Assumption A2 in the linear case, under Assumption B2 any solution with initial condition $\nu \in \mathcal{P}$ is a probability solution. Therefore, it is clear that in this case one can also select an entire probability flow (for example, by considering the restriction of the flow of Proposition \ref{1_prop_Application-CG19} to initial data $\nu \in \mathcal{P}$).
\end{rem}

\subsubsection{Nemytskii-type coefficients}
Here we let $\mathcal{S}_0 = \mathcal{SP}_{\ll}$ and consider, for $1 \leq i,j \leq d$, Borel coefficients 
\begin{equation*}
	a_{ij}: (0,T)\times \mathbb{R}^d \to \mathbb{R},\quad \tilde{b}_i: (0,T)\times \mathbb{R}\times \mathbb{R}^d \to \mathbb{R},
\end{equation*}
i.e. $a$ is independent of $\zeta \in \mathcal{SP}$ and $b$ acts on $(t,\zeta,x)$ via $b: (t,\zeta,x) \mapsto \tilde{b}(t,(d\zeta/dx)(x),x)$. To define $b$ pointwise, we agree to denote by $d\zeta /dx$ the version of the density of $\zeta$ with respect to $dx$ which is $0$ on the complement of its Lebesgue points. This choice renders $b$ $\mathcal{B}((0,T))\otimes \tau_v \otimes \mathcal{B}(\mathbb{R}^d)$-measurable. In this situation, solutions $(\mu_t)_{t \in (s,T)}$ to \eqref{NLFPKE_Intro}-\eqref{Initial-cond_Intro} are represented by densities $(\rho_t)_{t \in (s,T)}$, $\rho_t = d\mu_t /dx$, and \eqref{NLFPKE_Intro} turns into the nonlinear PDE
\begin{equation}\label{PDE-eq-Nemytskii}
	\partial_t \rho_t(\cdot) = \partial_{ij}\big((a_{ij}(t,\cdot)\rho_t(\cdot)\big)-\partial_i\big(b_i(t,\rho_t(\cdot),\cdot)\rho_t(\cdot)\big).
\end{equation}
We identify $(\mu_t)_{t \in (s,T)}$ and $(\rho_t)_{t \in (s,T)}$. We impose the following conditions on $a$ and $b$. 
\\
\\
\textbf{Assumption N1.} 
\begin{enumerate}
	\item [(N1.i)] $\sup_{t_1 \leq t \leq t_2}||a(t,\cdot)||_{W^{1,\infty}(B_R)} < \infty $ $\forall 0 < t_1 < t_2 < T, R>0$.
	\item[(N1.ii)] For any $0 < t_1 < t_2 < T, R>0$, there are constants $0 < \lambda_1 < \lambda_2$ depending on $t_1,t_2$ and $R$ such that $\lambda_1 |\xi|^2 \leq a_{ij}(t,x)\xi_i\xi_j \leq \lambda_2|\xi|^2$ for all $(t,x) \in [t_1,t_2]\times B_R$ and $\xi  = (\xi_1,\dots,\xi_d)\in \mathbb{R}^d$.
	\item[(N1.iii)] $r\mapsto \tilde{b}(t,r,x) \in C(\mathbb{R})$ $\forall (t,x) \in (0,T)\times \mathbb{R}^d$, and $||\tilde{b}||_{L^\infty(\mathcal{T}\times \mathbb{R}\times K)}<\infty$ $\forall \,\mathcal{T}\times K \subset \subset (0,T)\times\mathbb{R}^d$.
	\item[(N1.iv)] $\int_0^T \sup_{(r,x)\in \mathbb{R}\times B_R}\big(|a(t,x)|+|\tilde{b}(t,r,x)|\big)dt <\infty$ $\forall R>0.$
\end{enumerate}
Any solution $\mu= (\mu)_{t \in (s,T)} = (\rho_t)_{t \in (s,T)}$ to \eqref{NLFPKE_Intro}-\eqref{Initial-cond_Intro} is a solution to the \textit{linear} FPK equation with Borel coefficients $(t,x)\mapsto a(t,x)$ and $(t,x)\mapsto \tilde{b}(t,\rho_t(x),x)$. Due to (N1.i)-(N1.iii), \cite[Cor.6.4.3.]{FPKE-book15} applies to any such $(\mu_t)_{t \in (s,T)}$, any $p >d+2$ and $(t_1,t_2)\times B_R \subset \subset (0,T)\times \mathbb{R}^d$. Consequently, by \cite[Thm.6.2.2.]{FPKE-book15}, there is $\gamma>0$ and a $dxdt$-version of $\rho$ (for simplicity again denoted $\rho$) such that $\mu= \rho \in C^\gamma_{\loc}((s,T),C^\gamma_{\loc}(\mathbb{R}^d))$. $\gamma$ does not depend on $\mu$ or $\nu$. In particular, we have
$$\mathcal{M}_{s,\nu} \subseteq C^\gamma_{\loc}((s,T),C^\gamma_{\loc}(\mathbb{R}^d))$$
for each $\nu \in \mathcal{SP}$.
With regard to Theorem \ref{Thm-1}, we choose $(\mathcal{H},\tau)$ as the nonnegative functions with $L^1(dx)$-norm less or equal to $1$ in $C^0_{\loc}(\mathbb{R}^d)$ (where $C^0_{\loc}(\mathbb{R}^d)$ denotes the metrizable topological space of continuous functions on $\mathbb{R}^d$ with the topology of locally uniform convergence, which is stronger than the vague topology). The key to flow selections to equation \eqref{PDE-eq-Nemytskii} is the following lemma. Note that it is valid for any initial condition $\nu \in \mathcal{SP}$, not only for $\nu \in \mathcal{SP}_{\ll}$.
\begin{lem}\label{Prop-key-Nemytskii}
	Let $C_{s,T}C^0_{\loc}(\mathbb{R}^d)$ be equipped with the topology of locally uniform convergence on $(s,T)$ in $C^0_{\loc}(\mathbb{R}^d)$, and let $(s,\nu) \in [0,T)\times \mathcal{SP}$. Then
	\begin{enumerate}
		\item [(i)] $\mathcal{M}_{s,\nu}$ is relatively compact in $C_{s,T}C^0_{\loc}(\mathbb{R}^d)$.
		\item[(ii)] $\mathcal{M}_{s,\nu}$ is closed in $C_{s,T}C^0_{\loc}(\mathbb{R}^d)$.
	\end{enumerate}
\end{lem}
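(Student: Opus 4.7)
The plan is to combine interior parabolic regularity with an Arzelà--Ascoli/diagonal argument for (i), and to pass to the limit in the weak formulation for (ii), absorbing the contribution of the initial layer via the global-in-time integrability (N1.iv).

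For (i), I would argue that $\mathcal{M}_{s,\nu}$ is uniformly bounded in $C^\gamma([t_1,t_2]\times \bar B_R)$ for every cylinder $(t_1,t_2)\times B_R \subset\subset (s,T)\times \mathbb{R}^d$. Any $\rho = (\rho_t) \in \mathcal{M}_{s,\nu}$ solves the \emph{linear} FPK equation with diffusion $a(t,x)$ and drift $\hat b(t,x):=\tilde b(t,\rho_t(x),x)$. By (N1.iii), $\|\hat b\|_{L^\infty((t_1,t_2)\times B_{R+1})} \leq \|\tilde b\|_{L^\infty((t_1,t_2)\times \mathbb{R}\times B_{R+1})}$, which is finite and \emph{independent} of $\rho$. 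Combined with (N1.i)--(N1.ii) and the subprobability bound $\|\rho_t\|_{L^1}\leq 1$, \cite[Cor.6.4.3]{FPKE-book15} yields a uniform $W^{1,p}$-bound of $\rho$ on a slightly smaller cylinder for some $p>d+2$; by \cite[Thm.6.2.2]{FPKE-book15} this translates into a uniform Hölder bound on $[t_1,t_2]\times \bar B_R$ with exponent $\gamma$ independent of $\rho$. The classical Arzelà--Ascoli theorem then gives relative compactness in $C([t_1,t_2]\times \bar B_R)$, and a standard diagonal extraction along an exhausting sequence of such cylinders upgrades this to relative compactness in $C_{s,T}C^0_{\loc}(\mathbb{R}^d)$.

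For (ii), let $\mu^{(n)}=(\rho^{(n)}_t)_{t\in(s,T)}\in\mathcal{M}_{s,\nu}$ converge to $\mu=(\rho_t)_{t\in(s,T)}$ in $C_{s,T}C^0_{\loc}(\mathbb{R}^d)$. Each $\rho_t$ is nonnegative, and Fatou yields $\int \rho_t\,dx\leq 1$, so $\mu_t\in\mathcal{SP}_{\ll}$ for every $t\in(s,T)$; vague continuity on $(s,T)$ follows by pairing against test functions in $C_c(\mathbb{R}^d)$. It remains to verify \eqref{Sol-eq_NLFPKE_cont}. Fix $\varphi\in C^2_c(\mathbb{R}^d)$ and $t\in(s,T)$; the identity holds for each $\mu^{(n)}$. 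The left-hand side converges to $\int\varphi\,d\mu_t-\int\varphi\,d\nu$ by locally uniform convergence at time $t$. For the right-hand side, split $\int_s^t=\int_s^{s+\varepsilon}+\int_{s+\varepsilon}^t$ for $\varepsilon>0$ small. On $[s+\varepsilon,t]$, locally uniform convergence of $\rho^{(n)}_\tau$ on $\supp\varphi$, together with continuity of $\tilde b(\tau,\cdot,x)$ in its second argument (N1.iii), gives pointwise convergence of $\mathcal{L}_{\tau,\mu^{(n)}_\tau}\varphi$ to $\mathcal{L}_{\tau,\mu_\tau}\varphi$ on $\supp\varphi$, uniformly in $x$; the time integral passes to the limit by dominated convergence. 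On $(s,s+\varepsilon)$, the integrand is majorized, uniformly in $n$ and for the limit $\mu$ as well, by
$$C_\varphi \sup_{x\in \supp\varphi,\,r\in\mathbb{R}}\bigl(|a(\tau,x)|+|\tilde b(\tau,r,x)|\bigr),$$
which lies in $L^1((0,T);d\tau)$ by (N1.iv); hence this contribution is $O(\varepsilon)$ uniformly. Sending $n\to\infty$ and then $\varepsilon\to 0$ yields \eqref{Sol-eq_NLFPKE_cont} for $\mu$. Condition (i) of Definition \ref{Def_NLFPKE-sol} for $\mu$ follows from the same majorant.

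The main obstacle I expect is the passage to the limit on the time interval near the initial time $s$: the Hölder regularity of solutions degenerates at $s$ and the convergence $\rho^{(n)}\to\rho$ is only locally uniform on the open interval $(s,T)$, so the nonlinear drift $\tilde b(\tau,\rho^{(n)}_\tau(x),x)$ is not controlled near $\tau=s$ by the convergence alone. This is resolved precisely by (N1.iii), which allows us to pass to the limit in the second argument of $\tilde b$ pointwise, and by the global-in-time integrability (N1.iv), which makes the initial-layer contribution small uniformly in $n$.
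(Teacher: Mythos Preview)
Your strategy is the same as the paper's for both parts, but there is one missing link in (i). The H\"older embedding in \cite[Thm.~6.2.2]{FPKE-book15} bounds the H\"older norm by the \emph{full} parabolic Sobolev norm $\|\mu\|_{\mathcal{H}^{p,1}(B_R,(t_1,t_2))}$, which includes $\|\partial_t\mu\|_{\mathbb{H}^{p,-1}}$; the regularity corollaries you cite only deliver a uniform $\mathbb{H}^{p,1}$-bound (on $\rho$ and $\nabla_x\rho$). The paper closes this gap explicitly by using the equation: writing $\partial_t\mu=\partial_{ij}(a_{ij}\mu)-\partial_i(\hat b_i\mu)$ in distributions and invoking (N1.i), (N1.iii), one gets $\|\partial_t\mu\|_{\mathbb{H}^{p,-1}}\leq C\|\mu\|_{\mathbb{H}^{p,1}}$ with $C$ depending only on the coefficient bounds, hence independent of the particular solution. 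With this line added, your argument is complete; the diagonal extraction you describe is just a hands-on version of the compact embedding $C^\gamma_{\loc}\hookrightarrow C^0_{\loc}$ that the paper invokes directly.

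For (ii) your $\varepsilon$-splitting is correct but unnecessary. Since for every fixed $\tau\in(s,T)$ one has $\int\mathcal{L}_{\tau,\mu^{(n)}_\tau}\varphi\,\rho^{(n)}_\tau\,dx\to\int\mathcal{L}_{\tau,\mu_\tau}\varphi\,\rho_\tau\,dx$ (by locally uniform convergence of $\rho^{(n)}_\tau$ on $\supp\varphi$ and (N1.iii)), and this quantity is dominated for all $n$ by the $L^1((s,T);d\tau)$ function coming from (N1.iv), the paper applies dominated convergence to $\int_s^t$ in a single step, without isolating an initial layer.
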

\begin{proof}
	\begin{enumerate}
		\item [(i)] By the compact embedding $C_{\loc}^\gamma((s,T),C^\gamma_{\loc}(\mathbb{R}^d)) \subseteq C_{s,T}C^0_{\loc}(\mathbb{R}^d)$, it suffices to prove
		\begin{equation}\label{Bound-Hldernorm}
			\sup_{\mu \in \mathcal{M}_{s,\nu}}||\mu||_{C^\gamma([t_1,t_2],C^\gamma(B_R))} < \infty \quad \forall s < t_1 < t_2 < T, R>0.
		\end{equation}
		To this end, we will use \cite[Thm.6.2.2.(ii)]{FPKE-book15} to obtain for any $p > d+2$
		\begin{equation}\label{one-eq}
			\sup_{\mu \in \mathcal{M}_{s,\nu}}||\mu||_{C^\gamma([t_1,t_2],C^\gamma(B_R))}\leq N_1(d,p,\gamma,t_1,t_2,R)||\mu||_{\mathcal{H}^{p,1}(B_R,(t_1,t_2))},
		\end{equation}
		where the finite constant $N_1$ depends only on the indicated quantities, i.e. in particular not on $\mu$. Moreover, for any $p > d+2$, by \cite[Cor.6.4.5.]{FPKE-book15}, and since each $\mu_t$ is a nonnegative subprobability measure, we have
		\begin{equation}\label{two-eq}
			\sup_{\mu \in \mathcal{M}_{s,\nu}}||\mu||_{\mathbb{H}^{p,1}(B_R,(t_1,t_2))} \leq N_2 < \infty,
		\end{equation}
		where $N_2$ depends on 
		$$d,p,t_1,t_2, \sup_{t \in [t_1,t_2]}||a(t,\cdot)||_{W^{1,\infty}(B_R)}, \sup_{(t,r,x) \in [t_1,t_2]\times\mathbb{R}\times B_R}|b(t,r,x)|$$
		and $\inf_{(t,x)\in [t_1,t_2]\times B_R}\det a(t,x)$ (which is strictly positive due to (N1.ii)), but not on $\mu$. Therefore, to obtain \eqref{Bound-Hldernorm}, it remains to prove
		\begin{equation}\label{eq-to-prove-H-mathcalH}
			||\mu||_{\mathcal{H}^{p,1}(B_R,(t_1,t_2))} \leq N_3 ||\mu||_{\mathbb{H}^{p,1}(B_R,(t_1,t_2))} \quad \forall s < t_1 < t_2 < T, R >0,
		\end{equation}
		for each $\mu \in \mathcal{M}_{s,\nu}$, for a finite constant $N_3$, which may depend on $t_1,t_2,R, p$, $d$ and the values of $a$ and $b$ considered above, but not on $\mu$. Recall the definition of the norm on $\mathcal{H}^{p,1}(B_R,(t_1,t_2))$, i.e.
		$$||\mu||_{\mathcal{H}^{p,1}(B_R,(t_1,t_2))} = ||\mu||_{\mathbb{H}^{p,1}(B_R,(t_1,t_2))}+||\partial_t\mu||_{\mathbb{H}^{p,-1}(B_R,(t_1,t_2))}.$$ Keeping $t_1,t_2,R$ fixed, we abbreviate the above norms in the previous line by $||\cdot||_{\mathcal{H}^{p,1}}$, $||\cdot||_{\mathbb{H}^{p,1}}$ and $||\cdot||_{\mathbb{H}^{p,-1}}$. We have
		\begin{align}\label{aux-eq_1}
			||\partial_t\mu||_{\mathbb{H}^{p,-1}} \leq ||\partial_t\mu - \partial_{ij}(a_{ij}\mu)||_{\mathbb{H}^{p,-1}} + ||\partial_{ij}(a_{ij}\mu)||_{\mathbb{H}^{p,-1}}.
		\end{align}
		Concerning the second summand, we find
		\begin{align*}
			||\partial_{ij}(a_{ij}\mu)||_{\mathbb{H}^{p,-1}} &= \sup_{h \in \mathbb{H}^{p',1}_0((B_R,(t_1,t_2))}-\int_{t_1}^{t_2}\int_{B_R}\partial_i(a_{ij}\rho) \partial_{j}h\,dxdt,
		\end{align*}
		and for each $h$ appearing in the above supremum, the integral term can be estimated as follows:
		\begin{align*}
			&-\int_{t_1}^{t_2}\int_{B_R}\partial_i(a_{ij}\rho)\partial_j h \,dx dt \leq
			\bigg(\int_{t_1}^{t_2}\int_{B_R}|\nabla h|^{p'}dxdt\bigg)^{1/{p'}}\\& \quad\quad\quad\quad\times \bigg[\bigg(\int_{t_1}^{t_2}\int_{B_R}|\rho \divv a|^pdxdt\bigg)^{1/p} + \bigg(\int_{t_1}^{t_2}\int_{B_R}|\nabla \rho\cdot a|^pdxdt\bigg)^{1/p}\bigg] \\& \leq
			2||h||_{\mathbb{H}^{p',1}}\cdot \sup_{t \in (t_1,t_2)}||a(t,\cdot)||_{W^{1,\infty}(B_R)}||\rho||_{\mathbb{H}^{p,1}}.
		\end{align*}
		This yields
		\begin{equation}\label{aux-eq_0}
			||\partial_{ij}(a_{ij}\mu)||_{\mathbb{H}^{p,-1}} \leq 2\sup_{t \in (t_1,t_2)}||a(t,\cdot)||_{W^{1,\infty}(B_R)}||\rho||_{\mathbb{H}^{p,1}}.
		\end{equation}
		We proceed with the first summand of \eqref{aux-eq_1}, noting that
		\begin{align*}
			||\partial_t\mu - \partial_{ij}(a_{ij}\mu)||_{\mathbb{H}^{p,-1}} = \sup_{h \in C^\infty_c((t_1,t_2)\times B_R)} \int_{t_1}^{t_2}\int_{B_R} - (\partial_th+a_{ij}\partial_{ij}h)\rho\, dx dt.
		\end{align*}
		For each $h \in C^\infty_c((t_1,t_2)\times B_R)$, the integral term on the right-hand side can further be treated as follows.
		\begin{align*}
			\int_{t_1}^{t_2}&\int_{B_R} - (\partial_th+a_{ij}\partial_{ij}h)\rho \,dx dt = \int_{t_1}^{t_2}\int_{B_R}\tilde{b}(t,\rho_t(x),x)\cdot \nabla h(t,\cdot)(x)\rho_t(x)dxdt \\&
			\leq \sup_{(t,r,x)\in (t_1,t_2)\times \mathbb{R}\times B_R}|\tilde{b}(t,r,x)|\bigg(\int_{t_1}^{t_2}||\rho_t(\cdot)||^p_{L^p(B_R)}dt\bigg)
			^{1/p}\bigg(\int_{t_1}^{t_2}||\nabla h(t,\cdot)||^{p'}_{L^{p'}(B_R)}dt\bigg)^{1/{p'}},
		\end{align*}
		where we have used that $(\mu_t)_{t \in (s,T)} = (\rho_t)_{t \in (s,T)}$ fulfills \eqref{Sol_time-space-formulation}. Hence, 
		\begin{equation}\label{aux-eq_2}
			||\partial_t\rho - \partial_{ij}(a_{ij}\rho)||_{\mathbb{H}^{p,-1}} \leq ||\tilde{b}||_{L^\infty((t_1,t_2)\times \mathbb{R}\times B_R)}||\rho||_{\mathbb{H}^{p,1}},
		\end{equation}
		and \eqref{aux-eq_1}-\eqref{aux-eq_2} yield 
		\begin{align*}
			||\mu||_{\mathcal{H}^{p,1}(B_R,(t_1,t_2))} \leq (1+2\sup_{t \in (t_1,t_2)}||a(t,\cdot)||_{W^{1,\infty}(B_R)}+||\tilde{b}||_{L^\infty((t_1,t_2)\times \mathbb{R}\times B_R)})||\mu||_{\mathbb{H}^{p,1}(B_R,(t_1,t_2))}.
		\end{align*}
		We have proved \eqref{eq-to-prove-H-mathcalH} with $N_3= 1+2\sup_{t \in (t_1,t_2)}||a(t,\cdot)||_{W^{1,\infty}(B_R)}+||\tilde{b}||_{L^\infty((t_1,t_2)\times \mathbb{R}\times B_R)}$, which is finite due to Assumption N1, and independent of $\mu$. Now \eqref{one-eq}-\eqref{eq-to-prove-H-mathcalH} imply \eqref{Bound-Hldernorm}, which completes the proof of (i).
		
		\item[(ii)] Let $\{\mu^{(n)}\}_{n \in \mathbb{N}}\subseteq \mathcal{M}_{s,\nu}$ with $\mu^{(n)} = (\mu^{(n)}_t)_{t \in (s,T)} = (\rho^{(n)}_t)_{t \in (s,T)}$ be convergent in $C_{s,T}C^0_{\loc}(\mathbb{R}^d)$ with limit $\rho$. Clearly, $\rho$ is nonnegative and $\rho(t,\cdot)$ is the density of a Borel subprobability measure $\mu_t$ for each $t$. By the continuity of $\rho$, $(t,x)\mapsto \tilde{b}(t,\rho_t(x),x)$ is Borel and $t \mapsto \mu_t$ is vaguely continuous. Assumption N1 entails (i) of Definition \ref{Def_NLFPKE-sol}. Concerning \eqref{Sol-eq_NLFPKE_cont}, for each $\varphi \in C^2_c(\mathbb{R}^d)$ and $(t,x)\in (s,T)\times\mathbb{R}^d$, we have
		$$\tilde{b}(t,\rho_t^{(n)}(x),x)\cdot \nabla\varphi(x)\longrightarrow \tilde{b}(t,\rho_t(x),x)\cdot \nabla \varphi(x),$$
		and, moreover, $\rho_t^{(n)}(\cdot) \longrightarrow \rho_t(\cdot)$ uniformly on $\supp \varphi$. Hence, by (N1.iii) and since $a$ does not depend on $\rho^{(n)}$,
		$$\int_{\mathbb{R}^d} \mathcal{L}_{t,\mu_t^{(n)}}\varphi(x)\rho_t^{(n)}(x)dx \longrightarrow \int_{\mathbb{R}^d} \mathcal{L}_{t,\mu_t}\varphi(x)\rho_t(x)dx.$$ By Lebesgue's dominated convergence and (N1.iv), we obtain 
		$$\int_s^T\int_{\mathbb{R}^d}\mathcal{L}_{t,\mu_t^{(n)}}\varphi(x)\rho_t^{(n)}(x)dxdt \longrightarrow \int_s^T\int_{\mathbb{R}^d}\mathcal{L}_{t,\mu_t}\varphi(x)\rho_t(x)dxdt.$$
		This proves that $(\mu_t)_{t \in (s,T)}$ belongs to $\mathcal{M}_{s,\nu}$. 
	\end{enumerate}
\end{proof}

The following corollary follows immediately from the previous lemma and Theorem \ref{Thm-1}.
\begin{kor}\label{Cor_Nemytskii-case}
	Let $\mathcal{C}_s \subseteq \mathcal{SP}$ for each $s \in [0,T)$ such that the family 
	\begin{equation*}
		\mathcal{A}_{s,\nu} := \begin{cases}
			\mathcal{M}_{s,\nu},& \text{ if }\nu \in \mathcal{C}_{s}\\
			\emptyset,& \text{ else }
		\end{cases}
	\end{equation*}
	is flow-admissible. Then, if the coefficients fulfill Assumption N1, there exists a flow to \eqref{PDE-eq-Nemytskii} with respect to $\{\mathcal{A}_{s,\nu}\}_{(s,\nu)\in [0,T)\times \mathcal{SP}}$.
\end{kor}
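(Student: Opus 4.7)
The plan is to obtain the corollary as a direct application of Theorem \ref{Thm-1}, using Lemma \ref{Prop-key-Nemytskii} as the sole nontrivial input. The topological space $(\mathcal{H},\tau)$ is already fixed in the paragraph preceding the lemma: $\mathcal{H}$ consists of nonnegative continuous functions $\rho$ on $\mathbb{R}^d$ with $\|\rho\|_{L^1(dx)}\leq 1$, identified with the corresponding element of $\mathcal{SP}_{\ll}$, and $\tau$ is the (metrizable) topology of locally uniform convergence inherited from $C^0_{\loc}(\mathbb{R}^d)$.

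Before invoking Theorem \ref{Thm-1}, I would verify that this $(\mathcal{H},\tau)$ meets its hypotheses. Hausdorffness is inherited from $C^0_{\loc}(\mathbb{R}^d)$. The inclusion $\mathcal{H}\subseteq \mathcal{SP}$ is immediate from the $L^1$-bound. For $\tau_v\subseteq \tau$, it suffices to note that if $\rho_n\to\rho$ locally uniformly and $\varphi\in C_c(\mathbb{R}^d)$, then $\int \varphi\rho_n\,dx\to \int \varphi\rho\,dx$ by uniform convergence on $\supp\varphi$, which is exactly vague convergence of the associated measures. Flow-admissibility of $\{\mathcal{A}_{s,\nu}\}$ is part of the assumption.

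It remains to check the compactness condition of Theorem \ref{Thm-1}: for every admissible $(s,\nu)$, i.e.\ every $\nu\in \mathcal{C}_s$, the set $\mathcal{A}_{s,\nu}=\mathcal{M}_{s,\nu}$ is compact in $C_{s,T}\mathcal{H}$ with respect to pointwise convergence. Parts (i) and (ii) of Lemma \ref{Prop-key-Nemytskii} give exactly that $\mathcal{M}_{s,\nu}$ is both relatively compact and closed in $C_{s,T}C^0_{\loc}(\mathbb{R}^d)$ equipped with the topology of locally uniform convergence on $(s,T)$, hence compact there. Since every $\mu\in\mathcal{M}_{s,\nu}$ is a curve of (sub)probability densities, this compactness is inherited in $C_{s,T}\mathcal{H}$. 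As the topology of locally uniform convergence on $(s,T)$ is stronger than that of pointwise convergence, compactness transfers to the weaker topology required by Theorem \ref{Thm-1}.

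With all hypotheses verified, Theorem \ref{Thm-1} supplies a flow $\{\mu^{s,\nu}\}$ with respect to $\{\mathcal{A}_{s,\nu}\}$, and the identification of $\mu_t$ with its density $\rho_t$ turns \eqref{NLFPKE_Intro} into \eqref{PDE-eq-Nemytskii}, giving the stated flow. I do not anticipate a genuine obstacle: the real work lies in Lemma \ref{Prop-key-Nemytskii}, and the remaining verifications are essentially bookkeeping about the chosen topology. The only minor point worth stating carefully is the relation $\tau_v\subseteq\tau$ together with the compatibility of the stronger compactness from Lemma \ref{Prop-key-Nemytskii} with the pointwise-convergence framework of Theorem \ref{Thm-1}.
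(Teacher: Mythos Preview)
Your proposal is correct and follows exactly the route the paper indicates: the corollary is obtained by feeding the compactness and closedness from Lemma \ref{Prop-key-Nemytskii} into Theorem \ref{Thm-1}, with the choice of $(\mathcal{H},\tau)$ already fixed in the text. The paper states only that the result ``follows immediately from the previous lemma and Theorem \ref{Thm-1}''; you have simply spelled out the routine verifications (Hausdorffness, $\tau_v\subseteq\tau$, passage from locally uniform to pointwise compactness) that the paper leaves implicit.
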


We close this section with several applications of the previous result. In comparison with the papers cited below, here we consider only the drift $b$ of Nemytskii-type, while the diffusion $a$ only depends on $(t,x) \in (0,T)\times \mathbb{R}^d$.

\paragraph{Flow selection under assumptions of \cite{BR18_2}}
Dropping the nonlinearity of $a$, in \cite{BR18_2} solutions to \eqref{PDE-eq-Nemytskii} are obtained for the case of time-independent coefficients under the following assumptions: 
$$a_{ij}\in C^2(\mathbb{R}^d)\cap C^1_b(\mathbb{R}^d), \,\sum_{i,j \leq d}a_{ij}\xi_i\xi_j \geq \gamma |\xi|^2 \,\,\forall \xi, x \in \mathbb{R}^d\text{ for }\gamma >0,$$
$$b \in C^1(\mathbb{R}\times \mathbb{R}^d)\cap L^\infty(\mathbb{R}\times \mathbb{R}^d).$$ Moreover, $b(0,x) = 0$ $\forall x \in \mathbb{R}^d$.  Obviously, these assumptions are stronger than Assumption N1. By \cite[Thm.3.4.]{BR18_2}, there is a solution to \eqref{PDE-eq-Nemytskii} for each initial condition $(s,\nu) \in [0,T)\times L^1(\mathbb{R}^d)$ and this solution family is a semigroup in $L^1(\mathbb{R}^d)$. In particular, since the coefficients here are time-independent, this family has the flow property, and its restriction to initial data from $\mathcal{SP}_{\ll}$ or $\mathcal{P}_{\ll}$is an entire (sub-) probability flow.

Without relying on the specific construction of solutions considered in \cite{BR18_2}, but only by the sheer existence result of \cite[Thm.3.4.]{BR18_2} restricted to initial data in $\mathcal{SP}_{\ll}$, we can apply Corollary \ref{Cor_Nemytskii-case} with $\mathcal{C}_s = \mathcal{SP}_{\ll}$ to also obtain the existence of a flow in $\mathcal{SP}_{\ll}$. Since any solution curve has constant total mass due to the boundedness of $a$ and $b$, its restriction to initial data in $\mathcal{P}_{\ll}$ is an entire probability flow.
It is not clear to us whether the semigroup constructed in \cite{BR18_2} is related to or coincides with the flow obtained by our techniques (of course they coincide in the case of well-posedness).

\paragraph{Flow selection for time-dependent coefficients as in \cite{BR21_NLFP-time-dep}}
In \cite{BR21_NLFP-time-dep}, time-dependent coefficients are considered. It is obvious that assumptions (H1)-(H2) of \cite{BR21_NLFP-time-dep} imply Assumption N1 (assuming $a$ depends only on $(t,x)$). By \cite[Rem.2.2]{BR21_NLFP-time-dep}, for any $(s,\nu) \in [0,T)\times \mathcal{SP}_{\ll}$, there is a solution to \eqref{PDE-eq-Nemytskii} with initial data $(s,\nu)$ (attained in the sense of weak convergence of measures). Hence, as in the previous paragraph, choosing $\mathcal{C}_s$ as $\mathcal{SP}_{\ll}$, Corollary \ref{Cor_Nemytskii-case} and Theorem \ref{Thm-1} yield flows for \eqref{PDE-eq-Nemytskii} in $\mathcal{SP}_{\ll}$ and, by restriction of such a flow to initial data being probability measures, in $\mathcal{P}_{\ll}$.

\paragraph{General measures as initial data (\cite{NLFPK-DDSDE5})}
Choosing $a = C\Id$ and $b(t,r,x) = D(x)\tilde{b}(r)$, we are in the setting of \cite{NLFPK-DDSDE5} (with $\beta(r) = Cr$). Clearly, assumptions (k)-(kkk) and (5.3) of \cite[Sect.5]{NLFPK-DDSDE5} imply Assumption N1. By Theorem 5.2., in this situation \eqref{PDE-eq-Nemytskii} has a solution for each finite Radon measure as initial datum. These solutions belong to $L^\infty((0,T),L^1)\cup \bigcap_{\delta >0}L^\infty((\delta,T)\times \mathbb{R}^d)$, preserve positivity and total mass, but it is not clear whether they are unique, see Remark 5.3. We apply Corollary \ref{Cor_Nemytskii-case} with $\mathcal{C}_s = \mathcal{SP}$ and obtain via Theorem \ref{Thm-1} an entire subprobability flow.  Its restriction to initial data in $\mathcal{P}$ is an entire probability flow, due to the boundedness of $a$ and $b$. In Remark 5.3., the authors of \cite{NLFPK-DDSDE5} state that it is not clear whether the family of their constructed solutions for every finite Radon measure $\nu$ has the flow property. With our methods, we obtain the existence of such a flow, at least in the class of nonnegative finite Radon measures. Indeed, since in the present case solutions preserve the total mass of their initial datum, it suffices to construct one flow in each of the disjoint "channels" $c\mathcal{P} = \{\eta: \eta = c\mu, \mu \in \mathcal{P}\}$, $c \geq 0$.

\section{Comparison to Markovian semigroups}\label{Section-Markov_semigroups}
Here we study cases of coefficients $a$ and $b$ independent of $\zeta \in \mathcal{SP}$ where the flows constructed in the proof of Theorem \ref{Thm-1} fulfill the Chapman-Kolmogorov equations \eqref{CK-eq}, suggesting that our selection on the level of the FPK equation seems to be the right analogue to the Markovian selections for martingale problems in \cite[Ch.12]{StroockVaradh2007}.

We consider bounded Borel coefficients $a$ and $b$ on $(0,T)\times \mathbb{R}^d$ such that $a$ and $b$ are continuous in $x$ for fixed $t$. In particular, by Remark \ref{1_rem_aux1} and Proposition \ref{Prop_entire_prob}, there is an entire probability flow to \eqref{NLFPKE_Intro}-\eqref{Initial-cond_Intro}. Note that the current assumptions coincide with those in \cite[Ch.12]{StroockVaradh2007} and \cite{Rehmeier_Flow-JEE}.

Subsequently we allow the initial time $s=T$ and set $\mathcal{M}^1_{T,\nu} = \{\nu\}$. If $\delta_x$ appears as an index, we simply write $x$, if no confusion will occur.

\begin{dfn}\label{Markov semigr}
	A family of solutions $\{\mu^{s,x}\}_{(s,x) \in [0,T]\times \mathbb{R}^d}$ with $\mu^{s,x} \in \mathcal{M}^1_{s,x}$ to \eqref{NLFPKE_Intro}-\eqref{Initial-cond_Intro} is a \textup{Markovian semigroup}, if for any $0 \leq s \leq t \leq T$, $y \mapsto \mu^{s,y}_t$ is $\mathcal{B}(\mathbb{R}^d)/\mathcal{B}(\mathcal{P})$-measurable and \eqref{CK-eq} holds. 
	
	For a Markovian semigroup, we define $\mu^{s,\nu} :=  \int_{\mathbb{R}^d} \mu^{s,y}d \nu(y)$ for any non-Dirac initial condition $\nu \in \mathcal{P}$ and call $\{\mu^{s,\nu}\}_{(s,\nu) \in [0,T]\times \mathcal{P}}$ the \textup{convex extension} of $\{\mu^{s,x}\}_{(s,x) \in [0,T]\times \mathbb{R}^d}$.
\end{dfn}
The following remark (which holds true also without the present continuity- and boundedness-assumptions on $a$ and $b$) shows that flows to \eqref{NLFPKE_Intro}-\eqref{Initial-cond_Intro} are more general than Markovian semigroups.
\begin{rem}\label{Rem_stable-convex}
	If $y \mapsto \mu^{s,y}$ is measurable for each $s$, where $\mu^{s,y}$ belongs to $\mathcal{M}^1_{s,\nu}$, then it is easy to see that the curve $t \mapsto \int_{\mathbb{R}^d} \mu^{s,y}_td\nu(y)$ belongs to $\mathcal{M}^{1}_{s,\nu}$ for any $\nu \in \mathcal{P}$. Moreover, the convex extension of a Markovian semigroup $\{\mu^{s,x}\}_{(s,x) \in [0,T]\times \mathbb{R}^d}$ is an entire probability flow, since we have
	\begin{equation*}
		\mu^{s,\nu}_t = \int_{\mathbb{R}^d} \mu^{s,y}_td\nu(y) = \int_{\mathbb{R}^d} \bigg(\int_{\mathbb{R}^d} \mu^{r,z}_t d\mu^{s,y}_r(z)\bigg)d \nu(y)=\int_{\mathbb{R}^d} \mu^{r,z}_t d\mu^{s,\nu}_r(z) = \mu^{r,\mu^{s,\nu}_r}_t.
	\end{equation*}
\end{rem}
The previous observation does not hold for coefficients depending on $\zeta \in \mathcal{P}$, since in that case solutions are not stable under convex combinations.

Next, we show that under the present assumptions on $a$ and $b$, both concepts coincide, at least for flows selected as in the proof of Theorem \ref{Thm-1}.

\begin{prop}\label{Prop Markov sg}
	If $a$ and $b$ are bounded and continuous in $x \in \mathbb{R}^d$, any entire probability flow $\{\mu^{s,\nu}\}_{(s,\nu) \in [0,T]\times \mathcal{P}}$ selected as in the proof of Theorem \ref{Thm-1} is the convex extension of a Markovian semigroup. In particular, the family $\{\mu^{s,x}\}_{(s,x) \in [0,T]\times \mathbb{R}^d}$ fulfills \eqref{CK-eq}.
\end{prop}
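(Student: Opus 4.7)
The plan is to reduce the proposition to a single \emph{convex extension identity}
$$\mu^{s,\nu}_t = \int_{\mathbb{R}^d} \mu^{s,y}_t \, d\nu(y), \qquad t \in [s,T],\ \nu \in \mathcal{P}.$$
Once this is available for every probability initial condition, applying it at $\nu = \mu^{s,x}_r$ and combining with the flow property at the intermediate time $r$ yields
$$\mu^{s,x}_t = \mu^{r,\mu^{s,x}_r}_t = \int_{\mathbb{R}^d}\mu^{r,y}_t \, d\mu^{s,x}_r(y),$$
which is \eqref{CK-eq}, and simultaneously exhibits the whole flow as the convex extension of its Dirac-indexed restriction.

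First, I would verify that $y \mapsto \mu^{s,y}$ is Borel measurable into $C([s,T],\mathcal{P})$. The compact-valued correspondence $y \mapsto \mathcal{M}^1_{s,y}$ has Borel graph by the closedness arguments underlying Proposition \ref{Prop_entire_prob}; the continuous functionals $G^{s,\nu}_k$ from the proof of Theorem \ref{Thm-1} do not depend on $\nu$, only on $s$; and each iterative maximization step preserves Borel measurability of both $y \mapsto u^{s,y}_k$ and the set-valued map $y \mapsto M^{s,y}_k$ (via the Berge maximum theorem and the Kuratowski--Ryll-Nardzewski selection theorem). Since $\bigcap_k M^{s,y}_k = \{\mu^{s,y}\}$ is a singleton, measurability of the selected curve in $y$ follows.

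Second, with measurability established, Remark \ref{Rem_stable-convex} ensures that $\bar\mu^\nu_t := \int \mu^{s,y}_t \, d\nu(y)$ defines an element $\bar\mu^\nu \in \mathcal{M}^1_{s,\nu}$. I would prove by induction on $k$ that
$$\bar\mu^\nu \in M^{s,\nu}_k \quad \text{and} \quad u^{s,\nu}_k = \int_{\mathbb{R}^d}u^{s,y}_k\, d\nu(y).$$
Linearity of $G^{s,\nu}_k$ in $\mu$ gives $G^{s,\nu}_k(\bar\mu^\nu) = \int u^{s,y}_k \, d\nu(y)$, reducing everything to the supremum identity. The inequality $u^{s,\nu}_k \geq \int u^{s,y}_k \, d\nu(y)$ is immediate from the inductive hypothesis $\bar\mu^\nu \in M^{s,\nu}_{k-1}$. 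The reverse inequality uses the superposition principle of Trevisan \cite{Trevisan16}: any $\eta \in M^{s,\nu}_{k-1} \subseteq \mathcal{M}^1_{s,\nu}$ lifts to a probability $P$ on path space solving the martingale problem with initial law $\nu$; the disintegration $P = \int P^y \, d\nu(y)$ with $P^y$ concentrated on paths from $y$ gives one-dimensional marginals $\eta^y \in \mathcal{M}^1_{s,y}$ with $\eta = \int \eta^y \, d\nu(y)$. The pointwise bound $G^{s,y}_j(\eta^y) \leq u^{s,y}_j$ together with the inductive identity $\int G^{s,y}_j(\eta^y) \, d\nu(y) = G^{s,\nu}_j(\eta) = u^{s,\nu}_j = \int u^{s,y}_j \, d\nu(y)$ for $j < k$ forces $\eta^y \in M^{s,y}_{k-1}$ for $\nu$-a.e.\ $y$, whence $G^{s,\nu}_k(\eta) = \int G^{s,y}_k(\eta^y) \, d\nu(y) \leq \int u^{s,y}_k \, d\nu(y)$. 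This closes the induction, and the characterization $\{\mu^{s,\nu}\} = \bigcap_k M^{s,\nu}_k$ then yields $\bar\mu^\nu = \mu^{s,\nu}$.

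I expect the main obstacle to be propagating the path-space disintegration through the nested levels $M^{s,\nu}_k$: transferring pointwise optimality from $\eta$ to $\nu$-almost every marginal $\eta^y$ hinges on Borel measurability of $y \mapsto u^{s,y}_k$, so the measurable-selection step of the first paragraph is essential and not merely bookkeeping. Conceptually, it is the combination of linearity of the equation, convexity of the solution set, and the superposition principle that enables the transfer between Dirac and arbitrary initial data, which is precisely why the statement is confined to the linear case.
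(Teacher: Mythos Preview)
Your proposal is correct and follows essentially the same route as the paper: reduce to the convex extension identity, establish Borel measurability of $y\mapsto\mu^{s,y}$ via measurable-selection machinery, and then run an induction along the levels $M^{s,\nu}_k$ using the superposition principle of \cite{Trevisan16} to disintegrate solutions over Dirac initial data. The only organizational difference is that the paper disintegrates the \emph{selected} solution $\mu^{s,\nu}=\int\mu^y\,d\nu(y)$ once and compares the components $\mu^y$ against $\mu^{s,y}$ level by level, whereas you disintegrate every competitor $\eta\in M^{s,\nu}_{k-1}$ to prove the stronger identity $u^{s,\nu}_k=\int u^{s,y}_k\,d\nu(y)$; both lead to the same conclusion with the same ingredients.
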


\begin{proof}
	Consider an entire probability flow $\{\mu^{s,\nu}\}_{(s,\nu) \in [0,T]\times \mathcal{P}}$ selected as in the proof of Theorem \ref{Thm-1}, with the topological choices made in Subsection \ref{Subsubsection-prob-flow-lin} (we set $\mu^{T,\nu} := \nu$). Since the measurability of $y \mapsto \mu_t^{s,y}$ follows from Lemma \ref{Lem Dirac-select is mable}, it remains to prove $\mu^{s,\nu}_t = \eta^{s,\nu}_t := \int \mu^{s,y}_t d\nu(y)$ for all $0 \leq s \leq t \leq T$ and $\nu \in \mathcal{P}$, since in this case, for all $0 \leq s \leq r \leq t \leq T$ and $\nu \in \mathcal{P}$, we have
	$$\mu^{s,x}_t = \mu^{r,\mu^{s,x}_r}_t = \int_{\mathbb{R}^d} \mu^{r,y}_t d\mu^{s,x}_r(y).$$
	By Remark \ref{Rem_stable-convex}  $\eta^{s,\nu}$ belongs to $\mathcal{M}^1_{s,\nu}$. Inspecting the proof of Theorem \ref{Thm-1} and using its notation, we have 
	\begin{equation}\label{geq1}
		\int_{\mathbb{R}^d} h_{n_{m^s_0}}d\mu^{s,\nu}_{q_{m^s_0}} \geq \int_{\mathbb{R}^d} h_{n_{m^s_0}}d\eta^{s,\nu}_{q_{m^s_0}}.
	\end{equation} Since Lemma \ref{Disint for any FPKE-sol} yields $\mu^{s,\nu} = \int \mu^y d\nu(y)$ for a family $\{\mu^y\}_{y \in \mathbb{R}^d}$ with $\mu^y \in \mathcal{M}^1_{s,y}$ $\nu$-a.s. such that $y \mapsto \mu^y_t$ is measurable for each $t\in [s,T]$ (we suppress the dependence of $\mu^y$ on $s$ in the notation), and since the selection method of each $\mu^{s,y}$ in $\mathcal{M}^1_{s,y}$ implies $\nu$-a.s.
	\begin{equation}\label{ineq y}
		\int_{\mathbb{R}^d} h_{n_{m^s_0}}d\mu^{s,y}_{q_{m^s_0}} \geq \int_{\mathbb{R}^d} h_{n_{m^s_0}}d\mu^{y}_{q_{m^s_0}},
	\end{equation}
	we deduce
	\begin{align}\label{leq1}
		\notag \int_{\mathbb{R}^d} h_{n_{m^s_0}}d\eta^{s,\nu}_{q_{m^s_0}} = \int_{\mathbb{R}^d}&\bigg( \int_{\mathbb{R}^d} h_{n_{m^s_0}}d\mu^{s,y}_{q_{m^s_0}}\bigg)d\nu(y)\\& \geq \int_{\mathbb{R}^d}\bigg( \int_{\mathbb{R}^d} h_{n_{m^s_0}}d\mu^{y}_{q_{m^s_0}}\bigg)d\nu(y) =  \int_{\mathbb{R}^d} h_{n_{m^s_0}}d\mu^{s,\nu}_{q_{m^s_0}}.
	\end{align}Hence, \eqref{geq1} is an equality. In turn, also \eqref{leq1} is a chain of equalities, which particularly gives that \eqref{ineq y} is an equality $\nu$-a.s.
	By iteration, \eqref{geq1} and \eqref{leq1} are equalities and \eqref{ineq y} is an equality $\nu$-a.s. for any index pair $(n_{m^s_k},q_{m^s_k})$, $k \in \mathbb{N}_0$. By continuity, we conclude $\mu^{s,\nu} = \eta^{s,\nu}$, which implies the claim.
\end{proof}
We conclude this section with a series of lemmas used for the previous proof.  We denote by $\pi_t$ the canonical projection $\pi_t : C([s,T],\mathbb{R}^d) \to \mathbb{R}^d$, $\pi_t(f) = f(t)$. The framework for the next results is contained in Appendix \ref{App_meas-select}.
\begin{lem}\label{1_lem_aux_meas1}
	The map $\mathcal{P}\ni \nu \mapsto \mathcal{M}_{s,\nu}$ is $\mathcal{B}(\mathcal{P})/\mathcal{B}(\comp(C([s,T],\mathcal{SP}))$-measurable.
\end{lem}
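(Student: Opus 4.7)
The plan is to exhibit the map $\nu \mapsto \mathcal{M}_{s,\nu}$ as a closed-graph, compact-valued multifunction and then invoke a standard measurability result for multifunctions (from Appendix B) to conclude Borel measurability with respect to the Hausdorff/Vietoris topology on $\comp(C([s,T],\mathcal{SP}))$. Since the coefficients are bounded and $x$-continuous, the linear analogue of Assumption A1 applies; in particular, by Lemma \ref{Lem_compactness_A1-case} each $\mathcal{M}_{s,\nu}$ is compact in $C([s,T],\mathcal{SP})$, and by Remark \ref{Rem_union-precompt_as-well} there is a single compact set $K \subseteq C([s,T],\mathcal{SP})$ containing every $\mathcal{M}_{s,\nu}$ as $\nu$ runs through $\mathcal{P}$. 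Thus the multifunction actually takes values in $\comp(K)$, where the Hausdorff metric is well-behaved.

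The core analytic step is to show that the graph
\[
\mathcal{G} := \{(\nu,\mu) \in \mathcal{P}\times K : \mu \in \mathcal{M}_{s,\nu}\}
\]
is closed in $\mathcal{P}\times K$. If $(\nu_n,\mu^{(n)}) \to (\nu,\mu)$ with $\mu^{(n)} \in \mathcal{M}_{s,\nu_n}$, then $\mu^{(n)}_s = \nu_n$ converges vaguely both to $\mu_s$ (by pointwise convergence) and to $\nu$, forcing $\mu_s = \nu$. For each $\varphi \in C^2_c(\mathbb{R}^d)$ and $t \in [s,T]$, one passes to the limit in
\[
\int_{\mathbb{R}^d}\varphi\,d\mu^{(n)}_t - \int_{\mathbb{R}^d}\varphi\,d\nu_n = \int_s^t\!\int_{\mathbb{R}^d}\mathcal{L}_\tau\varphi\,d\mu^{(n)}_\tau\,d\tau
\]
exactly as in the closedness argument in Lemma \ref{Lem_compactness_A1-case}: the continuity of $a,b$ in $x$ renders $\mathcal{L}_\tau\varphi \in C_c(\mathbb{R}^d)$ for a.e. $\tau$, and boundedness of $a,b$ supplies the dominating function for Lebesgue's theorem. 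Hence $\mu \in \mathcal{M}_{s,\nu}$ and $\mathcal{G}$ is closed.

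With $\mathcal{G}$ closed in the Polish space $\mathcal{P}\times K$ and the sections $\mathcal{M}_{s,\nu}$ compact, the classical measurable-selection machinery recalled in Appendix \ref{App_meas-select} applies: a compact-valued multifunction with closed graph into a Polish (or compact metric) space is Borel measurable in the sense that $\{\nu : \mathcal{M}_{s,\nu}\cap U \neq \emptyset\}$ is Borel for every open $U \subseteq C([s,T],\mathcal{SP})$, which is precisely Borel measurability into $\comp(C([s,T],\mathcal{SP}))$ with the Vietoris/Hausdorff Borel structure.

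I do not expect a serious obstacle beyond bookkeeping: the analytic substance — compactness of each $\mathcal{M}_{s,\nu}$, uniform relative compactness over $\nu$, and stability of the solution equation under joint convergence of $(\nu_n,\mu^{(n)})$ — is already contained in Section 4. The only point requiring care is to match the closed-graph/compact-valued criterion used here with the precise formulation of multifunction measurability invoked from Appendix B.
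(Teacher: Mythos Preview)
Your proposal is correct and follows essentially the same route as the paper: verify uniform relative compactness of $\bigcup_{\nu}\mathcal{M}_{s,\nu}$ (Remark~\ref{Rem_union-precompt_as-well}), show stability of solutions under joint convergence $(\nu_n,\mu^{(n)})\to(\nu,\mu)$ via the closedness argument of Lemma~\ref{Lem_compactness_A1-case}, and then invoke the measurability criterion from Appendix~\ref{App_meas-select}. The only cosmetic difference is that you phrase the stability step as ``closed graph'' whereas the paper directly checks the limit-point hypothesis of Lemma~\ref{app_gen_meas_selec_lem1}; as you yourself note, these coincide under the uniform compactness already established.
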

\begin{proof}
	We only need to verify that Lemma \ref{app_gen_meas_selec_lem1} applies. But from Remark \ref{Rem_union-precompt_as-well} and Subsection \ref{Subsubsection-prob-flow-lin} we know that under the present assumptions on $a$ and $b$, $\cup_{\nu \in \mathcal{P}}\mathcal{M}^1_{s,\nu}$ is relatively compact and that \eqref{NLFPKE_Intro} is stable with respect to limits in $C([s,T],\mathcal{SP})$. Also, a solution to \eqref{NLFPKE_Intro}-\eqref{Initial-cond_Intro} which is the limit of a sequence of solutions with initial conditions $(\nu_n)_{n \in \mathbb{N}}$ has initial condition $\nu$, if $\nu_n \to \nu$ in $\mathcal{P}$. Hence Lemma \ref{app_gen_meas_selec_lem1} applies and gives the claim.
\end{proof}
\begin{lem}\label{Lem Dirac-select is mable}
	For each $(s,y) \in [0,T]\times \mathbb{R}^d$, let $\mu^{s,y} \in \mathcal{M}^1_{s,y}$ denote the solution selected by the iterative selection method in the proof of Theorem \ref{Thm-1}. 
	The mapping $y \mapsto \mu^{s,y}$ is $\mathcal{B}(\mathbb{R}^d)/\mathcal{B}(C([s,T],\mathcal{P}))$-measurable. In particular, for each $t \in [s,T]$, $y \mapsto \mu^{s,y}_t$ is $\mathcal{B}(\mathbb{R}^d)/\mathcal{B}(\mathcal{P})$-measurable.
\end{lem}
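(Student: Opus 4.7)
\textit{Proof proposal.} The plan is to propagate Borel measurability through each step of the iterative selection defining $\mu^{s,y}$ in the proof of Theorem \ref{Thm-1}. Since $\mathbb{R}^d \ni y \mapsto \delta_y \in \mathcal{P}$ is continuous, Lemma \ref{1_lem_aux_meas1} yields that $y \mapsto \mathcal{M}^1_{s,y}$ is $\mathcal{B}(\mathbb{R}^d)/\mathcal{B}(\comp(C([s,T],\mathcal{SP})))$-measurable. Under the present bounded continuous coefficients, $\mathcal{M}_{s,\delta_y}^1 = \mathcal{M}_{s,\delta_y}$ (cf.\ Subsection \ref{Subsubsection-prob-flow-lin}) and every element is a probability curve, so we may view these sets as compact subsets of $C([s,T],\mathcal{P})$. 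The crucial observation is that, for fixed $s$, the functionals
$$G^{s,\delta_y}_k = G^s_k : \mu \mapsto \int_{\mathbb{R}^d} h_{n_{m^s_k}}\, d\mu_{q_{m^s_k}}$$
are continuous on $C([s,T],\mathcal{P})$ and do not depend on $y$.

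I argue by induction on $k \in \mathbb{N}_0$ that $y \mapsto M^{s,y}_k$ is Borel measurable as a (nonempty) compact-valued multifunction, setting $M^{s,y}_{-1} := \mathcal{M}^1_{s,y}$. Assuming the claim at level $k-1$, standard results on measurable compact-valued multifunctions (collected in Appendix B) together with continuity of $G^s_k$ imply that
$$y \mapsto u^{s,y}_k = \sup_{\mu \in M^{s,y}_{k-1}} G^s_k(\mu)$$
is Borel measurable. Since $G^s_k$ is continuous and $M^{s,y}_{k-1}$ compact, the supremum is attained, and
$$M^{s,y}_k = M^{s,y}_{k-1} \cap (G^s_k)^{-1}\bigl(\{u^{s,y}_k\}\bigr)$$
is the intersection of a measurable compact-valued multifunction with a closed set depending Borel measurably on $y$; invoking once more the machinery of Appendix B gives that $y \mapsto M^{s,y}_k$ is again Borel measurable with nonempty compact values, closing the induction.

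Finally, $y \mapsto \bigcap_{k \geq 0} M^{s,y}_k$ is Borel measurable as a decreasing countable intersection of measurable compact-valued multifunctions in the Polish space $C([s,T],\mathcal{P})$. By the first part of the proof of Theorem \ref{Thm-1} this intersection is the singleton $\{\mu^{s,y}\}$, so the single-valued map $y \mapsto \mu^{s,y}$ is Borel measurable into $C([s,T],\mathcal{P})$. The in-particular assertion follows by composition with the continuous evaluation $C([s,T],\mathcal{P}) \ni \mu \mapsto \mu_t \in \mathcal{P}$.

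The main obstacle is that in the inductive step the closed set $(G^s_k)^{-1}(\{u^{s,y}_k\})$ against which we intersect depends on $y$ through the measurable but not obviously continuous function $u^{s,y}_k$; the measurability of the resulting multifunction therefore does not follow from elementary continuity considerations, but requires the standard theory of measurable multifunctions summarized in Appendix B.
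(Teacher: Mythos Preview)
Your proposal is correct and follows essentially the same route as the paper's proof: both start from the measurability of $y\mapsto \mathcal{M}^1_{s,y}$ via Lemma \ref{1_lem_aux_meas1} and the continuity of $y\mapsto\delta_y$, then propagate measurability through the iterative argmax sets $M^{s,y}_k$ using that the functionals $G^s_k$ are continuous and independent of $y$, and finally pass to the singleton intersection. The only cosmetic differences are that the paper works in $C([s,T],\mathcal{SP})$ and transfers to $\mathcal{P}$ at the end, and that for the inductive step it invokes Lemma \ref{app_gen_meas_selec_lem0} directly (which packages both the measurability of $K\mapsto \sup_K G^s_k$ and of $K\mapsto \{x\in K: G^s_k(x)=\sup_K G^s_k\}$), whereas you unfold this into the two substeps; your ``main obstacle'' paragraph is thus precisely what Lemma \ref{app_gen_meas_selec_lem0} is designed to handle in one stroke.
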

For the proof, we use the notation of the proof of Theorem \ref{Thm-1}.
\begin{proof}
	It suffices to prove Borel measurability of 
	\begin{equation}\label{Eq for mability}
		y \mapsto  \bigcap_{k \geq 0} M_k^{s,y}
	\end{equation} from $\mathbb{R}^d$ to $\comp(C([s,T]\mathcal{SP}))$, since then Lemma \ref{app_gen_meas_selec_lem2} implies the measurability of $y \mapsto \mu^{s,y} \in C([s,T],\mathcal{SP})$, which gives the claim, since $\mu^{s,y} \in C([s,T],\mathcal{P})$ and $\mathcal{B}(\mathcal{SP})$ restricted to $\mathcal{P}$ coincides with the Borel $\sigma$-algebra of $\tau_w$ on $\mathcal{P}$. Since the mapping (\ref{Eq for mability}) can be rewritten as $y \mapsto \underset{N \to \infty}{\text{lim}}X_N(y)$ (the limit is taken in $\comp(C([s,T],\mathcal{SP}))$, see Appendix \ref{App_meas-select}), where
	$$X_N: y \mapsto \bigcap_{0\leq  k \leq N} M_{k}^{s,y} = M_{N}^{s,y} \in \comp(C([s,T],\mathcal{SP})),$$
	it suffices to prove measurability of each $X_N$. Since $y \mapsto \delta_y$ is continuous from $\mathbb{R}^d$ to $(\mathcal{P},\tau_w)$, by definition of $M_{0}^{s,y}$ and Lemmas \ref{1_lem_aux_meas1} and \ref{app_gen_meas_selec_lem0}, $X_0$ is measurable. Iteratively applying Lemma \ref{app_gen_meas_selec_lem0} to the maps $M_{k}^{s,y} \mapsto M_{k+1}^{s,y}$ and using the continuity of each map $G^{s,y}_{k}$, the measurability of $X_N$ follows. The final assertion follows from the measurability of the maps $\pi_t$.
\end{proof}

The following lemma is somewhat converse to the first part of Remark \ref{Rem_stable-convex}.
\begin{lem}\label{Disint for any FPKE-sol}
	Let $(s,\nu) \in [0,T]\times \mathcal{P}$. For $\mu \in \mathcal{M}^1_{s,\nu}$, there is a family $\{\mu^y\}_{y \in \mathbb{R}^d}$ with $\mu^y \in \mathcal{M}^1_{s,y}$ $\nu$-a.s. such that $y \mapsto \mu^y_t$ is $\mathcal{B}(\mathbb{R}^d)/\mathcal{B}(\mathcal{P})$-measurable and $\mu_t = \int_{\mathbb{R}^d}\mu_t^yd\nu(y)$ for each $t\in [s,T]$. 
\end{lem}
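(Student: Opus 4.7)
The plan is to obtain the disintegration by lifting $\mu$ to a probability measure on path space via the superposition principle for linear FPK equations with bounded coefficients (in the sense of \cite{Ambrosio08,Figalli09,Trevisan16} cited in the introduction), disintegrating this path-space measure with respect to its initial time marginal, and then pushing the conditional measures forward by the canonical evaluation maps $\pi_t$ to obtain the desired solutions $\mu^y$.

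Concretely, because $a$ and $b$ are bounded and continuous in $x$, the superposition principle applies to $\mu \in \mathcal{M}^1_{s,\nu}$ and yields a probability measure $P$ on $\Omega := C([s,T],\mathbb{R}^d)$ such that $P\circ \pi_t^{-1} = \mu_t$ for every $t \in [s,T]$ and such that $P$ solves the martingale problem associated with $\mathcal{L}$ with initial distribution $\nu$ at time $s$. Since $P\circ \pi_s^{-1} = \nu$ and $\Omega$ is Polish, there exists a Borel kernel $y \mapsto P^y$ from $\mathbb{R}^d$ to $\mathcal{P}(\Omega)$ with $P^y(\pi_s = y) = 1$ for $\nu$-a.e.\ $y$ and
$$P = \int_{\mathbb{R}^d} P^y\, d\nu(y).$$
I would then verify that $P^y$ solves the martingale problem starting from $\delta_y$ for $\nu$-a.e.\ $y$. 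This rests on the observation that for each $\varphi \in C^2_c(\mathbb{R}^d)$ the process $M_t^\varphi := \varphi(\pi_t) - \varphi(\pi_s) - \int_s^t \mathcal{L}_\tau\varphi(\pi_\tau)\, d\tau$ is a $P$-martingale in the canonical filtration, together with the standard disintegration argument for conditional expectations; the exceptional $\nu$-null set can be chosen independently of $\varphi$ and of the finite collection of times appearing in the martingale identities by separability of $C^2_c(\mathbb{R}^d)$ in a suitable topology and right-continuity of paths.

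Setting $\mu^y_t := P^y \circ \pi_t^{-1}$, the converse direction of the superposition principle (or a direct computation from the martingale identity tested against $\varphi \in C^2_c(\mathbb{R}^d)$) gives $\mu^y \in \mathcal{M}^1_{s,y}$. Borel measurability of $y \mapsto P^y$ and continuity of $\pi_t$ imply that $y \mapsto \mu^y_t$ is $\mathcal{B}(\mathbb{R}^d)/\mathcal{B}(\mathcal{P})$-measurable, and for each $t \in [s,T]$ and $A \in \mathcal{B}(\mathbb{R}^d)$ the disintegration of $P$ gives
$$\mu_t(A) = P(\pi_t \in A) = \int_{\mathbb{R}^d} P^y(\pi_t \in A)\, d\nu(y) = \int_{\mathbb{R}^d} \mu^y_t(A)\, d\nu(y),$$
which is the required identity. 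The main obstacle is the uniform-in-$\varphi$ choice of the $\nu$-null set in the martingale identity for the kernel $P^y$; as indicated, this is handled by countable dense subsets of $C^2_c(\mathbb{R}^d)$ and of $[s,T]$ together with path continuity, and is otherwise routine.
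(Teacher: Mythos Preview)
Your proposal is correct and follows essentially the same route as the paper: lift $\mu$ to a path-space solution $P$ of the martingale problem via the superposition principle, disintegrate $P$ with respect to $P\circ\pi_s^{-1}=\nu$, and take $\mu^y_t:=P^y\circ\pi_t^{-1}$. The only difference is cosmetic: where you sketch the separability argument showing that $P^y$ solves the martingale problem from $\delta_y$ for $\nu$-a.e.\ $y$, the paper simply invokes \cite[Prop.~2.8]{Trevisan16} for this step.
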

\begin{proof}
	By the superposition principle (see \cite{Trevisan16}), for $\mu \in \mathcal{M}^1_{s,\nu}$ there is a probability measure $P = P_\mu \in \mathcal{P}(C([s,T],\mathbb{R}^d))$, which solves the martingale problem with coefficients $a,b$ such that $P \circ \pi_t^{-1} = \mu_t$ for each $t \in [s,T]$. Disintegrating $P$ with respect to $P \circ \pi_s^{-1} = \nu$, we obtain a $\nu-$a.s. unique family $\{P_y\}_{y \in \mathbb{R}^d}$ in $\mathcal{P}(C([s,T],\mathbb{R}^d))$ such that $y \mapsto P_y$ is measurable and $P = \int_{\mathbb{R}^d}P_y d\nu(y)$. Furthermore, $P_y$ is a solution to the associated martingale problem with initial condition $(s,y)$ for $\nu$-a.e. $y \in \mathbb{R}^d$, see \cite[Prop.2.8]{Trevisan16}. Since the curve of one-dimensional time marginals of a solution to the martingale problem with initial condition $(s,\nu)$ is a weakly continuous probability solution to the corresponding FPK equation with initial condition $(s,\nu)$, we have $(P_y \circ \pi_t^{-1})_{t \in [s,T]} \in \mathcal{M}^1_{s,y}$ for $\nu-$a.e. $y \in \mathbb{R}^d$ and $\mu_t = \int_{\mathbb{R}^d}P_y\circ \pi_t^{-1}d\nu(y)$, i.e. the claim follows with $\mu_t^y:= P_y \circ \pi_t^{-1}$.
\end{proof}


\appendix
\section{Auxiliary results and proofs}
\textit{Proof of Lemma \eqref{Lem_cont-wlog}.}
We abbreviate $\mathcal{L}_{t,\tilde{\mu}_t}$ by $\mathcal{L}_t$. Choosing $\varphi(t,x) = f(t)\phi(x)$ with $f \in C_c^{\infty}((s,T)), \phi \in C_c^{\infty}(\mathbb{R}^d)$, \eqref{Sol_time-space-formulation} gives
\begin{equation*}
	\int_{s}^{T}f'(t)\bigg(\int_{\mathbb{R}^d}\phi(x)d\mu_t(x)\bigg) dt = -\int_{s}^{T}f(t)\bigg(\int_{\mathbb{R}^d}\mathcal{L}_t\phi(x)d\mu_t(x)\bigg)dt.
\end{equation*}Therefore, and since $t\mapsto \int\mathcal{L}_t\phi \,d\mu_t \in L^1((s,T);dt)$ by assumption, the map $t \mapsto \int\phi \,d\mu_t$ belongs to the Sobolev space $W^{1,1}((s,T))$ with weak derivative $t \mapsto \int\mathcal{L}_t\phi \,d\mu_t$ $dt$-a.s. Hence, choosing a countable set $\mathcal{F} \subseteq C_c^{\infty}(\mathbb{R}^d)$ dense in $C_c(\mathbb{R}^d)$, there exists a real-valued map $(\phi,t) \mapsto F(\phi,t)$ on $\mathcal{F} \times [s,T]$ such that for each $\phi \in \mathcal{F}$, $t \mapsto F(\phi,t)$ is an absolutely continuous version of $t \mapsto \int \phi\, d\mu_t$. Let $\mathcal{T}$ denote the set of all $t \in [s,T]$ such that $\mu_t \in \mathcal{SP}$ and $F(\phi,t) = \int\phi\, d\mu_t$ for all $\phi \in \mathcal{F}$. By assumption, $\mathcal{T}^c$ has $dt$-measure $0$. If $t \in \mathcal{T}$, then
\begin{equation*}
	|F(\phi,t)-F(\phi',t)| \leq \int_{\mathbb{R}^d}|\phi-\phi'|d\mu_t \leq ||\phi-\phi'||_{\infty},
\end{equation*}whereby $\phi \mapsto F(\phi,t)$ is uniformly continuous on $\mathcal{F}$ and hence uniquely extends to a continuous linear map on all of $C_c(\mathbb{R}^d)$ (again denoted $F(\cdot,t)$).
Here and for the rest of the proof, $(\phi_n)_{n \geq 1} \subseteq \mathcal{F}$ denotes any sequence converging to $\phi$ in $C_c(\mathbb{R}^d)$. Thus, $F(\cdot,t) = \mu_t$ for each $ t \in \mathcal{T}$ as elements in the dual space of $C_c(\mathbb{R}^d)$. For $t \in \mathcal{T}^c$, we have for $\phi \in \mathcal{F}$
\begin{equation*}
	F(\phi,t) = \underset{n \to \infty}{\text{lim}}F(\phi,t_n) = \underset{n \to \infty}{\text{lim}}\int_{\mathbb{R}^d} \phi\, d\mu_{t_n}.
\end{equation*}Here and below, $(t_n)_{n\geq 1} \subseteq (s,T)\cap \mathcal{T}$ is any sequence converging to $t$. In particular, $F(\cdot,t)$ is linear and uniformly continuous on $\mathcal{F}$. For $\phi \in C_c(\mathbb{R}^d)\backslash \mathcal{F}$, we set $F(\phi,t) := \underset{l \to \infty}{\text{lim}}F(\phi_l,t)$ (with $(\phi_l)_{l \geq 1}$ as above), which is well-defined due to the uniform continuity of $\phi \mapsto F(\phi,t)$ on $\mathcal{F}$ and is hence a linear, positive functional on $C_c(\mathbb{R}^d)$ with $||F(\cdot,t)||_{C^*_c(\mathbb{R}^d)} \leq 1$. Therefore, the Riesz-Markov-Kakutani representation theorem implies the existence of a unique element $\mu'_t \in \mathcal{SP}$ such that $F(\cdot,t) = \mu'_t$. For $t \in [s,T]$, define
\begin{equation*}
	\bar{\mu}_t := \begin{cases}
		\mu_t,& t \in \mathcal{T} \\ \mu'_t,& t \in [s,T]\backslash\mathcal{T}.
	\end{cases}
\end{equation*}By definition, $[s,T] \ni t \mapsto \int \phi \,d\bar{\mu}_t$ is continuous for each $\phi \in \mathcal{F}$. Since for each $\phi \in C_c(\mathbb{R}^d)\backslash \mathcal{F}$, we have
\begin{equation*}
	\int \phi \,d\bar{\mu}_t = \underset{l \to \infty}{\text{lim}}\int \phi_l d\tilde{\mu}_t = \underset{l \to \infty}{\text{lim}}\underset{n \to \infty}{\text{lim}}\int \phi_l d\bar{\mu}_{t_n} = \underset{n \to \infty}{\text{lim}}\underset{l \to \infty}{\text{lim}}\int \phi_l d\bar{\mu}_{t_n} = \underset{n \to \infty}{\text{lim}}\int \phi d\bar{\mu}_{t_n},
\end{equation*}it follows that $t \mapsto \bar{\mu}_t$ is vaguely continuous and, by construction, coincides with $(\mu_t)_{t \in (s,T)}$ $dt$-a.s. It is clear that $\bar{\mu}$ satisfies \eqref{Sol_time-space-formulation}-\eqref{Sol_time-space-formulation_IC}. This and the continuity of $\bar{\mu}$ implies all assertions.
\qed
\\

In the remainder of this appendix, we prove Lemma \ref{1_lem_app_precompact screenshots}. We start with the following auxiliary observation.
\begin{lem}\label{1_lem_Ex of Lyapunov fct.}
	For any $\nu \in \mathcal{SP}$, there exists a nonnegative compact function $V=V_{\nu} \in C^2(\mathbb{R}^d)$ such that $\max_{1\leq i,j \leq d}(||\partial_iV||_{\infty},||\partial_{ij}V||_{\infty}) < \infty$ and $\int_{\mathbb{R}^d} Vd\nu < \infty$.
\end{lem}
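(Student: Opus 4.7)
}
The plan is to build $V$ as a composition $V(x) = \varphi(\sqrt{1+|x|^2})$, where $\varphi \in C^2([0,\infty))$ is a slowly growing, nondecreasing function tailored to $\nu$. The use of $u(x):=\sqrt{1+|x|^2}$ rather than $|x|$ is to secure smoothness at the origin, and one easily checks $|\partial_i u|\leq 1$ and $|\partial_{ij}u|\leq 2$, so that any $\varphi$ with bounded first and second derivatives automatically yields $V$ with bounded first and second partials. Compactness of sublevel sets reduces to $\varphi(r)\to\infty$ as $r\to\infty$.

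The heart of the argument is choosing $\varphi$ such that $\int \varphi(|x|)\,d\nu<\infty$. First I would exploit finiteness of $\nu$ to pick an increasing sequence $R_n\uparrow\infty$ with $R_{n+1}\geq R_n+2$ and $\nu(B_{R_n}^c)\leq n^{-3}$ (any summable upper bound will do). Then I fix a smooth nonnegative bump $\chi\in C^\infty_c(\mathbb{R})$ supported in $[0,1]$ with $\int\chi>0$ and bounded derivatives, and set
\[
\psi(s):=\sum_{n=1}^\infty \chi(s-R_n),\qquad \varphi(r):=\int_0^r \psi(s)\,ds.
\]
Since the bumps have disjoint supports, $\psi$, $\psi'$, $\psi''$ are bounded (by $\|\chi\|_\infty$, $\|\chi'\|_\infty$, $\|\chi''\|_\infty$ respectively), giving bounded $\varphi'$, $\varphi''$. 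Moreover $\varphi(r)\to\infty$ because $\varphi(R_{n+1})\geq n\int\chi$.

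To control the integral, I would use the layer-cake identity
\[
\int_{\mathbb{R}^d}\varphi(|x|)\,d\nu(x)=\int_0^\infty \psi(s)\,\nu(\{|x|>s\})\,ds\leq \|\chi\|_\infty\sum_{n=1}^\infty \nu(B_{R_n}^c)\leq \|\chi\|_\infty\sum_{n=1}^\infty n^{-3}<\infty,
\]
where the first inequality uses that $\psi$ vanishes outside $\bigcup_n[R_n,R_n+1]$. Finally, from $\sqrt{1+|x|^2}\leq 1+|x|$ and $\varphi(r+1)-\varphi(r)\leq\|\varphi'\|_\infty$, one gets $V(x)\leq \varphi(|x|)+\|\varphi'\|_\infty$, yielding $\int V\,d\nu<\infty$.

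The only step requiring some care is verifying that the chosen $\varphi$ simultaneously grows unboundedly, has bounded first two derivatives, and is integrable against $\nu$; these three demands pull in opposite directions (bounded $\varphi'$ forces at most linear growth, while integrability for an arbitrary $\nu\in\mathcal{SP}$ may exclude even logarithmic rates). The bump construction above resolves this by letting the growth occur only on the thin shells where $\nu$ has negligible mass, which is the main conceptual point of the proof. Everything else (the $C^2$ regularity and the derivative bounds for $V$) is a routine chain-rule computation from the estimates on $u$.
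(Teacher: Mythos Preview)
Your proof is correct and follows essentially the same strategy as the paper: pick radii $R_n\uparrow\infty$ with $\nu(B_{R_n}^c)\leq n^{-3}$ and build a radial $C^2$ function that climbs by a bounded amount on each shell $[R_n,R_{n+1}]$, so that its first two derivatives stay bounded while the tail integral is controlled by $\sum n^{-3}$. The only differences are cosmetic---the paper patches together explicit $C^2$ interpolations $g_n$ between the constant levels $n$ and $n+1$ and keeps the function constant near the origin, whereas you obtain the same staircase by integrating a sum of disjoint bumps and handle the origin via $\sqrt{1+|x|^2}$; your layer-cake computation plays the role of the paper's inequality $V_\nu\leq W_\nu$.
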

\begin{proof}
	Since every single Borel probability measure on $\mathbb{R}^d$ is tight, there exists a sequence of strictly increasing radii $R_n>0$ such that $\nu\big(\overline{B_{R_n}}^c\big) \leq n^{-3}$. Without loss of generality, we may assume $R_{n+1} \geq R_n+1$. Set $W_{\nu}(x) := 1$ on $\overline{B_{R_1}}$ and $W_{\nu}(x) := n$ on $\overline{B_{R_{n+1}}}\backslash \overline{B_{R_n}}$. Clearly, $W_{\nu}$ is nonnegative, radial and has compact sublevel sets $\{W_{\nu}\leq c \} = \overline{B_{R_{\lfloor c \rfloor+1}}}$ for $c \geq 1$ and $\{W_{\nu} \leq c \} = \emptyset$ for $c < 1$. Furthermore, note that
	$$\int_{\mathbb{R}^d} W_{\nu}d\nu \leq \sum_{n \geq 1} n^{-2} < \infty.$$
	Now consider, for each $n \geq 1$, a function $h_n$, defined via 
	$$h_n(r) :=
	\begin{cases}
		n&, r \in [R_{n+1},R_{n+1}+\frac{1}{4}],\\
		g_n(r)&, r \in (R_{n+1}+\frac{1}{4},R_{n+1}+\frac{3}{4}),\\
		n+1&, r \in [R_{n+1}+\frac{3}{4},R_{n+2}],
	\end{cases}$$for a suitable increasing $C^2$-function $g_n:\mathbb{R}\to \mathbb{R}$ with bounded first- and second-order derivatives such that $h_n \in C^2([R_n,R_{n+1}])$. Note that we used the assumption $R_{n+1} \geq R_n +1$ for the definition of $h_n$. Clearly, the family $\{g_n\}_{n \geq 1}$ can be chosen with uniformly (in $n$) bounded first- and second-order derivatives. Compounding these functions, we note that $V_{\nu}: \mathbb{R}^d \to \mathbb{R}$, $$
	V_{\nu}(x) := \begin{cases}
		1&, x \in \overline{B_{R_2}}, \\
		h_n(|x|)&, x \in \overline{B_{R_{n+2}}}\backslash \overline{B_{R_{n+1}}},n \geq 1 ,
	\end{cases}$$
	is a nonnegative function in $C^2(\mathbb{R}^d)$ such that $V_{\nu}(x) \longrightarrow \infty$ as $|x| \to \infty$ with uniformly bounded first- and second-order partial derivatives and compact sublevel sets, i.e. it is a compact function as in the assertion. Finally, since $V_{\nu} \leq W_{\nu}$ by construction, $\int V_{\nu} d\nu <  \infty$ follows, which completes the proof.
\end{proof}
Now we obtain the following tightness result for solutions.
\begin{lem}\label{1_lem_app_precompact screenshots}
	Consider $\mathcal{B}((0,T))\otimes \tau_v\otimes \mathcal{B}(\mathbb{R}^d)$-measurable coefficients $a_{ij}, b_i: (0,T)\times \mathcal{SP}\times \mathbb{R}^d \to \mathbb{R}$ such that (B2.i) and (B2.ii) of Assumption B2 hold. Then, for each $0 \leq s \leq t \leq T$ and $\nu \in \mathcal{SP}$, $\{\mu_t: \mu \in \mathcal{M}_{s,\nu}\}$ is tight.
\end{lem}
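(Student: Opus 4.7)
The plan is to exploit the compact, nonnegative Lyapunov-type function $V = V_\nu \in C^2(\mathbb{R}^d)$ supplied by Lemma \ref{1_lem_Ex of Lyapunov fct.}, which satisfies $\|\partial_i V\|_\infty, \|\partial_{ij} V\|_\infty < \infty$, has compact sublevel sets, and obeys $\int V\, d\nu < \infty$. Boundedness of the first partial derivatives forces linear growth $V(x) \leq V(0) + \|\nabla V\|_\infty |x|$, which will be essential for controlling cutoff errors. The target is a uniform a priori estimate
\[
\sup_{\mu \in \mathcal{M}_{s,\nu}} \int_{\mathbb{R}^d} V\, d\mu_t \leq C_{\nu,T} < \infty, \qquad t \in [s,T],
\]
from which tightness of $\{\mu_t : \mu \in \mathcal{M}_{s,\nu}\}$ follows by Markov's inequality combined with compactness of $\{V \leq K\}$.

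To derive this bound, I would fix a smooth cutoff $\chi \in C^2_c(\mathbb{R}^d)$ with $0 \leq \chi \leq 1$, $\chi \equiv 1$ on $B_1$ and $\chi \equiv 0$ off $B_2$, and set $\chi_R(x) := \chi(x/R)$ for $R \geq 1$. Since $V\chi_R \in C^2_c(\mathbb{R}^d)$, it is an admissible test function in Definition \ref{Def_NLFPKE-sol}(ii), yielding
\[
\int_{\mathbb{R}^d} V\chi_R\, d\mu_t = \int_{\mathbb{R}^d} V\chi_R\, d\nu + \int_s^t \int_{\mathbb{R}^d} \mathcal{L}_{\tau, \tilde\mu_\tau}(V\chi_R)\, d\mu_\tau\, d\tau.
\]
Expanding the product rule, $\mathcal{L}_{\tau,\tilde\mu_\tau}(V\chi_R)$ splits into a main piece $\chi_R \mathcal{L} V$ (bounded uniformly by Assumption B2(i) and boundedness of $\nabla V, \nabla^2 V$) and cross terms involving $\partial \chi_R$ or $\partial^2 \chi_R$. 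Using the scaling $\|\partial^k \chi_R\|_\infty \leq C_\chi R^{-k}$, the linear growth $V(x) \leq V(0) + 2R\|\nabla V\|_\infty$ on $\supp \nabla \chi_R \subseteq B_{2R}$, and the global bound on $a,b$, each cross term is $O(1)$ in $R$, producing a constant $M = M(\|a\|_\infty, \|b\|_\infty, V(0), \|\nabla V\|_\infty, \|\nabla^2 V\|_\infty, \chi)$ with $|\mathcal{L}_{\tau,\tilde\mu_\tau}(V\chi_R)(x)| \leq M$ uniformly in $R \geq 1$, $\tau, x$, and $\mu \in \mathcal{M}_{s,\nu}$.

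Letting $R \to \infty$, monotone convergence gives $\int V\chi_R\, d\mu_t \uparrow \int V\, d\mu_t$ and $\int V\chi_R\, d\nu \uparrow \int V\, d\nu < \infty$, while the double integral is controlled by $M(t-s) \leq MT$ uniformly in $R$ since $\mu_\tau$ is a subprobability measure. Hence
\[
\int_{\mathbb{R}^d} V\, d\mu_t \leq \int_{\mathbb{R}^d} V\, d\nu + MT =: C_{\nu,T},
\]
independently of $\mu \in \mathcal{M}_{s,\nu}$. Chebyshev then gives $\mu_t(\{V > K\}) \leq C_{\nu,T}/K$ for all such $\mu$, and compactness of the sublevel sets of $V$ yields the claim.

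The only delicate step is the uniform cutoff bound, in particular the term $V a_{ij} \partial_{ij}\chi_R$: naively $V$ grows, but the combination of linear growth with the $R^{-2}$-decay of $\partial^2 \chi_R$ on $\{R \leq |x| \leq 2R\}$ yields an $O(R^{-1})$ contribution, so no blow-up occurs. Once this observation is in place the rest of the argument is a standard Lyapunov-tightness computation.
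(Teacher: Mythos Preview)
Your argument is correct and follows essentially the same route as the paper: both use the Lyapunov function $V=V_\nu$ from Lemma~\ref{1_lem_Ex of Lyapunov fct.}, test the equation against compactly supported truncations of $V$, exploit the global bound (B2.i) together with the uniform control on first and second derivatives, and pass to the limit to obtain a bound on $\sup_{\mu}\int V\,d\mu_t$ independent of the solution. The only cosmetic difference is the truncation: the paper takes an abstract sequence $\varphi_l\in C^2_c(\mathbb{R}^d)$ with $\varphi_l=V$ on $B_l$ and uniformly bounded derivatives (so $|\mathcal{L}\varphi_l|\leq C$ is immediate), whereas you use the explicit product $V\chi_R$ and recover the same uniform bound via the product rule and the linear-growth/scaling cancellation you describe.
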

\begin{proof}
	Fix $(s,\nu) \in [0,T] \times \mathcal{SP}$ and $t \in [s,T]$. Consider a function $V=V_{\nu} \in C^2(\mathbb{R}^d)$ with the properties stated in Lemma \ref{1_lem_Ex of Lyapunov fct.} and let $\{\varphi_l\}_{l \geq 1} \subseteq C^2_c(\mathbb{R}^d)$ have the following properties: $\varphi_l$ is nonnegative, increases pointwise to $V$ as $l \to \infty$ such that $\varphi_l = V$ on $B_l$ and such that $\partial_i \varphi_l, \partial_{ij} \varphi_l$ are bounded uniformly in $1\leq i,j\leq d$ and  $l>1$ by some number $0 <D < \infty$. Then, for any $(\mu_t)_{t \in [s,T]} \in \mathcal{M}_{s,\nu}$, Assumption (B2.i) entails
	\begin{equation*}
		\sup_{l \geq1} \bigg|\int_{s}^{t}\int_{\mathbb{R}^d}a_{ij}(r,\mu_r,x)\partial_{ij}\varphi_l(x)+b_i(r,\mu_r,x)\partial_i\varphi_l(x)d\mu_r(x)dr\bigg| < C,
	\end{equation*}
	with $C=C(D)>0$ independent of the particular solution $(\mu_t)_{t \in [s,T]} \in \mathcal{M}_{s,\nu}$. Hence, by \eqref{Sol-eq_NLFPKE_cont}, we have 
	$$\underset{l \geq 1}{\text{sup}}\bigg|	\int_{\mathbb{R}^d}\varphi_ld\mu_{t}- \int_{\mathbb{R}^d}\varphi_ld\nu \bigg|   < C,$$which, together with $\sup_{l \geq 1}\int \varphi_l d\nu = \int V_{\nu} d\nu < \infty$, entails a uniform in $\mathcal{M}_{s,\nu}$ bound on $\int V_{\nu} d\mu_t = \sup_{l \geq 1}\int \varphi_l \,d\mu_t$. Therefore, $\{\mu_t: \mu \in \mathcal{M}_{s,\nu}\}$ is tight.
\end{proof}

\section{Measurable selections}\label{App_meas-select}
For the convenience of the reader, we recall basic tools for measurable selections as needed in Section \ref{Section-Markov_semigroups}. This material is taken from \cite[Sect.12.1]{StroockVaradh2007}.

Let $(X,d)$ be a separable metric space and denote by $\comp(X)$ the set of nonempty compact subsets of $X$. For $K \in \comp(X)$ and $\varepsilon>0$ let $K_\varepsilon := \{x \in X: \dist(K,x) < \varepsilon\}$. The \textit{Hausdorff distance} $d_H$,
\begin{equation*}
	d_H(K,J) := \inf\{\varepsilon>0: K \subseteq J_\varepsilon\text{ and }J \subseteq K_\varepsilon\},
\end{equation*}
is a metric on $\comp(X)$.
\begin{lem}\label{app_gen_meas_selec_lem0}
	Let $f:X \to \mathbb{R}$ be upper semicontinuous, set $f_K := \sup_{x \in K}f(x)$ for $K \in \comp(X)$ and define $F: \comp(X)\to \comp(X)$ by
	$$F: K \mapsto \{x \in K: f(x) = f_K\}.$$
	The maps $K \mapsto f_K$ and $K \mapsto F(K)$ are Borel maps from $\comp(X)$ to $\mathbb{R}$ and $\comp(X)$, respectively.
\end{lem}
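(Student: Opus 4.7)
The plan is to first establish that $K \mapsto f_K$ is upper semicontinuous on $\comp(X)$ (hence Borel), and then to prove that $K \mapsto F(K)$ is Borel by checking preimages of subbasic open sets of the Hausdorff (equivalently Vietoris) topology on $\comp(X)$, which is generated by the sets $\{J \in \comp(X) : J \subseteq U\}$ and $\{J \in \comp(X) : J \cap V \neq \emptyset\}$ for open $U, V \subseteq X$. Note first that $F(K)$ genuinely lies in $\comp(X)$: upper semicontinuity of $f$ on compact $K$ guarantees that $f_K$ is attained, so $F(K)$ is nonempty, and $F(K) = K \cap \{f \geq f_K\}$ is closed in $K$.

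For $K \mapsto f_K$, consider $K_n \to K$ in $(\comp(X), d_H)$ and pick $x_n \in K_n$ attaining $f_{K_n}$. Since $\dist(x_n, K) \to 0$, there exist $y_n \in K$ with $d(x_n, y_n) \to 0$, and compactness of $K$ yields a subsequence $y_{n_k} \to y \in K$, whence $x_{n_k} \to y$ as well. Upper semicontinuity of $f$ then gives $\limsup_k f_{K_{n_k}} = \limsup_k f(x_{n_k}) \leq f(y) \leq f_K$, proving upper semicontinuity of $K \mapsto f_K$ and, a fortiori, its Borel measurability.

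The key auxiliary step is the following: for any closed set $C \subseteq X$, the map $h_C \colon \comp(X) \to [-\infty, \infty)$, $h_C(K) := \sup_{x \in K \cap C} f(x)$ with the convention $\sup \emptyset := -\infty$, is upper semicontinuous. On the closed subset $\{K : K \cap C \neq \emptyset\}$ of $\comp(X)$ the supremum is attained on the compact set $K \cap C$, and repeating the cluster-point argument above, now using the closedness of $C$ to guarantee that the cluster point lies in $C$ as well, yields upper semicontinuity; on the complement $h_C \equiv -\infty$.

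To finish, observe that for open $U \subseteq X$ one has $F(K) \subseteq U$ if and only if $K \cap U^c \cap \{f \geq f_K\} = \emptyset$, which by compactness of $K \cap U^c$ and upper semicontinuity of $f$ is equivalent to $h_{U^c}(K) < f_K$; thus $\{K : F(K) \subseteq U\}$ is Borel. For open $V \subseteq X$, write $V = \bigcup_n C_n$ with $C_n := \{x : \dist(x, V^c) \geq 1/n\}$ closed; then $F(K) \cap V \neq \emptyset$ if and only if $F(K) \cap C_n \neq \emptyset$ for some $n$, which (again by attainment of the supremum on $K \cap C_n$) is equivalent to $h_{C_n}(K) = f_K$, i.e.\ to $h_{C_n}(K) \geq f_K$. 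Hence $\{K : F(K) \cap V \neq \emptyset\} = \bigcup_n \{K : h_{C_n}(K) \geq f_K\}$ is Borel, completing the proof. The main delicate point is this last case: the supremum of $f$ over the relatively open set $K \cap V$ need not be attained in $V$, so one cannot read off existence of a maximizer from $h_V$ directly, and the inner approximation of $V$ by the closed sets $C_n$ is needed.
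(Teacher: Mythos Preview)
Your proof is correct. The paper does not actually prove this lemma; it merely states it and cites \cite[Sect.~12.1]{StroockVaradh2007} as the source, so there is no proof in the paper to compare against. Your argument is self-contained and sound: the upper semicontinuity of $K\mapsto f_K$ and of the auxiliary maps $h_C$ is established by the standard cluster-point argument, and the reduction to the Vietoris subbase (which generates the Hausdorff topology on $\comp(X)$ for metric $X$) together with the inner approximation $V=\bigcup_n C_n$ handles the delicate ``hit'' case correctly. One minor point worth making explicit is that $\comp(X)$ is second countable (since $X$ is separable metric), so checking Borel preimages on the subbasic sets indeed suffices; and that both $h_C$ and $K\mapsto f_K$ being upper semicontinuous (hence Borel) ensures the comparison sets $\{h_C<f_K\}$ and $\{h_C\ge f_K\}$ are Borel.
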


\begin{lem}\label{app_gen_meas_selec_lem1}
	Let $Y$ be a further metric space and $y \mapsto K_y$ a map from $Y$ to $\comp(X)$. Suppose for any $(y_n)_{n \in \mathbb{N}}$, $y \in Y$ with $y_n \longrightarrow y$ as $n \to \infty$ and for any $x_n \in K_{y_n}$, there exists a limit point $x$ of $(x_n)_{n \in \mathbb{N}}$ such that $x \in K_y$. Then, $y \mapsto K_y$ is Borel measurable from $Y$ to $(\comp(X),d_H)$.
\end{lem}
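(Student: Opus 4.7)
The plan is to exploit the Vietoris subbase description of the Hausdorff metric topology on $\comp(X)$ in order to reduce Borel measurability of $y \mapsto K_y$ to checking that the preimages of two types of subbase sets are Borel in $Y$. Since $(X,d)$ is separable metric, the Borel $\sigma$-algebra on $(\comp(X),d_H)$ is generated by the countable family of sets of the form $\{K \in \comp(X) : K \subseteq U\}$ and $\{K \in \comp(X) : K \cap U \neq \emptyset\}$, with $U$ ranging over a countable base of the topology of $X$. Hence it suffices to prove that the $y$-preimages of these two families are Borel.

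For the first family, I will show the stronger statement that $\{y \in Y : K_y \subseteq U\}$ is open for every open $U \subseteq X$. If this failed at some $y$ with $K_y \subseteq U$, there would exist $y_n \to y$ with $K_{y_n} \not\subseteq U$, and I could choose $x_n \in K_{y_n} \cap U^c$. The hypothesis yields a subsequential limit $x$ of $(x_n)$ with $x \in K_y$; since $U^c$ is closed this forces $x \in U^c$, contradicting $K_y \subseteq U$.

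For the second family, I will first observe that $\{y \in Y : K_y \cap F \neq \emptyset\}$ is closed for every closed $F \subseteq X$: if $y_n \to y$ with $K_{y_n} \cap F \neq \emptyset$, pick $x_n \in K_{y_n} \cap F$, extract a subsequential limit $x \in K_y$ via the hypothesis, and conclude $x \in K_y \cap F$ by closedness of $F$. Writing a given open $U \subseteq X$ as $U = \bigcup_{n \geq 1} F_n$ for the closed sets $F_n := \{x \in X : \dist(x,U^c) \geq 1/n\}$, I obtain
\begin{equation*}
\{y \in Y : K_y \cap U \neq \emptyset\} = \bigcup_{n \geq 1} \{y \in Y : K_y \cap F_n \neq \emptyset\},
\end{equation*}
which is a countable union of closed sets, hence Borel. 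Combining both parts, the preimages of a generating family of the Borel $\sigma$-algebra on $(\comp(X),d_H)$ are Borel in $Y$, which yields the claim.

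The only real technical point is the recognition of the appropriate Vietoris subbase; after that, each of the two verifications is a direct diagonal-sequence argument exploiting that the hypothesis produces a cluster point \emph{inside} $K_y$, together with closedness of $U^c$ and of $F$ to transport the limit point into the relevant set. No use of the full strength of the Hausdorff metric beyond its standard subbase is needed.
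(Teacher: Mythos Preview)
Your proof is correct. The paper does not actually prove this lemma: Appendix~B states explicitly that the material is taken from Stroock--Varadhan, Section~12.1, and only records the statement without argument, so there is no in-paper proof to compare against. Your approach via the Vietoris subbase is the standard one; the hypothesis is precisely upper semicontinuity of the compact-valued map $y\mapsto K_y$, which immediately gives openness of $\{y:K_y\subseteq U\}$, and your $F_\sigma$ decomposition of an open $U$ into closed sets handles the second subbase family cleanly. The only point that could be made slightly more explicit is why checking preimages of the subbase suffices: since $X$ is separable metric, $(\comp(X),d_H)$ is itself separable metric (the Hausdorff and Vietoris topologies agree on $\comp(X)$, and the latter admits a countable subbase built from a countable base of $X$), hence second countable, so the $\sigma$-algebra generated by your subbase already equals the full Borel $\sigma$-algebra.
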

\begin{lem}\label{app_gen_meas_selec_lem2}
	Let $(E,\mathcal{F})$ be a measurable space and $q \mapsto K_q$ a measurable map from $E$ to $(\comp(X),d_H)$. Then, there is a $\mathcal{F}/\mathcal{B}(X)$-measurable map $h: E \to X$ such that $h(q) \in K_q$ for every $q \in E$.
\end{lem}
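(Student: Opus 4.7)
\textit{Proof proposal.} The plan is to carry out a Kuratowski--Ryll-Nardzewski-type construction, building a sequence of measurable maps $h_n : E \to X$ whose values approximate $K_q$ to within $2^{-n}$, and then passing to a pointwise limit $h(q) \in K_q$. Fix a countable dense set $\{x_k\}_{k \in \mathbb{N}} \subseteq X$, which exists by separability. The starting observation is that for each $x \in X$, the map $K \mapsto d(x, K)$ is $1$-Lipschitz from $(\comp(X), d_H)$ to $\mathbb{R}$, i.e.\ $|d(x, K) - d(x, K')| \leq d_H(K, K')$. Composing with the measurable map $q \mapsto K_q$ yields that $q \mapsto d(x, K_q)$ is $\mathcal{F}$-measurable for every $x \in X$.

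Next, I would define $h_n : E \to X$ inductively by
\begin{equation*}
	h_n(q) := x_{k_n(q)}, \qquad k_n(q) := \min \bigl\{ k \in \mathbb{N} : d(x_k, K_q) < 2^{-n} \text{ and } d(x_k, h_{n-1}(q)) < 2^{-n} + 2^{-(n-1)} \bigr\},
\end{equation*}
where for $n = 1$ the second clause is dropped. That the minimum is attained follows from the inductively maintained bound $d(h_{n-1}(q), K_q) < 2^{-(n-1)}$ together with density of $\{x_k\}$: picking $y \in K_q$ within $2^{-(n-1)}$ of $h_{n-1}(q)$ and then $x_k$ within $2^{-n}$ of $y$ satisfies both constraints. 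Measurability of $h_n$ reduces to measurability of each $\{q : k_n(q) = k\}$, which is a Boolean combination of the measurable sets $\{q : d(x_j, K_q) < 2^{-n}\}$ and $\{q : d(x_j, h_{n-1}(q)) < 2^{-n} + 2^{-(n-1)}\}$; the latter uses the inductively established measurability of $h_{n-1}$ and the continuity of the metric.

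Finally, the increment estimate $d(h_n(q), h_{n-1}(q)) < 2^{-n} + 2^{-(n-1)}$ makes $(h_n(q))_n$ a Cauchy sequence in $X$ for every $q$. Since $X$ is not assumed complete, I would extract convergence through $K_q$: pick $y_n(q) \in K_q$ with $d(h_n(q), y_n(q)) < 2^{-n}$; then $(y_n(q))_n$ is Cauchy in the compact set $K_q$ and therefore converges to some $h(q) \in K_q$, whence $h_n(q) \to h(q)$ in $X$ as well. The map $h$ inherits $\mathcal{F}/\mathcal{B}(X)$-measurability as the pointwise limit of measurable maps, and $h(q) \in K_q$ holds by construction.

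The main obstacle I expect is the simultaneous well-definedness and measurability of the indices $k_n(q)$: one needs the density of $\{x_k\}$ paired with the $2^{-(n-1)}$-closeness of $h_{n-1}(q)$ to $K_q$ to ensure that the defining set is non-empty, and the Lipschitz property of $K \mapsto d(x,K)$ recorded above to ensure each resulting level set lies in $\mathcal{F}$. A secondary technical point is the passage to the limit: without completeness of $X$, the Cauchy property of $(h_n(q))_n$ does not by itself produce a limit in $X$, and this must be circumvented by exploiting compactness of $K_q$ as described.
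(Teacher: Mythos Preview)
Your argument is correct. The paper does not give its own proof of this lemma: Appendix~B merely quotes the statement from \cite[Sect.~12.1]{StroockVaradh2007} without proof, so there is nothing to compare against directly. Your construction is the standard Kuratowski--Ryll-Nardzewski-type selection, and the proof in Stroock--Varadhan (Theorem~12.1.10) follows essentially the same scheme of successively refining choices from a countable dense set.

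One minor remark: in the final step you can streamline the use of compactness. Rather than introducing the auxiliary sequence $(y_n(q))$, you can observe directly that compact metric spaces are complete, so the completion $\hat{X}$ of $X$ contains $K_q$ as a closed subset; the Cauchy sequence $(h_n(q))$ then converges in $\hat{X}$, and the limit lies in $K_q \subseteq X$ since $d(h_n(q), K_q) < 2^{-n}$ and $K_q$ is closed. Either route works, and your version correctly addresses the absence of completeness of $X$.
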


\bibliography{thesis}

\end{document}